\DeclareMathOperator*{\forkindep}{\raise0.2ex\hbox{\ooalign{\hidewidth$\vert$\hidewidth\cr\raise-0.9ex\hbox{$\smile$}}}}
\newcommand{\Aut}{\operatorname{Aut}}
\newcommand{\tp}{\operatorname{tp}}
\newcommand{\conv}{\operatorname{conv}}
\DeclareMathAlphabet\mathbfcal{OMS}{cmsy}{b}{n}
\newtheorem*{claim-star}{Claim}
\newtheorem{theorem}{Theorem}[section] 
\newtheorem{lemma}[theorem]{Lemma}
\newtheorem{prop-def}[theorem]{Proposition-Definition}
\newtheorem{corollary}[theorem]{Corollary}
\newtheorem{fact}[theorem]{Fact}
\newtheorem{fact-eh}[theorem]{Fact(?)}
\newtheorem{question}[theorem]{Question}
\newtheorem{proposition}[theorem]{Proposition}
\newtheorem{proposition-eh}[theorem]{Proposition(?)}
\newtheorem*{theorem-star}{Theorem}
\newtheorem*{conjecture-star}{Conjecture}
\newtheorem*{lemma-star}{Lemma}
\theoremstyle{definition}
\newtheorem{definition}[theorem]{Definition}
\newtheorem{example}[theorem]{Example}
\newtheorem{remark}[theorem]{Remark}
\theoremstyle{remark}
\newtheorem*{warning}{Warning}
\newcommand{\fs}{\mathrm{fs}}
\newcommand{\inv}{\mathrm{inv}}
\newcommand{\supp}{\mathrm{supp}}
\newcommand{\Av}{\mathrm{Av}}
\title{A note on Transfer maps and the Morley product in NIP theories}
\author{ Kyle Gannon}
\address{Beijing International Center for Mathematical Research (BICMR)\\
Peking University\\
Beijing, China.}
\email{kgannon@bicmr.pku.edu.cn}
\begin{document}
\maketitle
\begin{abstract}
    In an important (yet unpublished) research note, Ben Yaacov describes how to turn a global Keisler measures into a type over a monster model of the randomization. This transfer methods allow one to turn questions involving measures into those involving types (in continuous logic). Assuming that $T$ is NIP, we show that the Morley product commutes with the transfer map for finitely satisfiable measures. We characterize when the Morley product commutes with the restriction map for pairs of global finitely satisfiable types in the randomization. We end by making some brief observations about the Ellis semigroup in this context. 
\end{abstract}

\section{Introduction \& Preliminaries}

\subsection{Introduction} While many of the properties of types have generalizations to the context of measures (e.g., invariance, definability, finite satisfiability), the \emph{machine that is model theory} interacts with these objects differently. The central difference is that \emph{you can't realize a measure}, in a strong sense. Measures are not realized in some elementary extension the way types can be realized. Granted, a lot can be accomplished in this setting, but some powerful classical tools are missing. One striking difference is the lack of arguments using indiscernible sequences. This is because, in a literal sense, there are none.

The power of the randomization \cite{keisler1999randomizing,ben2009randomizations} is that it provides a framework in which one can \emph{realize a measure}, if you are willing to change your underlying structure to a closely related one, and also learn a smidgen of continuous logic. Instead of working with measures directly, one now works with \emph{random variables} (or \emph{random elements}) taking values in our original structure. This perspective restores access to a lot of model theoretic machinery, including important portions of neostability \cite{ben2010continuous,yaacov2008continuous,ben2009randomizations,andrews2019independence,CGH2,khanaki2022generic}. More importantly, this perspective is practical and we can prove new results about measures (this is often the case with particular kinds of convergence problems -- See \cite[Lemma 2.10]{NIP3} via \cite{yaacov2008continuous}, or our forth coming preprint with Chernikov and Krupi\'{n}ski). 

The Morley product is a fundamental construction in model theory, for both types and measures \cite{NIP3}. When it exists, this product glues measures together in a generic/non-forking way. This product plays an important role in combinatorial results \cite{NIP4,simon2016note}, convolution algebras and dynamical systems (\cite{chernikov2022definable,chernikov2023definable}), as well as certain kinds of probability spaces (forth coming with Hanson). Understanding precisely how the Morley product interacts with the transfer maps gives us access to more tools to better analyze these constructions and contexts. 

The results here are primarily about the connections between measures and random types over NIP structures. We will assume that our underlying theory $T$ is both countable and NIP throughout. For some technical reasons, we focus here on the case of finitely satisfiable measures instead of the more general setting of invariant measures. Even though we make this general assumption, we try our best to specify precisely where we use the NIP and finite satisfiability assumptions. We do the following: 
\begin{enumerate}
    \item \emph{Basic Observations:} We verify some of the claims in Ben Yaacov's note  \cite{yaacov2009transfer}. We also make some basic observations. 
    \item \emph{Going up:} Assuming NIP, we prove that the Morley product commutes with transfer map.
    \item \emph{Going down:} We observe that the restriction map does not commute with the Morley product. We provide a condition on pairs of finitely satisfiable types which ensures commutativity.
    \item \emph{Over a group:} The transfer map is an injective embedding of topological semigroups from the convolution algebra of Keisler measures to types over the randomization (under the \emph{Newelski product}). If our group is definably amenable, then the minimal left ideal in the space of finitely satisfiable types over the randomization is a single point. We had hoped that the transfer map would provide an isomorphism on the level of minimal left ideals in the general NIP setting. However, we show that if our underlying group is not definably amenable, then the minimal left ideal in the Ellis semigroup of random finitely satisfiable types is somehow always more complicated than the minimal left ideal in the convolution algebra.
\end{enumerate}

\subsection{Preliminaries}We assume some familiarity with Keisler measures, randomizations, \emph{basic} continuous logic, etc. Fix a first order theory $T$ in a countable language $\mathcal{L}$. We briefly recall the construction of \emph{easy models} of the randomization. Let  $(\Omega_0, \mathcal{B}_0,\mathbb{P}_0)$ denote an atomless probability algebra with event space $\Omega_0$, $\sigma$-algebra $\mathcal{B}_0$, and probability measure $\mathbb{P}_0$. Then any model $M\models T$ gives rise to a model $M^{(\Omega_0,\mathcal{B}_0,\mathbb{P}_0)}$ of the randomization of $T$, denoted $T^{R}$. We will write $M^{(\Omega_0,\mathcal{B}_0,\mathbb{P}_0)}$ simply as $M^{\Omega}$. We recall its construction. 
First define 
\[ 
M_0^\Omega := \{f\colon \Omega \to M: f \text{ is } \mathcal{B}_0\text{-measurable and has finite image}\}.
\] 
Notice that $M^{\Omega}_0$ has a natural pseudo-metric space given by $d(f,g) = \mathbb{P}_0(\{ t \in \Omega_0 : f(t) \neq g(t)\})$. Then $M^{\Omega}$ is constructed by quotienting by distance $0$ and taking the metric completion on the quotient. Likewise, $\mathcal{B}_0$ is a pseudo-metric space, with distance given by $d(A,B) = \mathbb{P}_0(A \triangle B)$. Again, we identifying measurable sets of up measure $0$ and take the metric completion. Then the randomization $(M^\Omega,\mathcal{B})$ is the two sorted structure in the language $\mathcal{L}^{R}$ which is the disjoint union of the language of probability algebras on one sort and a collection of functions, $\{[\varphi(\bar{x})]: \varphi(\bar{x})$ is an $\mathcal{L}$-formula$\}$, from the random element sort to the probability algebra sort. More explicitly, 
\begin{enumerate}
    \item $\mathcal{B}$ retains the structure of a probability algebra in the language $\{\mathbb{P}, \cap, \cup, ^{c}, \perp, \top\}$. In particular, formulas are extended from $\mathcal{B}_0$ to $\mathcal{B}$ via continuity.   
    \item For every $\mathcal{L}$-formula $\varphi(x_1,...,x_n)$, we have a function $[\varphi(x_1,...,x_n)]:(M_0^{\Omega})^{n} \to \mathcal{B}_0$ via 
    \begin{equation*}
        [\varphi(h_1,...,h_n)] = \{t \in \Omega_0 : M \models \varphi(h_1(t),...,h_n(t))\}. 
    \end{equation*}
    This is extended by continuity to $(M^{\Omega},\mathcal{B})$. 
    \item Putting $(1)$ and $(2)$ together; For any $\mathcal{L}$-formula $\varphi(x_1,...,x_n)$ we have a function $\mathbb{E}[\varphi(x_1,...,x_n)]:(M_0^{\Omega})^{n} \to [0,1]$ via
    \begin{equation*}
    \mathbb{E}[\varphi(h_1,...,h_n)] = \mathbb{P}_0(\{t \in \Omega_0: M \models \varphi(h_1(t),...,h_n(t))\}). 
\end{equation*}
    Again, this is extended by continuity to $(M^{\Omega},\mathcal{B})$. We often only need to work with $\mathcal{L}^{R}$-formulas of the form $\mathbb{E}[\varphi(x_1,...,x_n)]$. 
\end{enumerate}

Formally, one can define the theory of the randomization of $T$, $T^{R}$, as the theory of $(M^{\Omega},\mathcal{B})$ in the language $\mathcal{L}^{R}$ for any choice of model of $T$ and any atomless probability algebra $(\Omega_0,\mathcal{B}_0,\mathbb{P}_0)$ \cite{ben2009randomizations}. By construction, the structure $(M_0^\Omega,\mathcal{B}_0)$ is a metrically dense (pre-)substructure of $(M^{\Omega},\mathcal{B})$. Throughout this paper, we will usually refer to an \emph{easy model} by its \emph{random element sort}, e.g. $\mathbfcal{M}^{\Omega} := (M^{\Omega},\mathcal{B})$, and leave the \emph{probability algebra} sort implicit.

We remark that if $\mathcal{U}$ is a monster model of $T$, then the model $\mathbfcal{U}^{\Omega} = (\mathcal{U}^{\Omega},\mathcal{B})$ is usually not saturated and so we will always think of $\mathbfcal{U}^{\Omega}$ as elementarily embedded in a monster model of $T^{R}$, i.e., $\mathbfcal{U}^{\Omega} \prec \mathbfcal{C} = (\hat{\mathcal{K}}_{\mathbfcal{C}},\hat{\mathcal{B}}_{\mathbfcal{C}})$. As convention, we write $b \in \mathbfcal{C}$ to mean $b \in \hat{\mathcal{K}}_{C}$ as well as $p \in S_{x}(\mathbfcal{C})$ to mean that $p$ is a global type in the \emph{random element} sort, i.e. $p \in S_{\mathcal{K}}(\mathbfcal{C})$. 

If $h \in \mathcal{U}^{\Omega}_0$, then a \emph{partition for h} is a finite $\mathcal{B}_0$-measurable partition of $\Omega_0$, say  $\mathcal{A}$, such that such that for any $A \in \mathcal{A}$, $h|_{A}: A \to \mathcal{U}$ is a constant function. We let $h|_{A}$ denote the unique element in $\mathcal{U}$ such that for any $t \in A$, $h(t) = h|_{A}$. By elementarity, one can interpret the set $\mathcal{A}$ as also a partition of the probability algebra associated to  $\mathbfcal{C}$, i.e. $\mathcal{B}_{C}$. If $b \in \mathcal{U}$, we let $f_{b}: \Omega \to \mathcal{U}$ define the random variable which takes constant value $b$, i.e. for any $t \in \Omega$, $f_b(t) = b$.

Our set up is as follows: Let $M$ be a model of $T$, $\mathcal{U}$ be a monster model of $T$ such that $M \prec \mathcal{U}$. Then we have that $\mathbfcal{M}^{\Omega} \prec \mathbfcal{U}^{\Omega}$ (by quantifier elimination, see Remark \ref{remark:tools}). We now consider a monster model $\mathbfcal{C}$ of $T^{R}$ such that $\mathbfcal{M}^{\Omega} \prec \mathbfcal{U}^{\Omega} \prec \mathbfcal{C}$. We are interested in the connection between global types in the \emph{random element sort} which are finitely satisfiable over the structure $\mathbfcal{M}^{\Omega}$, denoted  $S_{x}^{\fs}(\mathbfcal{C},\mathbfcal{M}^{\Omega})$ and global Keisler measures which are finitely satisfiable over $M$, denoted $\mathfrak{M}_{x}^{\fs}(\mathcal{U},M)$. We let $\mathfrak{M}_{x}(\mathcal{U})$ be the collection of all global Keisler measures over $\mathcal{U}$. 

We recall the maps between these two spaces. Our first map is the restriction map from $S_{x}(\mathbfcal{C})$ to $\mathfrak{M}_{x}(\mathcal{U})$.

\begin{definition} We define the map $\nu_{-}: S_{x}(\mathbfcal{C}) \to \mathfrak{M}_{x}(\mathcal{U})$ where for any $q \in S_{x}(\mathbfcal{C})$, 
\begin{equation*}
\nu_{q}(\varphi(x,b_1,...,b_n)) = (\mathbb{E}[\varphi(x,f_{b_1},...,f_{b_n})])^{q}. 
\end{equation*} 
\end{definition} 

\begin{proposition}\label{prop:cont-v} The map $\nu_{-}: S_{x}(\mathbfcal{C}) \to \mathfrak{M}_{x}(\mathcal{U})$ is continuous. 
\end{proposition}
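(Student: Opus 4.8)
The plan is to unwind the definitions of the two topologies and observe that continuity is immediate once one identifies how the target topology is generated. Recall that $\mathfrak{M}_{x}(\mathcal{U})$ carries the weak topology: it embeds into $[0,1]^{J}$, where $J$ indexes all instances $\varphi(x,\bar{b})$ of $\mathcal{L}$-formulas with parameters $\bar{b} \in \mathcal{U}$, via $\mu \mapsto (\mu(\varphi(x,\bar{b})))_{\varphi,\bar{b}}$. Since the $\mathcal{L}$-definable simple functions are uniformly dense in $C(S_{x}(\mathcal{U}))$, this weak topology is exactly the coarsest one making every evaluation map $e_{\varphi,\bar{b}} \colon \mu \mapsto \mu(\varphi(x,\bar{b}))$ continuous into $[0,1]$. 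Hence, to prove that $\nu_{-}$ is continuous, it suffices to fix a single $\mathcal{L}$-formula $\varphi(x,\bar{y})$ and parameters $\bar{b} = (b_1,\dots,b_n) \in \mathcal{U}$ and check that the composite $q \mapsto \nu_{q}(\varphi(x,\bar{b}))$ is continuous from $S_{x}(\mathbfcal{C})$ to $[0,1]$.

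First I would record the bookkeeping that makes this work. The constant random variables $f_{b_1},\dots,f_{b_n}$ lie in $\mathbfcal{U}^{\Omega}$, and hence in $\mathbfcal{C}$ because $\mathbfcal{U}^{\Omega} \prec \mathbfcal{C}$. Consequently $\mathbb{E}[\varphi(x,f_{b_1},\dots,f_{b_n})]$ is a genuine $\mathcal{L}^{R}$-formula in the single free variable $x$ with parameters in $\mathbfcal{C}$, of exactly the shape $\mathbb{E}[\varphi(\cdots)]$ admitted by the construction of the randomization. Write $\psi_{\varphi,\bar{b}}(x)$ for this formula. By the defining equation $\nu_{q}(\varphi(x,\bar{b})) = (\mathbb{E}[\varphi(x,f_{b_1},\dots,f_{b_n})])^{q}$, the composite map in question is nothing but the evaluation $q \mapsto \psi_{\varphi,\bar{b}}^{\,q}$ of this fixed continuous formula at the type $q$.

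Second, I would invoke the definition of the logic topology on the continuous type space $S_{x}(\mathbfcal{C})$: it is the coarsest topology making $q \mapsto \chi^{q}$ continuous for every $\mathcal{L}^{R}$-formula $\chi(x)$ with parameters in $\mathbfcal{C}$. In particular $q \mapsto \psi_{\varphi,\bar{b}}^{\,q}$ is continuous, i.e. $e_{\varphi,\bar{b}} \circ \nu_{-}$ is continuous for every choice of $\varphi$ and $\bar{b}$. Since the $e_{\varphi,\bar{b}}$ generate the topology on $\mathfrak{M}_{x}(\mathcal{U})$, this yields continuity of $\nu_{-}$.

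There is essentially no hard step here: the one thing requiring care is the identification that each coordinate evaluation $e_{\varphi,\bar{b}}$ pulls back, under $\nu_{-}$, to the evaluation of a \emph{single} $\mathcal{L}^{R}$-formula with parameters in $\mathbfcal{C}$ — which is precisely what the definition of $\nu_{q}$ asserts. Once these identifications are in place, continuity drops out of the definitions of the two logic topologies, and in particular neither the NIP nor the finite-satisfiability hypothesis is needed for this proposition.
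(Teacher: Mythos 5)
Your proof is correct and is essentially the same argument as the paper's: the paper computes the preimage of a basic open set of $\mathfrak{M}_{x}(\mathcal{U})$ directly and observes it is a finite intersection of sets cut out by the $\mathcal{L}^{R}$-formulas $\mathbb{E}[\varphi_i(x,f_{b_i})]$, which is exactly your initial-topology formulation of the same identification $\nu_{q}(\varphi(x,\bar{b})) = (\mathbb{E}[\varphi(x,f_{\bar{b}})])^{q}$. Your phrasing via the universal property of the weak topology, and your explicit remark that neither NIP nor finite satisfiability is used, are fine but do not change the substance.
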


\begin{proof}
Notice that if $O$ is a basic open set in $\mathfrak{M}_{x}(\mathcal{U})$ is of the form
\begin{equation*}
    O=\bigcap_{i=1}^{n} \{\mu \in \mathfrak{M}_{x}(\mathcal{U}):r_{i}<\mu(\varphi_{i}(x,b_{i}))<s_{i}\},
\end{equation*}
where $r_1,...,r_n,s_1,...,s_n$ are real numbers, $b_1,...,b_n$ are parameters from $\mathcal{U}$, and $\varphi_1(x,y),...,\varphi_n(x,y)$ are $\mathcal{L}$-formulas. Then 
\begin{equation*}
    \left(\nu_{-}\right)^{-1}(O) = \bigcap_{i=1}^{n}\{p\in S_{x}(\mathbfcal{C}): r_i<(\mathbb{E}[\varphi_i(x,f_{b_i})])^{p}<s_{i}\},
\end{equation*}
which is clearly open. 
\end{proof}

The following two constructions are essentially from \cite{yaacov2009transfer}, the only caveat being that the map $r_{-}$ has codomain over the monster model. We also mention that the map $r_{-}$ was explicitly studied in \cite{CGH2}. Ben Yaacov calls the following map the \emph{natural extension}. 

\begin{definition} Let $\mu \in \mathfrak{M}_{x}(\mathcal{U})$ and suppose that $\mu$ is definable over $M$. Then there exists a unique type $r_{\mu} \in S^{\inv}_{x}(\mathbfcal{C},\mathbfcal{M}^{\Omega})$ such that for every $b \in \mathbfcal{C}^{y}$, and $\varphi(x,y) \in \mathcal{L}_{xy}$, 
\begin{equation*}
(\mathbb{E}[\varphi(x,b)])^{r_{\mu}} = \lim_{i \in I} \sum_{A \in \mathcal{A}_i} \mathbb{P}_0(A) \mu(\varphi(x,h_i|_{A})).
\end{equation*} 
where $(h_i,\mathcal{A})_{i \in I}$ is any indexed family such that
\begin{enumerate} 
\item For each $i \in I$, $h_i \in (M_{0}^{\Omega})^{y}$. 
\item For each $i \in I$, $\mathcal{A}_i$ is a partition for $h_i$. 
\item $\lim_{i \in I} \tp(h_i/\mathbfcal{M}^{\Omega}) = \tp(b/\mathbfcal{M}^{\Omega})$. 
\end{enumerate} 
\end{definition} 

The following map was also defined in \cite{yaacov2009transfer}. Ben Yaacov refers to it as \emph{extension-by-definition}. This paper will primarily focus on this map. We remark that \cite{yaacov2009transfer} defines the map for all Borel-definable measures. However, we only claim to verify that this map is well-defined under an NIP+finitely satisfiable assumption and so we have changed the definition to fit our context. 

\begin{definition}[T NIP] Let $\mu \in \mathfrak{M}_{x}^{\fs}(\mathcal{U},M)$. Then there exists a unique type $s_{\mu} \in S^{\fs}_{x}(\mathbfcal{C},\mathbfcal{M}^{\Omega})$ such that for every $b \in \mathbfcal{C}^{y}$, and $\varphi(x,z) \in \mathcal{L}_{xz}$, 
\begin{equation*}
(\mathbb{E}[\varphi(x,b)])^{s_{\mu}} = \int_{S_{y}(M)} F_{\mu}^{\varphi} d \nu_{\tp(b/\mathbfcal{C})}. 
\end{equation*} 
where $F_{\mu}^\varphi: S_{y}(M) \to [0,1]$ via $F_{\mu}^{\varphi}(q) = \mu(\varphi(x,e))$ where $e \models q$. We remark that $F_{\mu}^{\varphi}$ is Borel by Remark \ref{remark:tools}. 
\end{definition} 

As stated previously, we assume that the reader has some knowledge of Keisler measures and randomizations, especially when it comes to some of the technical definitions involving properties of measures. Most of these can be found in Simon's text \cite{Guide}. We recall the definition of the Morley product: 

\begin{definition} Suppose that $\mu \in \mathfrak{M}_{x}(\mathcal{U})$ and $\mu$ is Borel-definable over $M$. Let $\nu \in \mathfrak{M}_{y}(\mathcal{U})$. Then the Morley product, denote $\mu \otimes \nu$, is the unique measure in $\mathfrak{M}_{xy}(\mathcal{U})$ such that for any $\mathcal{L}$-formula $\varphi(x,y,z)$ and parameter $e \in \mathcal{U}^{z}$, 
\begin{equation*}
    (\mu \otimes \nu)(\varphi(x,y,e)) = \int_{S_{x}(Me)} F_{\mu}^{\varphi} d\nu. 
\end{equation*}
\end{definition}
The following tools will aid us in our computations: 

\begin{remark}\label{remark:tools}
    \begin{enumerate}
    \item \emph{Equivalence of invariance and Borel-definability in NIP theories} \cite[Corollary 4.9]{NIP2}. Suppose that $T$ is NIP and $\mu \in \mathfrak{M}_{x}(\mathcal{U})$. Then $\mu$ is $M$-invariant if and only if $\mu$ is Borel-definable over $M$, i.e. for any $\mathcal{L}$-formula $\varphi(x,y)$, the map 
    \begin{equation*}
        F_{\mu}^{\varphi}:S_{y}(M) \to [0,1] \text{ via } F_{\mu}^{\varphi}(p) = \mu(\varphi(x,b)),
    \end{equation*}
    where $b \models p$ is well-defined and Borel. 
    \item \emph{Associativity of the Morley product of definable measures in arbitrary theories} \cite[Proposition 2.6]{CG}.
    Suppose that $T$ is arbitrary and $\mu \in \mathfrak{M}_{x}(\mathcal{U})$, $\nu \in \mathfrak{M}_{y}(\mathcal{U})$, and $\lambda \in \mathfrak{M}_{x}(\mathcal{U})$. If $\mu$ and $\nu$ are definable over a small model then $(\mu \otimes (\nu \otimes \lambda)) = ((\mu \otimes \nu) \otimes \lambda)$. 
    \item \emph{Associativity of invariant measures in NIP theories} \cite[Theorem 2.2]{GanCon2}. Suppose that $T$ is NIP and $\mu \in \mathfrak{M}_{x}(\mathcal{U})$, $\nu \in \mathfrak{M}_{y}(\mathcal{U})$, and $\lambda \in \mathfrak{M}_{x}(\mathcal{U})$. If $\mu$ and $\nu$ are invariant over a small model then $(\mu \otimes (\nu \otimes \lambda)) = ((\mu \otimes \nu) \otimes \lambda)$. 
    \item \emph{Left-continuity of the Morley product for invariant measures in NIP theories} (i.e.,\cite[Theorem 6.3]{chernikov2022definable}). Suppose that $T$ is NIP, $\nu \in \mathfrak{M}_{x}(\mathcal{U})$, and $\varphi(x,y) \in \mathcal{L}_{xy}(\mathcal{U})$. Then $- \otimes \nu (\varphi(x,y)) : \mathfrak{M}_{x}^{\fs}(\mathcal{U},M) \to [0,1]$ is continuous. As consequence, the map $-\otimes \nu : \mathfrak{M}_{x}^{\fs}(\mathcal{U},M) \to \mathfrak{M}_{xy}(\mathcal{U})$ is continuous. 
    \item \emph{Right-continuity of the Morley product for definable measures in arbitrary theories} (See e.g., \cite[Lemma 5.4]{CGH}). Suppose that $T$ is arbitrary, $\mu \in \mathfrak{M}_{x}(\mathcal{U})$, is definable $\varphi(x,y) \in \mathcal{L}_{xy}(\mathcal{U})$. Then $\mu \otimes - (\varphi(x,y)) : \mathfrak{M}_{y}(\mathcal{U}) \to [0,1]$ is continuous. 
    \item \emph{Quantifier Elimination in the Randomization} (i.e., \cite{ben2009randomizations}). $T^{R}$ has quantifier elimination. In particular, any type $p \in S_{x}(\mathbfcal{C})$ is completely determined by the values of 
    \begin{equation*}
        \{(\mathbb{E}[\varphi(x,h)])^{p}: \varphi(x,y) \in \mathcal{L}_{xy}(\emptyset), h \in \mathbfcal{C}^{y}\}. 
    \end{equation*} 
\end{enumerate}
\end{remark}

\subsection*{Acknowledgement} We thank Ita\"{i} Ben Yaacov for discussion. We are also deeply indebted to James Hanson for shaping our intuition about the subject.

\section{Basic Observations}

For the rest of the note, we fix $M$, $\mathcal{U}$ models of $T$ such that $M \prec \mathcal{U}$ and $\mathcal{U}$ is a monster model of $T$. We let $(\Omega_0,\mathcal{B}_0,\mathbb{P}_0)$ be a fixed atomless probability space and so $\mathbfcal{M}^{\Omega} \prec \mathbfcal{U}^{\Omega}$ is fixed. Finally, we fix a monster model $\mathbfcal{C}$ of the randomization $\mathbfcal{C} = (\hat{\mathcal{K}}_{\mathbfcal{C}},\hat{\mathcal{B}}_{\mathbfcal{C}})$.
We list the known results connected to transfer properties. The following statements are true. Many appear in either \cite{yaacov2009transfer} or \cite{CGH2}. 
\begin{enumerate}[(a)]
    \item Suppose that $\mu \in \mathfrak{M}_{x}(\mathcal{U})$ and $\mu$ is definable over $M$. Then $r_{\mu} = s_{\mu}$ (observed in \cite{yaacov2009transfer}, without proof). We take the opportunity to provide one. This is true when both $r_{\mu}$ and $s_{\mu}$ are well-defined and does not require NIP. 
    \item The map $\nu_{-}$ preserves invariance, definability, finite satisfiability, and fam. These are quite straightforward and we write them down here. 
    \item The map $r_{-}$ preserves definability, fam, and fim. Definability was proved in \cite{yaacov2009transfer} while fim and fam were proved in \cite{CGH2}. 
    \item (T NIP) The map $s_{-}$ preserves finite satisfiability. This was proved in \cite{yaacov2009transfer} and we give another proof here. By Statements $(a)$ and $(c)$, $s_{-}$ preserves definability and generic stability as well. 
\end{enumerate}
In this section, we give proofs of the following, in the order listed:
\begin{enumerate}
    \item A proof of Statement (a). 
    \item Under NIP, if $\mu \in \mathfrak{M}_{x}^{\fs}(\mathcal{U},M)$, then $s_{\mu}$ is consistent and finitely satisfiable in $\mathbfcal{M}^{\Omega}$. The latter is done in \cite{yaacov2009transfer} and our proof is a different style.
    \item We provide a proof of Statement (b). 
\end{enumerate}

We first prove that the maps $r_{-}$ and $s_{-}$ agree on definable measures. We recall a basic fact. 

\begin{fact}\label{Fact:easy-1} If $h \in \mathcal{U}^{\Omega}_0$ and $\mathcal{A}$ is a partition for $h$, then $\nu_{\tp(h/\mathbfcal{C})} = \sum_{A \in \mathcal{A}} \mathbb{P}_0(A) \delta_{h|_{A}}$.
\end{fact}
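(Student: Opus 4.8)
The plan is to verify the equality of these two global Keisler measures on $\mathcal{U}$ by testing both sides against an arbitrary instance $\varphi(x,\bar b)$, where $\varphi(x,\bar y)$ is an $\mathcal{L}$-formula and $\bar b \in \mathcal{U}^{\bar y}$. Since a measure in $\mathfrak{M}_x(\mathcal{U})$ is determined by its values on such instances, agreement on all of them suffices, and the whole argument is a matter of unwinding the relevant definitions.

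First I would rewrite the left-hand side using the definition of $\nu_{-}$, so that
\[
\nu_{\tp(h/\mathbfcal{C})}(\varphi(x,\bar b)) = (\mathbb{E}[\varphi(x,f_{\bar b})])^{\tp(h/\mathbfcal{C})}.
\]
The key observation is that $\tp(h/\mathbfcal{C})$ is a \emph{realized} type: since $h \in \mathcal{U}^{\Omega}_0 \subseteq \mathbfcal{U}^{\Omega} \prec \mathbfcal{C}$, the element $h$ itself realizes $\tp(h/\mathbfcal{C})$. Hence evaluating the $\mathcal{L}^R$-formula $\mathbb{E}[\varphi(x,f_{\bar b})]$ at this type is the same as evaluating it at $h$, giving
\[
(\mathbb{E}[\varphi(x,f_{\bar b})])^{\tp(h/\mathbfcal{C})} = \mathbb{E}[\varphi(h,f_{\bar b})],
\]
a value already computed inside $\mathbfcal{U}^{\Omega}$ and preserved by elementarity.

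Next I would compute $\mathbb{E}[\varphi(h,f_{\bar b})]$ directly from the construction of the easy model. Because $f_{\bar b}$ is the constant random variable with value $\bar b$, the definition of $\mathbb{E}[\cdot]$ yields
\[
\mathbb{E}[\varphi(h,f_{\bar b})] = \mathbb{P}_0(\{t \in \Omega_0 : \mathcal{U} \models \varphi(h(t),\bar b)\}).
\]
Now I would invoke that $\mathcal{A}$ is a partition for $h$: on each cell $A \in \mathcal{A}$ the map $h$ is the constant $h|_{A}$, so the event $\{t : \mathcal{U} \models \varphi(h(t),\bar b)\}$ is precisely the disjoint union of those cells $A$ for which $\mathcal{U} \models \varphi(h|_{A},\bar b)$. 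Finite additivity of $\mathbb{P}_0$ then gives
\[
\mathbb{P}_0(\{t : \mathcal{U} \models \varphi(h(t),\bar b)\}) = \sum_{\substack{A \in \mathcal{A} \\ \mathcal{U} \models \varphi(h|_{A},\bar b)}} \mathbb{P}_0(A).
\]

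Finally I would identify the right-hand side with the evaluation of $\sum_{A \in \mathcal{A}} \mathbb{P}_0(A)\,\delta_{h|_{A}}$ at $\varphi(x,\bar b)$, noting that $\delta_{h|_{A}}(\varphi(x,\bar b))$ equals $1$ or $0$ according as $\mathcal{U} \models \varphi(h|_{A},\bar b)$ holds or not. As $\varphi$ and $\bar b$ were arbitrary, the two measures coincide. This argument is pure definition-chasing, so there is no genuine obstacle; in particular it uses neither NIP nor finite satisfiability. The only point I would state with care is the identification in the second step of the type value with the honest value $\mathbb{E}[\varphi(h,f_{\bar b})]$ at the realizing element $h$, which rests solely on $\mathbfcal{U}^{\Omega} \prec \mathbfcal{C}$.
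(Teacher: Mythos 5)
Your proposal is correct and follows essentially the same route as the paper's proof: fix an instance $\varphi(x,b)$, unwind the definition of $\nu_{-}$, use that $h$ realizes its own type over $\mathbfcal{C}$ to replace the type evaluation by $\mathbb{E}[\varphi(h,f_{b})]$, compute this probability by splitting $\Omega_0$ along the partition $\mathcal{A}$, and identify the result with the evaluation of $\sum_{A \in \mathcal{A}} \mathbb{P}_0(A)\delta_{h|_{A}}$. The only cosmetic difference is that the paper keeps the sum in the form $\sum_{A}\mathbb{P}_0(\{t \in A : \mathcal{U} \models \varphi(h|_{A},b)\})$ rather than restricting to the cells where $\varphi$ holds, which is the same computation.
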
 

\begin{proof} Fix $\varphi(x,y) \in \mathcal{L}_{xy}$ and $b \in \mathcal{U}^{y}$. Notice, 
\begin{align*} 
\nu_{\tp(h/\mathbfcal{C})}(\varphi(x,b)) &= (\mathbb{E}[\varphi(x,f_{b})])^{\tp(h/\mathbfcal{C})}\\
&= (\mathbb{E}[\varphi(h,f_{b})])\\
&= \mathbb{P}_0\Big(\Big\{ t \in \Omega_0: \mathcal{U} \models \varphi(h(t),f_{b}(t)) \Big\} \Big) \\
&= \sum_{A \in \mathcal{A}} \mathbb{P}_0\Big(\Big\{ t \in A: \mathcal{U} \models \varphi(h|_{A},b)) \Big\} \Big) \\
&= \sum_{A \in \mathcal{A}} \mathbb{P}_0(A)\delta_{h|_{A}}(\varphi(x,b)). \qedhere
\end{align*} 
\end{proof}

\begin{proposition}\label{prop:equiv} Suppose $\mu \in \mathfrak{M}_{x}(\mathcal{U})$ and $\mu$ is definable over $M$. Then $r_{\mu} = s_{\mu}$. 
\end{proposition}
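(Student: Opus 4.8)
The plan is to reduce the statement, via quantifier elimination, to an equality of the two defining expressions, and then to observe that both can be written as integrals of the \emph{same} Borel function $F_\mu^\varphi$ against measures of the form $\nu_{\tp(-/\mathbfcal{C})}$. By Remark \ref{remark:tools}(6) it suffices to check that $(\mathbb{E}[\varphi(x,b)])^{r_\mu} = (\mathbb{E}[\varphi(x,b)])^{s_\mu}$ for every $\mathcal{L}$-formula $\varphi(x,y)$ and every $b \in \mathbfcal{C}^y$. Fix such $\varphi$ and $b$, and fix a family $(h_i,\mathcal{A}_i)_{i\in I}$ as in the definition of $r_\mu$, so that $h_i \in (M_0^\Omega)^y$, $\mathcal{A}_i$ is a partition for $h_i$, and $\lim_i \tp(h_i/\mathbfcal{M}^\Omega) = \tp(b/\mathbfcal{M}^\Omega)$.

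The first step is to rewrite the summand appearing in the definition of $r_\mu$ as an integral. By Fact \ref{Fact:easy-1} we have $\nu_{\tp(h_i/\mathbfcal{C})} = \sum_{A \in \mathcal{A}_i}\mathbb{P}_0(A)\delta_{h_i|_A}$, and pushing this measure forward along the restriction $S_y(\mathcal{U}) \to S_y(M)$ turns the Dirac mass at $h_i|_A$ into the Dirac mass at $\tp(h_i|_A/M)$. Since $F_\mu^\varphi(\tp(h_i|_A/M)) = \mu(\varphi(x,h_i|_A))$, integrating gives
\begin{equation*}
\sum_{A\in\mathcal{A}_i}\mathbb{P}_0(A)\mu(\varphi(x,h_i|_A)) = \int_{S_y(M)} F_\mu^\varphi\, d\nu_{\tp(h_i/\mathbfcal{C})}.
\end{equation*}
Hence $(\mathbb{E}[\varphi(x,b)])^{r_\mu} = \lim_i \int_{S_y(M)} F_\mu^\varphi\, d\nu_{\tp(h_i/\mathbfcal{C})}$, whereas by definition $(\mathbb{E}[\varphi(x,b)])^{s_\mu} = \int_{S_y(M)} F_\mu^\varphi\, d\nu_{\tp(b/\mathbfcal{C})}$. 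So the entire proposition reduces to a single convergence statement: the integrals $\int F_\mu^\varphi\, d\nu_{\tp(h_i/\mathbfcal{C})}$ converge to $\int F_\mu^\varphi\, d\nu_{\tp(b/\mathbfcal{C})}$.

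To establish this I would first show that the pushforwards of $\nu_{\tp(h_i/\mathbfcal{C})}$ to $S_y(M)$ converge weak$^*$ to the pushforward of $\nu_{\tp(b/\mathbfcal{C})}$. Since $S_y(M)$ is a Stone space, it is enough to check convergence on the clopen sets $[\psi(y)]$ with $\psi(y,m)\in\mathcal{L}_y(M)$, where the value is $\nu_{\tp(h_i/\mathbfcal{C})}(\psi(y,m)) = \mathbb{E}[\psi(h_i,f_m)]$. This is the crucial point: because $m \in M$ we have $f_m \in \mathbfcal{M}^\Omega$, so $\mathbb{E}[\psi(-,f_m)]$ is a basic $\mathcal{L}^R$-function over the \emph{small} model $\mathbfcal{M}^\Omega$, and the hypothesis $\tp(h_i/\mathbfcal{M}^\Omega)\to\tp(b/\mathbfcal{M}^\Omega)$ forces $\mathbb{E}[\psi(h_i,f_m)]\to\mathbb{E}[\psi(b,f_m)] = \nu_{\tp(b/\mathbfcal{C})}(\psi(y,m))$. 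Finally, definability of $\mu$ over $M$ makes $F_\mu^\varphi$ continuous on $S_y(M)$; approximating it uniformly by finite real-linear combinations of indicators of clopen sets (Stone--Weierstrass) and using that all measures involved are probability measures, weak$^*$ convergence upgrades to convergence of the integrals. This yields the desired equality, and quantifier elimination then gives $r_\mu = s_\mu$.

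I expect the main obstacle to be exactly the apparent mismatch of parameter sets: $r_\mu$ is defined through convergence of types over the small model $\mathbfcal{M}^\Omega$, whereas $s_\mu$ is written using $\nu_{\tp(b/\mathbfcal{C})}$, which a priori depends on $\tp(b/\mathbfcal{C})$ over the monster. The resolution---and the reason the relevant integral lives over $S_y(M)$ rather than $S_y(\mathcal{U})$---is that after pushing forward to $S_y(M)$ the measure $\nu_{\tp(b/\mathbfcal{C})}$ only ever tests parameters $f_m$ with $m \in M \subseteq \mathbfcal{M}^\Omega$, so convergence over $\mathbfcal{M}^\Omega$ is precisely what is needed. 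It is worth noting that NIP plays no role here: definability of $\mu$ alone guarantees both that $F_\mu^\varphi$ is continuous (hence the $s_\mu$-integral is well defined) and that the limit defining $r_\mu$ exists, matching the claim in Statement (a).
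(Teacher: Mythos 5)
Your proof is correct, and its skeleton is the same as the paper's: reduce via quantifier elimination to atomic $\mathcal{L}^{R}$-formulas, use Fact \ref{Fact:easy-1} to rewrite the defining sum $\sum_{A \in \mathcal{A}_i} \mathbb{P}_0(A)\mu(\varphi(x,h_i|_{A}))$ as $\int_{S_{y}(M)} F_{\mu}^{\varphi}\, d\nu_{\tp(h_i/\mathbfcal{C})}$, and then pass the limit through the integral, which is where definability of $\mu$ (continuity of $F_{\mu}^{\varphi}$) enters. The differences are at the two bookends, and both are in your favor. First, you apply the defining formula of $r_{\mu}$ directly at $b$, whereas the paper first invokes definability of $r_{\mu}$ itself (that $r_{-}$ preserves definability) to reduce evaluation at $b$ to evaluation at the $h_i$; since the definition of $r_{\mu}$ is stated for an arbitrary admissible family $(h_i,\mathcal{A}_i)_{i \in I}$, your shortcut is legitimate and needs one fewer ingredient. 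Second, and more substantively, your handling of the limit step is more careful than the paper's: the paper writes $\lim_i \int F_{\mu}^{\varphi}\, d\nu_{\tp(h_i/\mathbfcal{C})} = \int F_{\mu}^{\varphi}\, d\big(\nu_{\lim_i \tp(h_i/\mathbfcal{C})}\big) = \int F_{\mu}^{\varphi}\, d\nu_{\tp(h/\mathbfcal{C})}$ (its steps (d)--(f)), which as written presupposes that the net of \emph{global} types $\tp(h_i/\mathbfcal{C})$ converges to $\tp(h/\mathbfcal{C})$, while the hypothesis only gives convergence of $\tp(h_i/\mathbfcal{M}^{\Omega})$. Your resolution---push forward to $S_{y}(M)$, observe that the pushforward of $\nu_{q}$ only evaluates $q$ at parameters $f_{m}$ with $m \in M$, so that $f_m \in \mathbfcal{M}^{\Omega}$ and convergence over the small model is exactly what is needed, then upgrade convergence on clopen sets to convergence of integrals of the continuous function $F_{\mu}^{\varphi}$ via Stone--Weierstrass---is precisely the repair that makes those steps rigorous. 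Both arguments rest on the same two essential ingredients (Fact \ref{Fact:easy-1} and continuity coming from definability of $\mu$); yours buys the extra precision of never requiring the global types to converge, and your closing remark that NIP plays no role matches the paper's Statement (a).
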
 

\begin{proof}  Fix $\varphi(x,y) \in \mathcal{L}_{xy}$ and $h \in \mathbfcal{C}$. Fix $(h_i, \mathcal{A}_{i})_{i \in I}$ such that for each $i \in I$ we have $h_i \in (M^{\Omega}_0)^{y}$, $\mathcal{A}_{i}$ is partition for $h_i$, and $\lim_{i \in I} \tp(h_i/\mathbfcal{M}^{\Omega}) = \tp(h/\mathbfcal{M}^{\Omega})$. Consider the following computation:
\begin{align*} 
(\mathbb{E}[\varphi(x,h)])^{r_{\mu}} &= F_{r_{\mu}}^{\mathbb{E}[\varphi]}(\tp(h/\mathbfcal{M}^{\Omega}))\\
&\overset{(a)}{=}\lim_{i \in I} F_{r_{\mu}}^{\mathbb{E}[\varphi]}(\tp(h_i/\mathbfcal{M}^{\Omega}))  \\
&=\lim_{i \in I} (\mathbb{E}[\varphi(x,h_i)])^{r_\mu}\\
&= \lim_{i \in I} \sum_{A \in \mathcal{A}_i} \mathbb{P}_0(A) \mu(\varphi(x,h_i|_{A}))\\
&\overset{(b)}{=} \lim_{i \in I} \int_{S_{y}(M)} F_{\mu}^{\varphi} d \left( \sum_{A \in \mathcal{A}_i} \mathbb{P}_0(A) \delta_{(h_i|_{A})} \right) \\
&\overset{(c)}{=}  \lim_{i \in I} \int_{S_{y}(M)} F_{\mu}^{\varphi} d\left(\nu_{\tp(h_i/\mathbfcal{C})}\right)\\
&\overset{(d)}{=}   \int_{S_{y}(M)} F_{\mu}^{\varphi} d\left(\lim_{i \in I}\nu_{\tp(h_i/\mathbfcal{C})} \right)\\
&\overset{(e)}{=}   \int_{S_{y}(M)} F_{\mu}^{\varphi} d \left(\nu_{\lim_{i \in I}\tp(h_i/\mathbfcal{C})} \right)\\
&\overset{(f)}{=}   \int_{S_{y}(M)} F_{\mu}^{\varphi} d\left(\nu_{\tp(h/\mathbfcal{C})}\right)\\
&= (\mathbb{E}[\varphi(x,h)])^{s_{\mu}}.  
\end{align*} 
By quantifier elimination, the statement holds. We provide the following justifications:
\begin{enumerate}[(a)]
\item Since $r_{\mu}$ is definable, the map $F_{r_{\mu}}^{\mathbb{E}[\varphi]}: S_{y}(\mathbfcal{M}^{\Omega}) \to [0,1]$ is continuous and so commutes with nets. 
\item For fixed $i$, the value of the left-hand-side of the equality is precisely the value of the right-hand-side of the equality. 
\item Follows directly from Fact \ref{Fact:easy-1}. 
\item Since $\mu$ is definable, the map $\int_{S_{y}(M)} F_{\mu}^{\varphi} d- :\mathfrak{M}_{y}(\mathcal{U}) \to [0,1]$ is continuous and thus commutes with nets. 
\item The map $\nu_{-}:S_{y}(\mathbfcal{C}) \to \mathfrak{M}_{y}(\mathcal{U})$ is continuous and so commutes with nets. 
\item Hypothesis for choice of $h_i$'s. \qedhere
\end{enumerate} 
\end{proof} 

In \cite{CGH2}, we (along with Conant and Hanson) verified that if $\mu$ is definable, then the type $r_{\mu}$ is a consistent (\cite[Fact 3.10]{CGH2}). In the following, we use a simpler version of this statement, i.e. that if $a_1,..,a_n$ is a sequence of elements in $M$, then the type measure $r_{Av(\bar{a})}$ is consistent. We provide a quick proof:

\begin{fact}\label{fact:con} Suppose that $a_1,...,a_n$ is a sequence of elements in $M$. Then the type $r_{\Av(\bar{a})}$ is consistent. 
\end{fact}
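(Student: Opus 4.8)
The plan is to compute $r_{\Av(\bar a)}$ explicitly and recognize it as the type of a genuine random element in $\mathbfcal{C}$, which automatically proves consistency. Recall that $\Av(\bar a) = \frac{1}{n}\sum_{j=1}^n \delta_{a_j}$ is a measure concentrated on realized types in $M$, hence trivially definable over $M$. So $r_{\Av(\bar a)}$ is well-defined, and by \Cref{prop:equiv} it coincides with $s_{\Av(\bar a)}$; I am free to work with whichever description is more convenient.

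First I would unwind the defining formula for $r_{\Av(\bar a)}$. Fix an $\mathcal{L}$-formula $\varphi(x,y)$ and $b \in \mathbfcal{C}^y$, and choose an approximating net $(h_i,\mathcal{A}_i)_{i\in I}$ with $h_i \in (M_0^\Omega)^y$ and $\lim_i \tp(h_i/\mathbfcal{M}^\Omega) = \tp(b/\mathbfcal{M}^\Omega)$. Plugging $\Av(\bar a)$ into the definition and using $\Av(\bar a)(\varphi(x,c)) = \frac{1}{n}\#\{j : \mathcal{U}\models\varphi(a_j,c)\}$, the inner sum over $A\in\mathcal{A}_i$ becomes a finite average, and I expect the limit to express $(\mathbb{E}[\varphi(x,b)])^{r_{\Av(\bar a)}}$ as $\frac{1}{n}\sum_{j=1}^n (\mathbb{E}[\varphi(f_{a_j}, b)])$, i.e. the value dictated by the random element $\bar a$ viewed through the constant random variables $f_{a_1},\dots,f_{a_n}$. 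The cleanest way to package this is to exhibit an explicit element of $\mathbfcal{M}^\Omega$ whose type is $r_{\Av(\bar a)}$: partition $\Omega_0$ into $n$ pieces $B_1,\dots,B_n$ of measure $\tfrac1n$ each (possible since the algebra is atomless), and let $g\in M_0^\Omega$ be the simple function with $g|_{B_j} = a_j$. Then by \Cref{Fact:easy-1}, $\nu_{\tp(g/\mathbfcal{C})} = \frac1n\sum_j \delta_{a_j} = \Av(\bar a)$, and I claim $\tp(g/\mathbfcal{C}) = r_{\Av(\bar a)}$.

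To verify the claim I would check the two types agree on every $\mathbb{E}[\varphi(x,b)]$ using quantifier elimination (Remark \ref{remark:tools}(6)). For $b\in\mathbfcal{C}^y$ with approximating net $h_i$, on one side $(\mathbb{E}[\varphi(x,b)])^{\tp(g/\mathbfcal{C})}$ is continuous in $b$ and so equals $\lim_i \mathbb{E}[\varphi(g,h_i)]$; expanding $\mathbb{E}[\varphi(g,h_i)] = \sum_{A\in\mathcal{A}_i}\sum_j \mathbb{P}_0(A\cap B_j)[\mathcal{U}\models\varphi(a_j,h_i|_A)]$ and comparing with the sum $\sum_{A\in\mathcal{A}_i}\mathbb{P}_0(A)\,\Av(\bar a)(\varphi(x,h_i|_A))$ appearing in the definition of $r_{\Av(\bar a)}$, the two match once one notes $\Av(\bar a)(\varphi(x,c)) = \frac1n\sum_j[\mathcal{U}\models\varphi(a_j,c)]$ and $\mathbb{P}_0(A\cap B_j)=\tfrac1n\mathbb{P}_0(A)$ by independence of the partitions (or, more carefully, by refining $\mathcal{A}_i$ to be compatible with the $B_j$). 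Since $g\in\mathbfcal{M}^\Omega\prec\mathbfcal{C}$ is an actual element, its type is consistent, and hence so is $r_{\Av(\bar a)}$.

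The main obstacle is the bookkeeping in the last step: matching the double sum coming from $\mathbb{E}[\varphi(g,h_i)]$ against the single sum in the definition of $r_{\Av(\bar a)}$ requires the weights $\mathbb{P}_0(A\cap B_j)$ to factor as $\tfrac1n\mathbb{P}_0(A)$, which is exactly the statement that the defining net-limit is insensitive to how $g$ distributes the values $a_j$ across $\Omega_0$. This is harmless because $\Av(\bar a)$ only remembers the multiset $\{a_1,\dots,a_n\}$, not any spatial information, but one should either choose the partition $\{B_j\}$ independent from (a cofinal refinement of) the $\mathcal{A}_i$, or simply observe that both sides depend on $b$ only through $\tp(b/\mathbfcal{M}^\Omega)$ and compute directly on the dense set of simple functions $h_i$, where everything is a finite sum. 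Either way the argument is routine once the explicit witness $g$ is in hand, and producing that witness — rather than arguing abstractly about finite satisfiability — is what makes consistency immediate.
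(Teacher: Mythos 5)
Your explicit witness fails, and the failure is the entire point of this fact rather than bookkeeping. Since $g \in M_0^{\Omega} \subseteq \mathbfcal{M}^{\Omega} \prec \mathbfcal{C}$, the element $g$ is itself a parameter of $\mathbfcal{C}$, so the defining scheme of $r_{\Av(\bar{a})}$ must hold at $b = g$. Assume $n \geq 2$ and the $a_j$ distinct, and take $\varphi(x,y) := (x=y)$. Your own (correct) unwinding of the scheme gives
\begin{equation*}
(\mathbb{E}[\varphi(x,g)])^{r_{\Av(\bar{a})}} \;=\; \frac{1}{n}\sum_{j=1}^{n} \mathbb{E}[\varphi(f_{a_j},g)] \;=\; \frac{1}{n}\sum_{j=1}^{n}\mathbb{P}_0(B_j) \;=\; \frac{1}{n},
\end{equation*}
whereas $(\mathbb{E}[\varphi(x,g)])^{\tp(g/\mathbfcal{C})} = \mathbb{E}[\varphi(g,g)] = 1$. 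So $\tp(g/\mathbfcal{C}) \neq r_{\Av(\bar{a})}$; indeed the two types already disagree over $\mathbfcal{M}^{\Omega}$. This is exactly the paper's Warning following Proposition~\ref{prop:basic0}: $\nu_{\tp(g/\mathbfcal{C})} = \Av(\bar{a})$ does \emph{not} imply $\tp(g/\mathbfcal{C}) = s_{\Av(\bar{a})}$, because $\nu_{-}$ is far from injective. The step you call harmless --- factoring $\mathbb{P}_0(A \cap B_j) = \frac{1}{n}\mathbb{P}_0(A)$ --- is where the argument breaks: the scheme defining $r_{\Av(\bar{a})}$ quantifies over \emph{all} $b \in \mathbfcal{C}^{y}$, hence over all approximating nets $(h_i,\mathcal{A}_i)$, and once $g$ is fixed you must handle parameters correlated with $B_1,\dots,B_n$, e.g.\ $b = g$ itself, where $\mathcal{A} = \{B_1,\dots,B_n\}$ and $\mathbb{P}_0(B_k \cap B_j) = \delta_{kj}/n$. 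No choice of the $B_j$ ``independent of the $\mathcal{A}_i$'' can evade this, and in fact no witness anywhere in $\mathbfcal{C}$ can work: a realization of $r_{\Av(\bar{a})}$ must be probabilistically independent of every parameter of $\mathbfcal{C}$, including any would-be realization of itself, which for $n \geq 2$ distinct values is impossible.

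This is precisely why the paper's proof enlarges the probability space instead. By quantifier elimination $\mathbfcal{U}^{\Omega} \prec \mathbfcal{U}^{\Omega \times [0,1)}$, and the witness $h_{\bar{a}}(t,s) = a_j$ for $s \in [\frac{j-1}{n},\frac{j}{n})$ depends only on the \emph{fresh} coordinate $s$, hence is independent of every element of $\mathcal{U}^{\Omega}$ (which are functions of $t$ alone). The paper's computation shows that $q = \tp(h_{\bar{a}}/\mathcal{U}^{\Omega})$ is definable over $\mathcal{U}^{\Omega}$ with defining scheme $\frac{1}{n}\sum_{j} \mathbb{E}[\varphi(f_{a_j},b)]$, and $r_{\Av(\bar{a})}$ is then identified with the unique \emph{definable extension} $\hat{q} \in S_{x}(\mathbfcal{C})$ of $q$, which is consistent simply because it is a type. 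So consistency is obtained by realizing the restriction of $r_{\Av(\bar{a})}$ in a larger randomization and then extending by definability --- not by realizing the global type inside $\mathbfcal{C}$. Your first two paragraphs (the unwinding of the scheme and the measure-$\frac{1}{n}$ partition) are salvageable; the repair is to place that partition on a fresh factor of the measure algebra, as the paper does.
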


\begin{proof} We give a sketch. By quantifier elimination, we have $\mathbfcal{U}^{\Omega} \prec \mathbfcal{U}^{\Omega \times [0,1)}$ (see \cite[Proposition 3.11]{berenstein2021definable} for direct proof). More explicitly, our probability space is $(\Omega_0 \times [0,1), \mathcal{B}_0 \times \mathcal{B}_{[0,1)},\mathbb{P}_{0} \times L)$ where $L$ is the Lebesgue measure and $\mathcal{B}_{[0,1)}$ is the collection of Borel subsets of $[0,1)$. Consider the element $h_{\bar{a}} \in \mathcal{U}^{\mathcal{B} \times [0,1]}$ where
$h_{\bar{a}}(t,s) = a_j$ 
whenever $s \in [\frac{j-1}{n},\frac{j}{n})$. Let $q = \tp(h_{\bar{a}}/\mathcal{U}^{\Omega})$. We claim that $q$ is definable over $\mathcal{U}^{\Omega}$. Indeed, for any $\mathcal{L}$-formula $\varphi(x,y)$ and any $b \in \mathcal{U}_0^{\Omega}$, we have the following computation:  
\begin{align*}
    (\mathbb{E}[\varphi(x,b)])^{q} &= (\mathbb{E}[\varphi(h_{\bar{a}},b)]) \\ &= \mathbb{P}_0 \times L  \left(\left\{ (t,s) \in \Omega_0 \times [0,1) : \mathcal{U} \models \varphi(h_{\bar{a}}(t,s), b(t,s)) \right\} \right) \\
    &=\sum_{i=1}^{n} \mathbb{P}_0 \times L  \left(\left\{ (t,s) \in \Omega_0 \times \left[\frac{i-1}{n},\frac{i}{n} \right) : \mathcal{U} \models \varphi(h_{\bar{a}}(t,s), b(t,s)) \right\} \right) \\
    &=\sum_{i=1}^{n} \mathbb{P}_0 \times L  \left(\left\{ (t,s) \in \Omega_0 \times \left[\frac{i-1}{n},\frac{i}{n} \right) : \mathcal{U} \models \varphi(a_i, b(t)) \right\} \right) \\
    &=\sum_{i=1}^{n} \frac{1}{n} \mathbb{P}_0  \left(\left\{ t \in \Omega_0 : \mathcal{U} \models \varphi(a_i, b(t) \right\} \right) \\
    &=\sum_{i=1}^{n} \frac{1}{n} \mathbb{E}[\varphi(f_a,b)]. 
\end{align*}
Let $\hat{q}$ be the unique definable extension of $q$ to $S_{x}(\mathbfcal{C})$. We claim that $\hat{q} = r_{\Av(\bar{a})}$. 
\end{proof}

\begin{proposition}[T NIP]\label{prop:consistent} For any $\mu \in \mathfrak{M}_{x}^{\fs}(\mathcal{U},M)$, the type $s_{\mu}$ is consistent and finitely satisfiable in $\mathbfcal{M}^{\Omega}$. 
\end{proposition}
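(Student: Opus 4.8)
The plan is to reduce the statement to the case of finitely supported averages $\Av(\bar a)$ with $\bar a$ a tuple from $M$, and then pass to a limit. Recall that $\mu \in \mathfrak{M}_{x}^{\fs}(\mathcal{U},M)$ means precisely that $\mu$ lies in the weak${}^{*}$-closure of $\{\Av(\bar a) : \bar a \in M^{<\omega}\}$ (equivalently, in the closed convex hull of $\{\delta_a : a \in M\}$), so fix a net $(\bar a_i)_{i\in I}$ with $\Av(\bar a_i) \to \mu$. I would then show that $s_{\Av(\bar a_i)} \to s_\mu$ in $S_{x}(\mathbfcal{C})$, that each $s_{\Av(\bar a_i)}$ is a consistent type finitely satisfiable in $\mathbfcal{M}^{\Omega}$, and finally that both ``consistent'' and ``finitely satisfiable in $\mathbfcal{M}^{\Omega}$'' are closed conditions in $S_{x}(\mathbfcal{C})$; the conclusion follows by taking the limit. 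This is not circular: the points $s_{\Av(\bar a_i)}$ are genuine elements of the compact space $S_{x}(\mathbfcal{C})$, and I only need to check that the functional prescribed by the definition of $s_\mu$ is their pointwise limit.

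The continuity input comes from reading the defining integral as a Morley product. Since $T$ is NIP and $\mu$ is finitely satisfiable over $M$, $\mu$ is $M$-invariant, hence Borel-definable over $M$ (Remark \ref{remark:tools}(1)), so $\mu \otimes \nu$ is defined for every $\nu$. For fixed $h \in \mathbfcal{C}^{y}$ and $\varphi(x,y) \in \mathcal{L}_{xy}$ the defining clause reads
\[
(\mathbb{E}[\varphi(x,h)])^{s_\mu} = \int_{S_{y}(M)} F_\mu^\varphi \, d\nu_{\tp(h/\mathbfcal{C})} = (\mu \otimes \nu_{\tp(h/\mathbfcal{C})})(\varphi(x,y)),
\]
where $\nu_{\tp(h/\mathbfcal{C})} \in \mathfrak{M}_{y}(\mathcal{U})$ is held fixed. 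By the left-continuity of the Morley product for finitely satisfiable measures in NIP theories (Remark \ref{remark:tools}(4)), the map $\lambda \mapsto (\lambda \otimes \nu_{\tp(h/\mathbfcal{C})})(\varphi)$ is continuous on $\mathfrak{M}_{x}^{\fs}(\mathcal{U},M)$. Hence $(\mathbb{E}[\varphi(x,h)])^{s_{\Av(\bar a_i)}} \to (\mathbb{E}[\varphi(x,h)])^{s_\mu}$ for every $\varphi$ and $h$, and by quantifier elimination (Remark \ref{remark:tools}(6)) these values determine types and generate the topology, so $s_{\Av(\bar a_i)} \to s_\mu$ in $S_{x}(\mathbfcal{C})$. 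Since $S_{x}(\mathbfcal{C})$ is compact Hausdorff, the limit of genuine types is again a genuine type; this already yields that $s_\mu$ is consistent.

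For the base case, each $\Av(\bar a)$ is definable, so $s_{\Av(\bar a)} = r_{\Av(\bar a)}$ by Proposition \ref{prop:equiv}, and this type is consistent by Fact \ref{fact:con}. It remains to see that $r_{\Av(\bar a)}$ is finitely satisfiable in $\mathbfcal{M}^{\Omega}$. Here I would use that $\Av(\bar a)$ is a finitely supported, hence fim, measure, and that $r_{-}$ preserves fim (Statement (c)); since $T^{R}$ is NIP, the fim type $r_{\Av(\bar a)}$ is generically stable over $\mathbfcal{M}^{\Omega}$, and is therefore finitely satisfiable in $\mathbfcal{M}^{\Omega}$. Finally, the collection of types in $S_{x}(\mathbfcal{C})$ that are finitely satisfiable in $\mathbfcal{M}^{\Omega}$ is closed (being the closure of $\{\tp(g/\mathbfcal{C}) : g \in \mathbfcal{M}^{\Omega}\}$), so the limit $s_\mu$ of the $s_{\Av(\bar a_i)}$ lands in $S^{\fs}_{x}(\mathbfcal{C},\mathbfcal{M}^{\Omega})$, completing the argument.

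I expect the genuine obstacle to be the base-case finite satisfiability of $r_{\Av(\bar a)}$. The natural realization of $r_{\Av(\bar a)}$ by the $M$-valued simple function $h_{\bar a}$ from Fact \ref{fact:con} shows finite satisfiability against parameters drawn from $\mathbfcal{U}^{\Omega}$, because such parameters live over the fixed atomless base algebra $\mathcal{B}_0$ and one can split its cells into $n$ equal pieces to build an $M^{\Omega}_0$-approximant; the difficulty is upgrading this to arbitrary parameters $h$ from the monster $\mathbfcal{C}$, whose associated events $[\varphi(a_i,h)]$ need not be even approximately independent of any small sub-algebra of $\mathbfcal{M}^{\Omega}$. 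Routing through genericity/fim as above sidesteps this entanglement; if one instead wants a self-contained realization argument, the delicate point is precisely this interaction between the fixed base algebra $\mathcal{B}_0$ and monster-level events.
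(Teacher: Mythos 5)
Your proposal is correct and follows essentially the same route as the paper's proof: approximate $\mu$ by a net of averages $\Av(\bar a_j)$ from $M$, use left-continuity of the Morley product in NIP theories (Remark \ref{remark:tools}(4)) to get $s_{\Av(\bar a_j)} \to s_\mu$ on atomic formulas, invoke Proposition \ref{prop:equiv} and Fact \ref{fact:con} for consistency of the base case, and conclude finite satisfiability from generic stability of $r_{\Av(\bar a_j)}$ over $\mathbfcal{M}^{\Omega}$ together with closedness of $S^{\fs}_{x}(\mathbfcal{C},\mathbfcal{M}^{\Omega})$. The only cosmetic difference is that you route the base-case finite satisfiability through ``$r_{-}$ preserves fim'' (Statement (c)) rather than citing generic stability of $r_{\Av(\bar a)}$ directly, but both rest on the same result from \cite{CGH2}.
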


\begin{proof}
 Fix $\mathcal{L}$-formulas $\{\varphi_{i}(x,y_i)\}_{i=1}^{n}$ and $b_1,...,b_n$ from $\mathbfcal{C}$, and $\epsilon > 0$. By quantifier elimination, it suffices to show that there exists a type $q \in S_{x}(\mathbfcal{C})$ such that
\begin{equation*}
    |(\mathbb{E}[\varphi_i(x,b_i)])^{q} - (\mathbb{E}[\varphi_i(x,b_i)])^{s_{\mu}}| < \epsilon. 
\end{equation*}
Since $\mu$ is finitely satisfiable in $M$, there exists a net of measures $(\mu_j)_{j \in J}$ such that $\mu_j = \Av(\bar{a}_{j}) \in \conv(M)$ and $\lim_{j \in J} \mu_{j} = \mu$.  
Notice that for each $i \leq n$, 
\begin{align*}
    (\mathbb{E}[\varphi_i(x,b_i)])^{s_{\mu}} &= \int_{S_y(M)} F_{\mu}^{\varphi_i} d\nu_{\tp(b_i/\mathbfcal{C})} \\ &= (\mu \otimes \nu_{\tp(b_i/\mathbfcal{C})})(\varphi_i(x,y)) \\ & \overset{(*)}{=} \lim_{j \in J} (\Av(\bar{a}_i) \otimes \nu)(\varphi_i(x,y)) \\ &= \lim_{j \in J} \int_{S_y(M)} F_{\Av(\bar{a}_j)}^{\varphi_i} d\nu_{\tp(b/\mathbfcal{C})} \\ 
    &= \lim_{j \in J} (\mathbb{E}[\varphi_i(x,b_i)])^{s_{\Av(\bar{a}_j)}}. 
\end{align*}
Equation $(*)$ holds by Remark \ref{remark:tools}. Since for each $i \leq n$, the map $\mathbb{E}[\varphi_i(x,b_i)]: S_{x}(\mathbfcal{C}) \to [0,1]$ via $p \to (\mathbb{E}[\varphi_i(x,b_i)])^{p}$ is continuous, we can find some $j \in J$ such that for each $i \leq n$, 
\begin{equation*}
    |\mathbb{E}[\varphi_i(x,b_i)]^{s_{\Av(\bar{a}_j)}} - \mathbb{E}[\varphi_i(x,b_i)]^{s_{\mu}}| < \epsilon.  
\end{equation*}
Since $s_{\Av(\bar{a}_j)} = r_{\Av(\bar{a}_j)}$ (by Proposition \ref{prop:equiv}) and $r_{\Av(\bar{a}_j)}$ is a consistent type (by Fact \ref{fact:con}), the type $s_{\mu}$ is consistent. 

Moreover, for each $j \in J$, the type $r_{\Av(\overline{a}_j)}$ is generically stable over $\mathbfcal{M}^{\Omega}$ (see \cite[Corollary 3.19]{CGH2}) and this implies that $r_{\Av(\overline{a}_j)}$ is finitely satisfiable in $\mathbfcal{M}^{\Omega}$. Since the collection of types which are finitely satisfiable in $\mathbfcal{M}^{\Omega}$ is a closed set and $\lim_{j \in J} s_{\Av(\bar{a}_j)} = s_{\mu}$, we conclude that $s_{\mu}$ is finitely satisfiable in $\mathbfcal{M}^{\Omega}$. 
\end{proof}

\begin{proposition}\label{prop:cont-s} If $T$ is NIP, then the map $s_{-}:\mathfrak{M}_{x}^{\fs}(\mathcal{U},M) \to S_{x}^{\fs}(\mathbfcal{C},\mathbfcal{M}^{\Omega})$ is continuous. 
\end{proposition}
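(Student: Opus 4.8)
The plan is to reduce the statement entirely to the left-continuity of the Morley product recorded in Remark \ref{remark:tools}(4). First I would pin down the topology on the target. By quantifier elimination in the randomization (Remark \ref{remark:tools}(6)), the logic topology on $S_{x}(\mathbfcal{C})$ — and hence the subspace topology it induces on $S_{x}^{\fs}(\mathbfcal{C},\mathbfcal{M}^{\Omega})$ — is the coarsest topology making each evaluation map $p \mapsto (\mathbb{E}[\varphi(x,b)])^{p}$ continuous, as $\varphi(x,y)$ ranges over $\mathcal{L}_{xy}$ and $b$ over $\mathbfcal{C}^{y}$. Consequently, to prove that $s_{-}$ is continuous it suffices to fix such a $\varphi$ and $b$ and show that the composite
\[
\mu \longmapsto (\mathbb{E}[\varphi(x,b)])^{s_{\mu}}
\]
is continuous on $\mathfrak{M}_{x}^{\fs}(\mathcal{U},M)$. (That the image really lands in $S_{x}^{\fs}(\mathbfcal{C},\mathbfcal{M}^{\Omega})$ is already guaranteed by Proposition \ref{prop:consistent}, so continuity is the only remaining point.)

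Next I would rewrite this composite as the evaluation of a Morley product. Since $\mu$ is finitely satisfiable over $M$ it is $M$-invariant, hence Borel-definable over $M$ by Remark \ref{remark:tools}(1); therefore the Morley product $\mu \otimes \nu_{\tp(b/\mathbfcal{C})}$ is well-defined. Unwinding the definition of $s_{\mu}$ and of the Morley product — exactly the identification already used inside the proof of Proposition \ref{prop:consistent} — gives
\[
(\mathbb{E}[\varphi(x,b)])^{s_{\mu}} = \int_{S_{y}(M)} F_{\mu}^{\varphi} \, d\nu_{\tp(b/\mathbfcal{C})} = (\mu \otimes \nu_{\tp(b/\mathbfcal{C})})(\varphi(x,y)).
\]
The measure $\nu := \nu_{\tp(b/\mathbfcal{C})} \in \mathfrak{M}_{y}(\mathcal{U})$ is fixed once $b$ is fixed, so the right-hand side is precisely $(- \otimes \nu)(\varphi(x,y))$ evaluated at $\mu$.

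Finally I would invoke Remark \ref{remark:tools}(4): since $T$ is NIP, for this fixed $\nu \in \mathfrak{M}_{y}(\mathcal{U})$ and fixed $\varphi(x,y)$ the map $- \otimes \nu(\varphi(x,y)) : \mathfrak{M}_{x}^{\fs}(\mathcal{U},M) \to [0,1]$ is continuous. Applying this shows that $\mu \mapsto (\mathbb{E}[\varphi(x,b)])^{s_{\mu}}$ is continuous, and since $\varphi$ and $b$ were arbitrary, the first-paragraph reduction yields continuity of $s_{-}$.

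There is essentially no hard calculation in this argument; its entire weight is carried by the cited left-continuity result. The only genuine points requiring care are the topological reduction in the first step — verifying that the subspace topology on the finitely satisfiable random types is indeed generated by the evaluation maps $p \mapsto (\mathbb{E}[\varphi(x,b)])^{p}$, which is where quantifier elimination is used — and the clean transfer of continuity through the Morley-product identity. The main conceptual obstacle, were Remark \ref{remark:tools}(4) not already available, would be proving that $- \otimes \nu$ is continuous in the finitely satisfiable measure argument (this is where NIP is truly needed); but since we may cite it directly, the proof is short.
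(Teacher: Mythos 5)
Your proposal is correct and follows essentially the same route as the paper's proof: both reduce, via quantifier elimination, to checking the evaluation maps $\mu \mapsto (\mathbb{E}[\varphi(x,b)])^{s_{\mu}}$, rewrite this as $(\mu \otimes \nu_{\tp(b/\mathbfcal{C})})(\varphi(x,y))$, and conclude by the NIP left-continuity of the Morley product from Remark \ref{remark:tools}. The only difference is presentational — you phrase the reduction via the initial topology generated by atomic evaluation maps, while the paper runs the same argument with nets.
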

\begin{proof} Similar to the proof above. Let $(\mu_i)_{i \in I}$ be a net of elements in $\mathfrak{M}_{x}^{\fs}(\mathcal{U},M)$ such that $\lim_{i \in I} \mu_i = \mu$. It suffices to prove that $\lim_{i \in I} s_{\mu_i} = s_{\mu}$. By quantifier elimination, it suffices to show these terms agree on all atomic formulas. Let $\varphi(x,y)$ be an $\mathcal{L}$-formula and $b \in \mathbfcal{C}$. Then 
\begin{align*}
    (\mathbb{E}[\varphi(x,b)])^{s_{\mu}} &= \int_{S_{y}(M)} F_{\mu}^{\varphi} d\nu_{\tp(b/\mathbfcal{C})} \\
    &= (\mu \otimes \nu_{\tp(b/\mathbfcal{C})})(\varphi(x,y)) \\
    &= ((\lim_{i \in I} \mu_i) \otimes \nu_{\tp(b/\mathbfcal{C})})(\varphi(x,y))\\
    &\overset{(*)}{=} \lim_{i \in I} (\mu_i \otimes \nu_{\tp(b/\mathbfcal{C})})(\varphi(x,y)) \\
    &=\lim_{i \in I} \int_{S_{x}(M)} F_{\mu_i}^{\varphi} d\nu_{\tp(b/\mathbfcal{C})} \\
    &=\lim_{i \in I} (\mathbb{E}[\varphi(x,b)])^{s_{\mu_i}}
\end{align*}
We remark that Equation $(*)$ follows from Remark \ref{remark:tools}.
\end{proof}

The next proposition is useful in proving that the restriction of a definable type in the randomization yields a definable measure, but seems slightly important in its own right. 

\begin{proposition}\label{prop:cont} Consider the map $f: S_{x}(M) \to S_{x}(\mathbfcal{M}^{\Omega})$ where if $a \models q$, then $q \to \tp(f_{a}/\mathbfcal{M}^{\Omega})$. This map is well-defined and continuous. 
\end{proposition}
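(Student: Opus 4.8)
The plan is to reduce everything to quantifier elimination in the randomization (Remark \ref{remark:tools}(6)): a type in $S_x(\mathbfcal{M}^{\Omega})$ is determined by the values of the atomic predicates $\mathbb{E}[\varphi(x,h)]$ for $\varphi(x,y)\in\mathcal{L}_{xy}(\emptyset)$ and $h\in\mathbfcal{M}^{\Omega}$, and a net of types converges iff all these values converge. So it suffices to understand, for each fixed $\varphi$ and $h$, the real-valued functional $q\mapsto (\mathbb{E}[\varphi(x,h)])^{f(q)}$ on $S_x(M)$. First I would realize $q$ by some $a\models q$ inside $\mathcal{U}$ (possible since $\mathcal{U}$ is a monster extending $M$), so that $f_a\in\mathcal{U}^{\Omega}\prec\mathbfcal{C}$ and $f(q)=\tp(f_a/\mathbfcal{M}^{\Omega})$ is genuinely a type over the small model.

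The key computation is to evaluate this functional when $h$ lies in the metrically dense subset $M_0^{\Omega}$. Choosing a partition $\mathcal{A}$ for $h$, so that $h|_A\in M$ for each $A\in\mathcal{A}$, one gets
\[
(\mathbb{E}[\varphi(x,h)])^{f(q)}=\mathbb{E}[\varphi(f_a,h)]=\sum_{A\in\mathcal{A}}\mathbb{P}_0(A)\cdot\mathbf{1}[\mathcal{U}\models\varphi(a,h|_A)].
\]
Since each parameter $h|_A$ lies in $M$, whether $\varphi(a,h|_A)$ holds depends only on $\tp(a/M)=q$, which already gives well-definedness for parameters in $M_0^{\Omega}$; and each indicator $q\mapsto\mathbf{1}[\varphi(x,h|_A)\in q]$ is the characteristic function of a clopen subset of $S_x(M)$, so the finite sum is a locally constant, hence continuous, function of $q$.

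It then remains to remove the restriction $h\in M_0^{\Omega}$. The tool is the $1$-Lipschitz bound $|(\mathbb{E}[\varphi(x,h)])^{p}-(\mathbb{E}[\varphi(x,h')])^{p}|\le d(h,h')$, which holds on $M_0^{\Omega}$ because the symmetric difference of the two defining events is contained in $\{t:h(t)\ne h'(t)\}$, and which passes to the completion by continuity. Approximating an arbitrary $h\in\mathbfcal{M}^{\Omega}$ by $h'\in M_0^{\Omega}$ then exhibits $q\mapsto(\mathbb{E}[\varphi(x,h)])^{f(q)}$ as a uniform limit of the continuous functions built above; well-definedness (independence of the choice of $a\models q$) likewise follows. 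The hard part will be the uniformity in this final step: one must check that the Lipschitz modulus of $\mathbb{E}[\varphi(x,y)]$ is independent of the type $p$ (equivalently of $a\models q$), so that a single approximation of $h$ controls the functionals simultaneously across all of $S_x(M)$ and the uniform limit of continuous functions is continuous. This is precisely what the event-inclusion bound supplies, and combined with quantifier elimination it yields that $f$ is well-defined and continuous.
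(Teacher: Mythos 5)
Your proposal is correct, and its core continuity computation --- decomposing $\mathbb{E}[\varphi(x,h)]$ along a partition for $h \in M_0^{\Omega}$ into $\sum_{A\in\mathcal{A}}\mathbb{P}_0(A)\,\mathbf{1}[\varphi(x,h|_A)\in q]$, a finite sum of characteristic functions of clopen subsets of $S_x(M)$ --- is exactly the computation in the paper's proof (the paper phrases it as interchanging a net limit with a finite sum rather than as local constancy, but it is the same step). You diverge from the paper in two places. First, for well-definedness the paper uses an automorphism argument: any $\sigma\in\Aut(\mathcal{U}/M)$ sending $a$ to $b$ extends to $\sigma'\in\Aut(\mathbfcal{U}^{\Omega}/\mathbfcal{M}^{\Omega})$ acting by $h\mapsto\sigma\circ h$ (identity on the probability algebra sort), carrying $f_a$ to $f_b$; you instead read well-definedness off the same atomic computation (the displayed value visibly depends only on $q$) together with quantifier elimination, which is equally valid and arguably more self-contained. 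Second, you make explicit the reduction from parameters in all of $\mathbfcal{M}^{\Omega}$ to parameters in the metrically dense set $M_0^{\Omega}$, via the type-independent $1$-Lipschitz bound $|(\mathbb{E}[\varphi(x,h)])^{p}-(\mathbb{E}[\varphi(x,h')])^{p}|\le d(h,h')$ and a uniform-limit-of-continuous-functions argument; the paper silently asserts this reduction (``it suffices to prove \dots for $h\in M_0^{\Omega}$''), so your treatment fills in a step the paper leaves implicit, and your observation that the Lipschitz modulus is uniform across types is precisely what makes that implicit step legitimate. Both routes deliver the same result: the paper's automorphism trick is shorter and is reused elsewhere (e.g., for invariance of $\nu_q$), while your version is more careful about the density and uniformity issues inherent to the metric setting.
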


\begin{proof} Well-definedness follows quickly from an automorphism argument. Let $a,b \models p$ and $\sigma \in \Aut( \mathcal{U}/M)$ map $a \to b$. Then $\sigma$ extends to an automorphism $\sigma' \in \Aut(\mathbfcal{U}^{\Omega}/\mathbfcal{M}^{\Omega})$ which maps $f_{a}$ to $f_{b}$. More explicitly, $\sigma'$ is the identity on the probability algebra sort and for any $h \in \mathcal{U}^{\Omega}_0$, we have that $\sigma'(h) = \sigma \circ h$. $\sigma'$ is extended to $\mathcal{U}^{\Omega}$ via continuity. 

We now argue that $f$ is continuous. Suppose that $\lim_{i \in I} p_i = p$. Let $a_i \models p_i$ and $a \models p$. It suffices to prove that for each formula $\varphi(x,y) \in \mathcal{L}$ and $h \in M^{\Omega}_0$ that $\lim_{i \in I} \mathbb{E}[\varphi(x,h)]^{\tp(f_{a_i}/M)} = \mathbb{E}[\varphi(x,h)]^{\tp(f_{a}/M)}$. Fix a partition $\mathcal{A}$ for $h$. Now notice that 
\begin{align*}
    \lim_{i \in I} \mathbb{E}[\varphi(x,h)]^{\tp(f_{a_i}/\mathbfcal{M}^{\Omega})} &= \lim_{i \in I} \mathbb{E}[\varphi(f_{a_i},h)] \\
    &= \lim_{i \in I}  \mathbb{P}_0(\{t \in \Omega_0: \mathcal{U} \models \varphi(f_{a_i}(t), h(t))\} \\
    &= \lim_{i \in I}  \sum_{A \in \mathcal{A}} \mathbb{P}_0(\{t \in A: \mathcal{U} \models \varphi(a_i, h(t))\} \\
    &= \sum_{A \in \mathcal{A}} \mathbb{P}(A) \lim_{i \in I} \mathbf{1}_{\varphi(x,y)}(a_i,h|_{A}) \\
    &= \sum_{A \in \mathcal{A}} \mathbb{P}(A) \mathbf{1}_{\varphi(x,y)}(a,h|_{A}) \\
    &= \mathbb{P}_0 (\{t \in \Omega_0: \mathcal{U} \models \varphi(f_{a}(t),h(t))\})  \\
    &= (\mathbb{E}{[\varphi(x,h)]})^{\tp(f_a/\mathbfcal{M}^{\Omega})}. \qedhere 
\end{align*}
\end{proof}

\begin{proposition} Suppose $q \in S_{x}(\mathbfcal{C})$. 
\begin{enumerate} 
\item if $q$ is $\mathbfcal{M}^{\Omega}$-invariant, then $\nu_{q}$ is $M$-invariant. 
\item if $q$ is finitely satisfiable in $\mathbfcal{M}^{\Omega}$, then $\nu_{q}$ is finitely satisfiable in $M$. 
\item if $q$ if $\mathbfcal{M}^{\Omega}$-definable, then $\nu_{q}$ is $M$-definable.
\item if $q$ is \emph{fam} over $\mathbfcal{M}^{\Omega}$, then $\nu_{q}$ is \emph{fam} over $M$. 
\end{enumerate} 
\end{proposition}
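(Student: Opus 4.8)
The plan is to establish the four clauses separately, in each case reducing a property of the measure $\nu_q$ to the corresponding property of the type $q$ by evaluating the defining $\mathcal{L}^R$-formulas $\mathbb{E}[\varphi(x,y)]$ at the \emph{constant} random variables $f_b$. The recurring device is that the constant-random-variable assignment $b \mapsto f_b$ transports the $M$-structure on $\mathcal{U}$ to the $\mathbfcal{M}^{\Omega}$-structure on $\mathbfcal{U}^{\Omega}$; this is precisely what the automorphism extension appearing in the proof of Proposition \ref{prop:cont} and the continuous map $f$ of Proposition \ref{prop:cont} record. Throughout I would freely use that $\nu_q(\varphi(x,b)) = (\mathbb{E}[\varphi(x,f_b)])^q = F_q^{\mathbb{E}[\varphi]}(\tp(f_b/\mathbfcal{M}^{\Omega}))$ when $q$ is invariant.

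Clauses (1) and (3) are the cleanest. For invariance, I would take $\sigma \in \Aut(\mathcal{U}/M)$ and extend it, exactly as in the proof of Proposition \ref{prop:cont}, to $\sigma' \in \Aut(\mathbfcal{U}^{\Omega}/\mathbfcal{M}^{\Omega})$ acting by postcomposition, so that $\sigma'(f_b) = f_{\sigma(b)}$; since $\mathbfcal{U}^{\Omega} \prec \mathbfcal{C}$ and $\mathbfcal{C}$ is a monster model, $\sigma'$ extends to some $\hat\sigma \in \Aut(\mathbfcal{C}/\mathbfcal{M}^{\Omega})$. Invariance of $q$ then gives $(\mathbb{E}[\varphi(x,f_b)])^q = (\mathbb{E}[\varphi(x,f_{\sigma(b)})])^q$, i.e. $\nu_q(\varphi(x,b)) = \nu_q(\varphi(x,\sigma(b)))$, which is $M$-invariance of $\nu_q$. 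For definability, definability of $q$ makes $F_q^{\mathbb{E}[\varphi]}\colon S_y(\mathbfcal{M}^{\Omega}) \to [0,1]$ continuous; precomposing with the continuous map $f$ of Proposition \ref{prop:cont} (which sends $\tp(b/M)$ to $\tp(f_b/\mathbfcal{M}^{\Omega})$) exhibits $p \mapsto \nu_q(\varphi(x,b))$, $b\models p$, as a continuous function on $S_y(M)$, which is exactly $M$-definability.

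For clause (2) I would unwind finite satisfiability directly: if $\nu_q(\varphi(x,b)) > 0$ then $(\mathbb{E}[\varphi(x,f_b)])^q > 0$, and finite satisfiability of $q$ (together with metric density of $M_0^{\Omega}$ in $\mathbfcal{M}^{\Omega}$ and uniform continuity of $\mathbb{E}[\varphi]$) yields $h \in M_0^{\Omega}$ with $\mathbb{E}[\varphi(h,f_b)] = \mathbb{P}_0(\{t : \mathcal{U} \models \varphi(h(t),b)\}) > 0$. Positivity forces some fibre $t$ with $\mathcal{U} \models \varphi(h(t),b)$, and $h(t) \in M$ because $h$ has finite image in $M$; hence $\varphi(x,b)$ is realized in $M$, establishing finite satisfiability of $\nu_q$.

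Clause (4), the preservation of fam, is where the real work lies and which I expect to be the main obstacle. Applying fam of $q$ to the formula $\mathbb{E}[\varphi(x,y)]$ produces $h_1,\dots,h_n \in \mathbfcal{M}^{\Omega}$ with $\big|\tfrac{1}{n}\sum_i \mathbb{E}[\varphi(h_i,c)] - (\mathbb{E}[\varphi(x,c)])^q\big| < \epsilon/2$ uniformly in $c$; specializing $c = f_b$ approximates $\nu_q(\varphi(x,b))$ uniformly in $b$. The difficulty is to convert the random-variable witnesses $h_i$ into genuine averages of $M$-point masses: first replace each $h_i$ by a finite-image $h_i' \in M_0^{\Omega}$ within metric distance $\delta$, noting $|\mathbb{E}[\varphi(h_i,f_b)] - \mathbb{E}[\varphi(h_i',f_b)]| \le \delta$ uniformly in $b$, and rewrite each term as $\mathbb{E}[\varphi(h_i',f_b)] = \sum_{A \in \mathcal{A}_i} \mathbb{P}_0(A)\,\mathbf{1}_{\varphi}(h_i'|_A,b)$ via Fact \ref{Fact:easy-1}, so that the approximant is the measure $\tfrac{1}{n}\sum_i \nu_{\tp(h_i'/\mathbfcal{C})}$ supported on finitely many $M$-points; then approximate this weighted average by a uniform one $\tfrac{1}{m}\sum_{j=1}^m \delta_{a_j}$ by rationalizing the weights $\tfrac{1}{n}\mathbb{P}_0(A)$ to a common denominator $m$. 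The only delicate point is bookkeeping the three error sources — the fam approximation, the density perturbation, and the weight-rationalization — so that the total error stays uniform in $b$; once this is tracked, $\nu_q$ is finitely approximated over $M$.
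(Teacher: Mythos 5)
Your proposal is correct and follows essentially the same route as the paper's proof: the automorphism-extension argument for invariance, the fibre-witness extraction for finite satisfiability, composition with the continuous map of Proposition \ref{prop:cont} for definability, and the partition-decomposition-plus-rational-weight-approximation argument for fam. The only differences are cosmetic: you make explicit the density and error-bookkeeping steps (approximating fam witnesses by finite-image elements of $M_0^{\Omega}$) that the paper leaves implicit.
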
 

\begin{proof} We prove the statements: 
\begin{enumerate} 
\item Let $a,b \in \mathcal{U}^{y}$ and suppose that $\tp(a/M) = \tp(b/M)$. Then there exists an automorphism $\sigma: \mathcal{U} \to \mathcal{U}$ which fixes $M$ and maps $a \to b$. Again, we claim that $\sigma':\mathcal{U}_0^{\Omega} \to \mathcal{U}_0^{\Omega}$ via $\sigma'(h) = \sigma \circ h$ is an automorphism of $\mathbfcal{U}^{\Omega}$ fixing $\mathbfcal{M}^{\Omega}$. Now
\begin{align*} 
\nu_{q}(\varphi(x,a)) = (\mathbb{E}[\varphi(x,f_{a})])^{q} &= (\mathbb{E}[\varphi(x,\sigma'(f_a))])^{q} \\ &= (\mathbb{E}[\varphi(x,f_{b})])^{q} = \nu_{q}(\varphi(x,b)). 
\end{align*} 
\item Suppose that $\nu_{q}(\varphi(x,b)) > 0$. Then $(\mathbb{E}[\varphi(x,f_{b})])^{q} > 0$. So, there exists some $h \in M_0^{\Omega}$ such that $(\mathbb{E}[\varphi(h,f_{b})]) > 0$. If $\mathcal{A}$ is a partition for $h$, then  
\begin{align*}
0< (\mathbb{E}[\varphi(h,f_{b})])  &= \mathbb{P}_0 \Big( \Big\{ t \in \Omega_0: \mathcal{U} \models \varphi(h(t_*),b)  \Big\} \Big). \\ &=\sum_{A \in \mathcal{A}}\mathbb{P}_0(A)\mathbf{1}_{\varphi(x,y)}(h|_{A},b) 
\end{align*}  
So there is some $A_* \in \mathcal{A}$ such that $\mathbb{P}_{0}(A_*) > 0$. Consider $h|_{A_*}$. 
\item Fix an $\mathcal{L}$-formula $\varphi(x,y)$. Consider $(p_i)_{i \in I}$ and $p$ in $S_{y}(M)$ such that $\lim_{i \in I} p_i = p$. It suffices to show that $\lim_{i \in I} F_{\nu_{q}}^{\varphi}(p_i) = F_{\nu_{q}}^{\varphi}(p)$.
Fix $(c_i)_{i \in I}$ and $c$ in $\mathcal{U}$ such that $c_i \models p_i$ and $c \models p$. By Fact \ref{prop:cont},
$\lim_{i \in I} \tp(f_{c_i} / \mathbfcal{M}^\Omega) = \tp(f_{c}/\mathbfcal{M}^{\Omega})$. Now notice that 
\begin{align*}
    F_{\nu_{q}}^{\varphi}(q) &= \nu_{q}(\varphi(x,c)) = (\mathbb{E}[\varphi(x,f_{c})])^{q} = F_{q}^{\mathbb{E}[\varphi(x,y)]}(\tp(f_c/\mathbfcal{M}^{\Omega})) \\ &\overset{(*)}{=} \lim_{i \in I} F_{q}^{\mathbb{E}[\varphi(x,y)]}(\tp(f_{c_i}/\mathbfcal{M}^{\Omega})) = \lim_{i \in I} (\mathbb{E}[\varphi(x,f_{c_i})]^{q} \\ &= \lim_{i \in I} \nu(\varphi(x,c_i)) = \lim_{i \in I} F_{\nu_{q}}^{\varphi}(q_i). 
\end{align*}
Where equation $(*)$ follows from definability of $q$. 
\item Fix an $\mathcal{L}$-formula $\varphi(x,y)$ and $\epsilon > 0$. Since $p$ is \emph{fam} over $\mathbfcal{M}^{\Omega}$, there exists $h_1,...,h_n \in M^{\Omega}_0$ such that 
\begin{equation*}
    \sup_{g \in \mathbfcal{C}} |(\mathbb{E}[\varphi(x,g)])^{p} - \frac{1}{n}\sum_{i \leq n} \mathbb{E}[\varphi(h_i,g)] | < \epsilon. 
\end{equation*}
For each $i \leq n$, choose a partition $\mathcal{A}_i$ for $h_i$. Now
\begin{align*}
    \nu_{p}(\varphi(x,b)) &= (\mathbb{E}[\varphi(x,f_b)])^{p}\\
    &\approx_{\epsilon} \frac{1}{n}\sum_{i \leq n} \mathbb{E}[\varphi(h_i,f_{b})] \\
    &= \frac{1}{n} \sum_{i \leq n} \mathbb{P}_0\left( \{t \in \Omega_0: \mathcal{U} \models \varphi(h_i(t),b) \} \right) \\
    &= \frac{1}{n} \sum_{i \leq n} \sum_{A \in \mathcal{A}_i} \mathbb{P}_0(A) \mathbf{1}_{\varphi(x,y)}(h_i|_{A},b) \\
    &= \left( \frac{1}{n}\sum_{i \leq n}\sum_{A \in \mathcal{A}_i} \mathbb{P}_0(A)\delta_{h_i|_{A}} \right)  (\varphi(x,b)). 
\end{align*}
The final term can be approximated by a measure of the form $\Av(\overline{c})$ where each $c_i$ appears in $\bigcup_{i=1}^{n}\{h_i|_{A}: A \in \mathcal{A}_i\}$. Indeed, the only issue is that $\mathbb{P}(A)$ may be irrational and so choosing a small enough rational approximation completes the proof. \qedhere
\end{enumerate} 
\end{proof} 

The following question remains open. 

\begin{question} Let $q \in S^{\fs}_{x}(\mathbfcal{C},\mathbfcal{M}^{\Omega})$. Suppose that $q$ is \emph{generically stable} over $\mathbfcal{M}^{\Omega}$. Does this imply that $\nu_{q}$ is fim? We remark that if the answer is `Yes', this answers an open question from \cite{CGH2}. 
\end{question}

\begin{proposition}\label{prop:basic0} Let $\mu \in \mathfrak{M}^{\fs}_{x}(\mathcal{U},M)$. Then $\nu_{s_{\mu}} = \mu$. 
\end{proposition}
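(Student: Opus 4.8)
The plan is a direct unwinding of the definitions, with the key simplification coming from the fact that the relevant random variable is \emph{constant}. Since $\nu_{s_{\mu}}$ and $\mu$ both live in $\mathfrak{M}_{x}(\mathcal{U})$, it suffices to show they agree on every formula $\varphi(x,c)$ with $\varphi(x,y) \in \mathcal{L}_{xy}$ and $c \in \mathcal{U}^{y}$. So I would fix such $\varphi$ and $c$ and compute $\nu_{s_{\mu}}(\varphi(x,c))$ from the ground up.

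First I would apply the definition of the restriction map $\nu_{-}$: by construction $\nu_{s_{\mu}}(\varphi(x,c)) = (\mathbb{E}[\varphi(x,f_{c})])^{s_{\mu}}$, where $f_{c} \in \mathcal{U}^{\Omega}_{0} \subseteq \mathbfcal{C}^{y}$ is the constant random variable with value $c$. Next I would feed $b = f_{c}$ into the defining equation for $s_{\mu}$, which gives
\begin{equation*}
(\mathbb{E}[\varphi(x,f_{c})])^{s_{\mu}} = \int_{S_{y}(M)} F_{\mu}^{\varphi}\, d\nu_{\tp(f_{c}/\mathbfcal{C})}.
\end{equation*}
The crux is then to identify the measure $\nu_{\tp(f_{c}/\mathbfcal{C})}$. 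Since $f_{c}$ is constant, the trivial partition $\mathcal{A} = \{\Omega_{0}\}$ is a partition for $f_{c}$ with $f_{c}|_{\Omega_{0}} = c$, so Fact \ref{Fact:easy-1} yields $\nu_{\tp(f_{c}/\mathbfcal{C})} = \mathbb{P}_{0}(\Omega_{0})\,\delta_{c} = \delta_{c}$, i.e. the Dirac measure concentrated at the point $\tp(c/M) \in S_{y}(M)$.

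Plugging this in, the integral collapses to evaluation at a point: $\int_{S_{y}(M)} F_{\mu}^{\varphi}\, d\delta_{c} = F_{\mu}^{\varphi}(\tp(c/M)) = \mu(\varphi(x,c))$, the last equality being the definition of $F_{\mu}^{\varphi}$. Hence $\nu_{s_{\mu}}(\varphi(x,c)) = \mu(\varphi(x,c))$ for all such $\varphi$ and $c$, which gives $\nu_{s_{\mu}} = \mu$. I do not expect any genuine obstacle here: the statement is essentially a sanity check that $s_{-}$ is a section of $\nu_{-}$, and the only point requiring a moment's care is the bookkeeping between $f_{c}$ as an element of $\mathbfcal{C}^{y}$ and $c$ as a parameter from $\mathcal{U}$, together with viewing the Keisler measure $\nu_{\tp(f_{c}/\mathbfcal{C})}$ as a regular Borel measure on $S_{y}(M)$ so that the integral against $F_{\mu}^{\varphi}$ makes sense. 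Notably, this argument uses neither NIP nor finite satisfiability beyond what is already needed to guarantee that $s_{\mu}$ exists.
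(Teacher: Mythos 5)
Your proof is correct and takes essentially the same route as the paper's: unwind $\nu_{-}$, feed $f_{c}$ into the defining equation for $s_{\mu}$, identify $\nu_{\tp(f_{c}/\mathbfcal{C})}$ as the Dirac measure at $\tp(c/M)$, and evaluate the integral at that point. The only cosmetic difference is that you justify the identification $\nu_{\tp(f_{c}/\mathbfcal{C})} = \delta_{\tp(c/M)}$ by explicitly invoking Fact \ref{Fact:easy-1} with the trivial partition, a step the paper performs without citation.
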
 

\begin{proof} Fix $\varphi(x,y) \in \mathcal{L}_{xy}$ and $b \in \mathcal{U}^{y}$. Then
\begin{align*} 
\nu_{s_{\mu}}(\varphi(x,b)) &= (\mathbb{E}[\varphi(x,f_{b})])^{s_{\mu}}\\
&=\int_{S_{x}(M)} F_{\mu}^{\varphi} d \left( \nu_{\tp(f_{b}/\mathbfcal{C})} \right)\\
&=\int_{S_{x}(M)} F_{\mu}^{\varphi} d \left( \delta_{\tp(b/M)} \right)\\
&= F_{\mu}^{\varphi}(\tp(b/M))\\
&= \mu(\varphi(x,b)). \qedhere
\end{align*} 
\end{proof} 

\begin{warning} We remark that in general,  $s_{\nu_{q}} \neq q$. In particular, the map $\nu_{-}$ is far from being injective. Fix any $A \subseteq \Omega_0$ such that $\mathbb{P}_0(A) = 1/2$ and $b_1,b_2 \in M$. Consider the random element $g:\Omega \to \mathcal{U}$ where
\begin{equation*}
g(t)=\begin{cases}
\begin{array}{cc}
b_{1} & t\in A,\\
b_{2} & t\in A^{c}.
\end{array}\end{cases}
\end{equation*} 
Notice that $g \in M^{\Omega} \prec \mathbfcal{C}$. For any choice of such $A$, $\nu_{\tp(g/\mathbfcal{C})} = \frac{1}{2}\delta_{b_1} + \frac{1}{2}\delta_{b_2}$. 
\end{warning}

\section{Going up: Extension-by-definition commutes with the Morley product} 
We show that if $T$ is NIP and the global measures $\mu$ and $\nu$ that are finitely satisfiable in $M$, then $s_{\mu} \otimes s_{\nu} = s_{\mu \otimes \nu}$. We remark that the main proof in this section can also be easily repurposed to give another proof of $r_{\mu} \otimes r_{\nu} = r_{\mu \otimes \nu}$ for definable measures in arbitrary theories (see Remark \ref{remark:def-arb}). This comes down to swapping out one justification for another, i.e. the fact that ``the Morley product of invariant measures over NIP theories is associative" replaced with ``the Morley product of definable measures over arbitrary theories is associative". The original proof of the definable case is \cite[Proposition 3.15]{CGH2}, yet the proof presented here is quite different. The next two results are straightforward, but we provide the proofs for clarity. 

\begin{proposition}\label{prop:basic1} Suppose that $b \in \mathbfcal{C}^{x}$ and $a \in \mathcal{U}^{y}$. Then $\nu_{\tp(b/\mathbfcal{C})} \otimes \delta_{a} = \nu_{\tp(b,f_{a}/\mathbfcal{C})}$. 
\end{proposition}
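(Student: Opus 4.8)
The plan is to check that the two global measures in $\mathfrak{M}_{xy}(\mathcal{U})$ agree on every formula $\varphi(x,y,e)$ with $\varphi(x,y,z) \in \mathcal{L}_{xyz}$ and $e \in \mathcal{U}^{z}$; since a Keisler measure over $\mathcal{U}$ is determined by its values on such formulas, this suffices. I expect both sides to collapse to the single scalar $\mathbb{E}[\varphi(b,f_{a},f_{e})]$ computed in $\mathbfcal{C}$, so the whole argument amounts to unwinding two definitions and observing that they land on the same value.

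First I would handle the left-hand side. Because $\delta_{a}$ is the point mass concentrated at $\tp(a/Me)$, the Morley product against it is just substitution of $a$ into the $y$-variable:
\[
(\nu_{\tp(b/\mathbfcal{C})} \otimes \delta_{a})(\varphi(x,y,e)) = \int_{S_{y}(Me)} F_{\nu_{\tp(b/\mathbfcal{C})}}^{\varphi} \, d\delta_{a} = \nu_{\tp(b/\mathbfcal{C})}(\varphi(x,a,e)).
\]
Unwinding the definition of $\nu_{-}$, the parameters $a$ and $e$ become the constant random variables $f_{a}$ and $f_{e}$, so this equals $(\mathbb{E}[\varphi(x,f_{a},f_{e})])^{\tp(b/\mathbfcal{C})} = \mathbb{E}[\varphi(b,f_{a},f_{e})]$.

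Next I would treat the right-hand side. Here $\tp(b,f_{a}/\mathbfcal{C})$ is the type of the pair in the joint sort $xy$, so directly from the definition of $\nu_{-}$,
\[
\nu_{\tp(b,f_{a}/\mathbfcal{C})}(\varphi(x,y,e)) = (\mathbb{E}[\varphi(x,y,f_{e})])^{\tp(b,f_{a}/\mathbfcal{C})} = \mathbb{E}[\varphi(b,f_{a},f_{e})].
\]
Comparing the two displays gives equality on every $\varphi(x,y,e)$, hence the two measures coincide.

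The only point requiring care is the first step: the Morley product as defined requires the left factor to be Borel-definable, and $\nu_{\tp(b/\mathbfcal{C})}$ need not be. This causes no difficulty precisely because the right factor is a Dirac measure: integration against the point mass $\delta_{a}$ raises no measurability concern, and $F_{\nu_{\tp(b/\mathbfcal{C})}}^{\varphi}$ is automatically defined at the single relevant type $\tp(a/Me)$ (using the canonical representative $a$), where it takes the value $\nu_{\tp(b/\mathbfcal{C})}(\varphi(x,a,e))$. Thus $\nu_{\tp(b/\mathbfcal{C})} \otimes \delta_{a}$ is unambiguously given by this substitution, and the computation goes through with no invariance hypothesis on $\nu_{\tp(b/\mathbfcal{C})}$.
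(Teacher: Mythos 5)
Your proof is correct and takes essentially the same route as the paper's: both unwind the Morley product against $\delta_{a}$ as substitution of $a$ into the $y$-variable and then apply the definition of $\nu_{-}$, so that both sides collapse to $\mathbb{E}[\varphi(b,f_{a},f_{e})]$. Your closing paragraph on why no Borel-definability hypothesis on $\nu_{\tp(b/\mathbfcal{C})}$ is needed (integration against a Dirac mass at a realized point is just evaluation) addresses a point the paper leaves implicit, and it is handled correctly.
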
 

\begin{proof} Fix a formula $\varphi(x,y,z) \in \mathcal{L}_{xyz}$ and $d \in \mathcal{U}^{z}$. Then we notice; 
\begin{align*}
\left( \nu_{\tp(b/\mathbfcal{C})} \otimes \delta_{a} \right) (\varphi(x,y,d)) &=
  \nu_{\tp(b/\mathbfcal{C})}(\varphi(x,a,d))\\
&= \mathbb{E}[\varphi(x,f_{a},f_{d})]^{\tp(b/\mathbfcal{C})}\\
&= \mathbb{E}[\varphi(x,y,f_{d})]^{\tp(b,f_{a}/\mathbfcal{C})}\\
&= \nu_{\tp(b,f_{a}/\mathbfcal{C})}(\varphi(x,y,d)). \qedhere
\end{align*} 
\end{proof} 

\begin{proposition}\label{prop:basic2} Suppose that $\lambda \in \mathfrak{M}_{x}(\mathcal{U})$ and $c \models s_{\lambda}|_{\mathbfcal{U}^{\Omega}}$. Then $\nu_{\tp(c/\mathbfcal{C})} = \lambda$.
\end{proposition}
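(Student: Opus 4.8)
The plan is to evaluate $\nu_{\tp(c/\mathbfcal{C})}$ directly on formulas $\varphi(x,b)$ with $b \in \mathcal{U}^y$ and show the value equals $\lambda(\varphi(x,b))$; since an element of $\mathfrak{M}_x(\mathcal{U})$ is determined by such values, this suffices. By definition of $\nu_{-}$ we have
\[
\nu_{\tp(c/\mathbfcal{C})}(\varphi(x,b)) = (\mathbb{E}[\varphi(x,f_b)])^{\tp(c/\mathbfcal{C})}.
\]
(Implicitly $\lambda \in \mathfrak{M}_x^{\fs}(\mathcal{U},M)$ and $T$ is NIP, since otherwise $s_\lambda$ is not defined.)

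The conceptual crux is the following reduction. The parameter $f_b$ is the constant random variable with value $b$, hence lies in $\mathbfcal{U}^{\Omega}$. Therefore the value $(\mathbb{E}[\varphi(x,f_b)])^{\tp(c/\mathbfcal{C})}$ depends only on $\tp(c/\mathbfcal{U}^{\Omega})$, and by hypothesis $\tp(c/\mathbfcal{U}^{\Omega}) = s_{\lambda}|_{\mathbfcal{U}^{\Omega}}$. Since $f_b$ is a parameter from $\mathbfcal{U}^{\Omega}$, this value coincides with the value assigned by the global type $s_{\lambda}$, giving
\[
(\mathbb{E}[\varphi(x,f_b)])^{\tp(c/\mathbfcal{C})} = (\mathbb{E}[\varphi(x,f_b)])^{s_{\lambda}}.
\]
The point to be careful about here is precisely that $c$ realizes only the restriction $s_\lambda|_{\mathbfcal{U}^\Omega}$, not a global type over $\mathbfcal{C}$; the argument works because the measure $\nu_{\tp(c/\mathbfcal{C})}$ is, by construction, only tested against parameters $f_b$ drawn from $\mathcal{U}$, and these already live inside $\mathbfcal{U}^{\Omega}$.

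It then remains to unwind the definition of $s_{\lambda}$, applied with the $\mathbfcal{C}$-parameter $f_b$:
\[
(\mathbb{E}[\varphi(x,f_b)])^{s_{\lambda}} = \int_{S_y(M)} F_{\lambda}^{\varphi} \, d\nu_{\tp(f_b/\mathbfcal{C})}.
\]
A short direct computation, exactly as in the proof of Proposition~\ref{prop:basic0}, shows $\nu_{\tp(f_b/\mathbfcal{C})} = \delta_{b}$, which pushes forward along the restriction $S_y(\mathcal{U}) \to S_y(M)$ to $\delta_{\tp(b/M)}$; hence the integral collapses to $F_{\lambda}^{\varphi}(\tp(b/M)) = \lambda(\varphi(x,b))$, the last equality by well-definedness of $F_\lambda^\varphi$ (Remark~\ref{remark:tools}) applied to the realization $b$ itself. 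Chaining these equalities yields $\nu_{\tp(c/\mathbfcal{C})}(\varphi(x,b)) = \lambda(\varphi(x,b))$ for every $\varphi(x,y) \in \mathcal{L}_{xy}$ and every $b \in \mathcal{U}^{y}$, so $\nu_{\tp(c/\mathbfcal{C})} = \lambda$. The only genuinely non-routine step is the reduction in the second paragraph; the rest is a reprise of the computation already carried out in Proposition~\ref{prop:basic0}.
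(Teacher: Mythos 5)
Your proof is correct and takes essentially the same route as the paper's: the crux in both is that $f_b$ is a parameter from $\mathbfcal{U}^{\Omega}$, so $(\mathbb{E}[\varphi(x,f_b)])^{\tp(c/\mathbfcal{C})}$ is determined by $\tp(c/\mathbfcal{U}^{\Omega}) = s_{\lambda}|_{\mathbfcal{U}^{\Omega}}$ and hence equals $(\mathbb{E}[\varphi(x,f_b)])^{s_{\lambda}}$. Your final computation simply inlines the proof of Proposition~\ref{prop:basic0}, which the paper instead cites directly (and your parenthetical note that $s_{\lambda}$ implicitly requires $\lambda \in \mathfrak{M}_{x}^{\fs}(\mathcal{U},M)$ and NIP is a fair observation about the statement as written).
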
 
\begin{proof}
Fix $\varphi(x,y) \in \mathcal{L}_{xy}$ and $d \in \mathcal{U}^{y}$. Consider 
\begin{align*}
\nu_{\tp(c/\mathbfcal{C})}(\varphi(x,d)) &= (\mathbb{E}[\varphi(x,f_{d})])^{\tp(c/\mathbfcal{C})}\\
&\overset{(a)}{=} (\mathbb{E}[\varphi(x,f_{d})])^{s_{\lambda}}\\
&= \nu_{s_{\lambda}}(\varphi(x,d))\\
&\overset{(b)}{=} \lambda(\varphi(x,d)). 
\end{align*} 
We provide the following justifications: 
\begin{enumerate}[(a)]
\item Since $c \models s_{\mu}|_{\mathbfcal{U}^{\Omega}}$ and $\mathbb{E}[\varphi(x,f_{d})] \in \mathcal{L}_{x}^{R}(\mathcal{U}^{\Omega})$. 
\item Proposition \ref{prop:basic0}. \qedhere
\end{enumerate}
\end{proof}

\begin{theorem}[T NIP]\label{prop:main1} Suppose $\mu \in \mathfrak{M}_{x}^{\fs}(\mathcal{U},M)$ and $\lambda \in \mathfrak{M}_{y}^{\fs}(\mathcal{U},M)$. Then $s_{\mu} \otimes s_{\lambda} = s_{\mu \otimes \lambda}$. 
\end{theorem}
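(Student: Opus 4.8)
The plan is to reduce everything to a single chain of equalities linking the type-level Morley product to the measure-level one, with the bridge being an observation already used implicitly in the proof of \cref{prop:consistent}: the defining equation of $s_\theta$ is literally a Morley product. Indeed, for any $\theta \in \mathfrak{M}_{w}^{\fs}(\mathcal{U},M)$, any $\mathcal{L}$-formula $\chi(w,z)$ and any parameter $c$, the definition of $s_\theta$ together with the definition of the Morley product give $(\mathbb{E}[\chi(w,c)])^{s_\theta} = (\theta \otimes \nu_{\tp(c/\mathbfcal{C})})(\chi(w,z))$, because $\int_{S(M)} F_{\theta}^{\chi}\, d\nu_{\tp(c/\mathbfcal{C})}$ is by definition $(\theta \otimes \nu_{\tp(c/\mathbfcal{C})})(\chi)$. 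By quantifier elimination (\cref{remark:tools}(6)) it suffices to show that $s_\mu \otimes s_\lambda$ and $s_{\mu\otimes\lambda}$ agree on every $\mathbb{E}[\varphi(x,y,b)]$ with $\varphi(x,y,z)\in\mathcal{L}_{xyz}$ and $b\in\mathbfcal{C}$. Note that $s_{\mu\otimes\lambda}$ is well-defined since $\mu\otimes\lambda$ is again finitely satisfiable in $M$, so \cref{prop:consistent} applies; and the bridge identity immediately gives the right-hand side $(\mathbb{E}[\varphi(x,y,b)])^{s_{\mu\otimes\lambda}} = ((\mu\otimes\lambda)\otimes\nu_{\tp(b/\mathbfcal{C})})(\varphi)$.

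For the left-hand side I would realize the product at the type level. Passing to a larger monster $\mathbfcal{C}'\succ\mathbfcal{C}$, choose $\beta\models s_\lambda|\mathbfcal{C}$ and then a realization of $s_\mu$ over $\mathbfcal{C}\beta$; since $s_\mu$ is finitely satisfiable, hence invariant, hence Borel-definable (as $T^{R}$ is NIP), this computes the product and yields $(\mathbb{E}[\varphi(x,y,b)])^{s_\mu \otimes s_\lambda} = (\mathbb{E}[\varphi(x,\beta,b)])^{s_\mu}$, i.e.\ we integrate out $x$ against $s_\mu$ while treating $(\beta,b)$ as a parameter. Applying the bridge identity to $s_\mu$ at the parameter $(\beta,b)\in\mathbfcal{C}'$ rewrites this as $(\mu \otimes \nu_{\tp(\beta,b/\mathbfcal{C}')})(\varphi(x,y,z))$.

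It then remains to identify the measure $\nu_{\tp(\beta,b/\mathbfcal{C}')}$ in the variables $(y,z)$. I would compute it directly: for $\psi(y,z)$ one has $\nu_{\tp(\beta,b/\mathbfcal{C}')}(\psi) = (\mathbb{E}[\psi(y,b)])^{s_\lambda}$, since we plug $z\mapsto b\in\mathbfcal{C}$ and evaluate at $\beta\models s_\lambda$; the bridge identity for $s_\lambda$ at the parameter $b$ then gives $(\lambda\otimes\nu_{\tp(b/\mathbfcal{C})})(\psi)$, so $\nu_{\tp(\beta,b/\mathbfcal{C}')} = \lambda\otimes\nu_{\tp(b/\mathbfcal{C})}$ (a local instance where $\nu$ does commute with $\otimes$, valid precisely because $\beta$ realizes exactly $s_\lambda$). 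Substituting and invoking associativity of the Morley product for measures invariant over a small model in NIP theories (\cref{remark:tools}(3), with $\mu,\lambda$ in the two invariant slots), we obtain $(\mu\otimes(\lambda\otimes\nu_{\tp(b/\mathbfcal{C})}))(\varphi) = ((\mu\otimes\lambda)\otimes\nu_{\tp(b/\mathbfcal{C})})(\varphi)$, which is exactly the right-hand side; quantifier elimination then closes the argument.

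The step I expect to require the most care is the bookkeeping around the extension $\mathbfcal{C}'$: one must verify that the defining equation of $s_\mu$, stated for parameters in $\mathbfcal{C}$, persists verbatim when the parameter $(\beta,b)$ lies in $\mathbfcal{C}'$ — that is, that the canonical finitely satisfiable extension of $s_\mu$ to $\mathbfcal{C}'$ is still computed by the same integral against $\nu_{\tp(-/\mathbfcal{C}')}$. This is where finite satisfiability (hence invariance over the small model $\mathbfcal{M}^{\Omega}$) is genuinely used, and it is the one point I would argue carefully rather than cite; the remainder is the four-step chain of equalities above.
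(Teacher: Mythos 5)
Your proposal is correct in substance and rests on the same three ingredients as the paper's own proof: the ``bridge identity'' (the defining integral of $s_{\theta}$ is literally a Morley product against the restriction measure), the identification of the restriction of the pair type with $\lambda \otimes \nu_{\tp(b/\mathbfcal{C})}$, and associativity of $\otimes$ for $M$-invariant measures in NIP. The identification step is precisely the paper's justification $(a)$, proved by the same computation you sketch via Proposition~\ref{prop:basic1} and Proposition~\ref{prop:basic2}. The one genuine structural difference is where you realize $s_{\lambda}$: you pass to a larger monster $\mathbfcal{C}' \succ \mathbfcal{C}$ and take $\beta \models s_{\lambda}|\mathbfcal{C}$, which creates exactly the obligation you flag at the end --- showing that the canonical $\mathbfcal{M}^{\Omega}$-invariant extension of $s_{\mu}$ to $\mathbfcal{C}'$ is still computed by the same integral formula. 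That obligation is dischargeable (apply Proposition~\ref{prop:consistent} with $\mathbfcal{C}'$ in place of $\mathbfcal{C}$ to see that the integral formula defines a consistent type over $\mathbfcal{C}'$ finitely satisfiable in $\mathbfcal{M}^{\Omega}$; note it restricts to $s_{\mu}$ on $\mathbfcal{C}$ since $\nu_{\tp(d/\mathbfcal{C}')} = \nu_{\tp(d/\mathbfcal{C})}$ for $d \in \mathbfcal{C}$; then use uniqueness of $\mathbfcal{M}^{\Omega}$-invariant extensions, which holds because the value at a parameter depends only on its type over $\mathbfcal{M}^{\Omega}$ and every such type is already realized in $\mathbfcal{C}$), but the paper sidesteps it entirely: it realizes only the restriction $s_{\lambda}|_{\mathbfcal{U}^{\Omega}b}$ by an element $c$ \emph{inside} $\mathbfcal{C}$, which is possible since $\mathbfcal{U}^{\Omega}b$ is small, and which is legitimate for computing the Morley product because $s_{\mu}$ is $\mathbfcal{M}^{\Omega}$-invariant. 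With that choice the pair $(c,b)$ lies in $\mathbfcal{C}$, the defining equation of $s_{\mu}$ applies verbatim with no extension lemma, and your flagged delicate point disappears; the rest of your argument then goes through essentially word for word.
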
 
\begin{proof} Fix $\varphi(x,y,z) \in \mathcal{L}_{xyz}$ and $b \in \mathbfcal{C}^{z}$. Let $c \models s_{\lambda}|_{\mathbfcal{U}^{\Omega}b}$.  Now consider the following computation:
\begin{align*}
(\mathbb{E}[\varphi(x,y,b)])^{s_{\mu} \otimes s_{\lambda}} &= (\mathbb{E}[\varphi(x,c,b)])^{s_{\mu}}\\ 
&= \int_{S_{yz}(M)} F_{\mu}^{\varphi} d \left( \nu_{\tp(c,b/\mathbfcal{C})} \right)\\
&\overset{(a)}{=} \int_{S_{yz}(M)} F_{\mu}^{\varphi} d \left( (\nu_{\tp(c/\mathbfcal{C})} \otimes \nu_{\tp(b/\mathbfcal{C})}) \right)\\
&= \left( \mu \otimes \left(\nu_{\tp(c/\mathbfcal{C})} \otimes \nu_{\tp(b/\mathbfcal{C})} \right) \right)(\varphi(x,y,z)) \\
&\overset{(b)}{=} \left( \mu \otimes \left(\lambda \otimes \nu_{\tp(b/\mathbfcal{C})} \right) \right)(\varphi(x,y,z)) \\
&\overset{(c)}{=} \left(\left(\mu \otimes \lambda \right) \otimes \nu_{\tp(b/\mathbfcal{C})}\right)(\varphi(x,y,z)) \\
&= \int_{S_{z}(M)} F_{\mu \otimes \lambda}^{\varphi} d( \nu_{\tp(b/\mathbfcal{C})}) \\
&= (\mathbb{E}[\varphi(x,y,b)])^{s_{\mu \otimes \lambda}}. \\
\end{align*} 
We provide the following justifications: 
\begin{enumerate}[(a)]
\item Notice $(\nu_{\tp(c/\mathbfcal{C})} \otimes \nu_{\tp(b/\mathbfcal{C})})|_{M} = \nu_{\tp(c,b/\mathbfcal{C})}|_{M}$. Indeed, if $\varphi(y,z,w) \in \mathcal{L}_{yzw}$ and $a \in M^{w}$, then 
\begin{align*}
\left( \nu_{\tp(c/\mathbfcal{C})} \otimes \nu_{\tp(b/\mathbfcal{C})} \right)(\varphi(y,z,a))  &\overset{(i)}{=} \left( \lambda \otimes \nu_{\tp(b/\mathbfcal{C})} \right)(\varphi(y,z,a))\\
&= \int_{S_{z}(M)} F_{\lambda}^{\varphi_{a}} d \left( \nu_{\tp(b/\mathbfcal{C})} \right)\\
&= \int_{S_{zw}(M)} F_{\lambda}^{\varphi} d \left(\nu_{\tp(b/\mathbfcal{C})} \otimes \delta_{\tp(a/\mathcal{U})} \right)\\
&\overset{(ii)}{=} \int_{S_{zw}(M)} F_{\lambda}^{\varphi} d \left(\nu_{\tp(b,f_{a}/\mathbfcal{C})} \right)\\
&=(\mathbb{E}[\varphi(y,b,f_{a})])^{s_{\lambda}}\\
&\overset{(iii)}{=}(\mathbb{E}[\varphi(y,b,f_{a})])^{\tp(c/\mathbfcal{C})}\\
&=(\mathbb{E}[\varphi(y,z,f_{a})])^{\tp(c,b/\mathbfcal{C})}\\
&=\nu_{\tp(c,b/\mathbfcal{C})}(\varphi(y,z,a)).
\end{align*}  
We provide the following mini-justifications: 
\begin{enumerate}[(i)]
\item Proposition \ref{prop:basic2}. 
\item Proposition \ref{prop:basic1}.
\item Since $c \models s_{\lambda}|_{\mathbfcal{U}^{\Omega}b}$. 
\end{enumerate} 
\item Proposition \ref{prop:basic2}.
\item Associativity of the Morley product in NIP theories. \qedhere
\end{enumerate} 
\end{proof}

\begin{remark}\label{remark:def-arb} Theorem \ref{prop:main1} and along with Proposition \ref{prop:equiv} give another proof of the following: Suppose that $\mu \in \mathfrak{M}_{x}^{\inv}(\mathcal{U},M)$, $\nu \in \mathfrak{M}_{x}^{\inv}(\mathcal{U},M)$, and both $\mu$ and $\nu$ are definable. Then
\begin{equation*}
    r_{\mu}(x) \otimes r_{\nu}(y) = r_{\mu \otimes \nu}(x,y). 
\end{equation*}
This was originally proved in \cite[Proposition 3.15]{CGH2}. The only difference in the proof is the justification of $(c)$. Here ``associativity of the Morley product in NIP theories" is replaced by ``associativity of the Morley product for definable measures in arbitrary theories" (see Remark \ref{remark:tools}).
\end{remark}

\begin{remark} If the types $\{s_{\mu} : \mu \in \mathfrak{M}_{x}^{\inv}(\mathcal{U},M)\}$ are consistent, then the proof of Theorem \ref{prop:main1} directly generalizes to this case as well. 
\end{remark}

\section{Going down: The restriction map and the Morley product} 

We warn the reader that the restriction map does not interact well with arbitrary Morley products.

\begin{warning} We remark that in general,  $\nu_{p} \otimes \nu_{q} \neq \nu_{p \otimes q}$. Fix any $A \subseteq \Omega_0$ such that $\mathbb{P}_0(A) = 1/2$ and $b_1,b_2 \in M$. Consider the random elements $g_1,g_2:\Omega \to \mathcal{U}$ where
\begin{equation*}
g_1(t)=\begin{cases}
\begin{array}{cc}
b_{1} & t\in A,\\
b_{2} & t\in A^{c},
\end{array}\end{cases} 
\text{and } g_2(t)=\begin{cases}
\begin{array}{cc}
b_{2} & t\in A,\\
b_{1} & t\in A^{c}.
\end{array}\end{cases} 
\end{equation*} 
Let $p = \tp(g_1 /\mathbfcal{C})$ and $q = \tp(g_2/\mathbfcal{C})$. 
\begin{equation*}
    \nu_{p \otimes q}(x = y) = 0 \text{ and } (\nu_{p} \otimes \nu_{q}) (x = y) = 1/2. 
\end{equation*}
\end{warning} 

This warning shows us that we cannot expect $\nu_{p \otimes q} = \nu_{p} \otimes \nu_{q}$. However, we would like a condition which implies that the Morley product commutes with the restriction map. It turns our that one can give a characterization of this property, for finitely satisfiable types, by writing down a family of equalities. This condition is a kind of \emph{strong probabilistically independent} which we call \emph{prob. independence}. We then prove that in the NIP setting, $p$ is prob. independent over $q$ if and only if $\nu_{p \otimes q} = \nu_{p} \otimes \nu_{q}$. 

\begin{definition}[Prob. independence] Let $h \in (\mathcal{U}_0^{\Omega})^{x}$ and $b \in {\mathbfcal{C}}^{y}$. We say that $h$ is prob. independent over $b$ if for any partition $\mathcal{A}$ for $h$, $\mathcal{L}$-formulas $\theta(x,y,z)$, and points $e \in \mathcal{U}^{z}$, we have that
\begin{equation*}
    \mathbb{E}[\theta(h,b,f_{e})] = \sum_{A \in \mathcal{A}} \mathbb{P}_0(A) \mathbb{E}[\theta(f_{h|_A},b,f_{e})]. 
\end{equation*}
If $p \in S_{x}^{\fs}(\mathbfcal{C},\mathbfcal{M}^{\Omega})$, we say that $p$ is prob. independent over $b$ if there exists a net $(h_i)_{i \in I}$ of points in $M_{0}^{\Omega}$ such that $\lim_{i \in I} \tp(h_i/\mathbfcal{C}) = p $, and for any $\mathcal{L}$-formula $\theta(x,y,z)$ and parameters $e$ from $\mathcal{U}^{z}$, 
\begin{equation*}
    \mathbb{E}[\theta(x,b,f_{e})]^{p} = \lim_{i \in I} \sum_{A \in \mathcal{A}_i} \mathbb{P}_0(A)\mathbb{E}[\theta(f_{h_i|_A},b,f_e)]. 
\end{equation*}
where $\mathcal{A}_i$ is a partition for $h_i$. 

Finally, we say that a type $p \in S_{x}^{\fs}(\mathbfcal{C},\mathbfcal{M}^{\Omega})$ is prob. independent over a type $q \in S_{y}^{\fs}(\mathbfcal{C},\mathbfcal{M}^{\Omega})$ if for some/any $b \models q|_{\mathbfcal{M}^{\Omega}}$ we have that $p$ is prob. independent over $b$. 
\end{definition}

We now give three examples of different kinds of random variables which satisfy the condition above. Our first example deals with constant random variables, the second with \emph{extensions-by-definition}, and the final one is constructed by hand.

\begin{fact} For any $e \in M$, and $b \in \mathbfcal{C}$, we have that
\begin{enumerate}
    \item $e$ is prob. independent over $b$. 
    \item $\tp(f_{e}/\mathbfcal{C})$ is prob. independent over $b$. 
    \item For any $a \in M$, $\tp(f_{e}/\mathbfcal{C})$ is prob. independent over $\tp(f_{a}/\mathbfcal{C})$. 
\end{enumerate}
\end{fact}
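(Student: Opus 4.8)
The plan is to derive all three items from a single elementary observation: a constant random variable has constant restriction on every block of any partition, so the sum appearing in the definition of prob. independence collapses. Throughout I write $f_d$ for the auxiliary constant random variable attached to a parameter $d \in \mathcal{U}^{z}$, reserving $e$ and $a$ for the roles they play in the statement.

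For item (1), I read ``$e$ is prob. independent over $b$'' at the level of the random variable $f_e \in M_0^{\Omega}$, and verify the defining equality directly. Fix a partition $\mathcal{A}$ for $f_e$, an $\mathcal{L}$-formula $\theta(x,y,z)$, and $d \in \mathcal{U}^{z}$. Since $f_e|_A = e$ for every $A \in \mathcal{A}$, the right-hand side is
\[
\sum_{A \in \mathcal{A}} \mathbb{P}_0(A)\,\mathbb{E}[\theta(f_e,b,f_d)] = \mathbb{E}[\theta(f_e,b,f_d)]\sum_{A \in \mathcal{A}} \mathbb{P}_0(A) = \mathbb{E}[\theta(f_e,b,f_d)],
\]
using that $\mathcal{A}$ partitions $\Omega_0$, and this equals the left-hand side $\mathbb{E}[\theta(f_e,b,f_d)]$. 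For item (2) I exhibit the required net: take the constant net $h_i = f_e$. Then $\tp(h_i/\mathbfcal{C}) = \tp(f_e/\mathbfcal{C})$ for all $i$, so $\lim_{i} \tp(h_i/\mathbfcal{C}) = \tp(f_e/\mathbfcal{C})$ trivially, and the limit equality required for a type reduces, term by term, to the equality just established in item (1). Note that neither computation uses anything about $b$, so independence in fact holds over \emph{every} $b \in \mathbfcal{C}$.

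Item (3) is then a matter of choosing a convenient realization. Since $a \in M$, the constant random variable $f_a$ lies in $M_0^{\Omega} \subseteq \mathbfcal{M}^{\Omega} \prec \mathbfcal{C}$, so $b := f_a$ is an element of $\mathbfcal{C}$ realizing $\tp(f_a/\mathbfcal{C})|_{\mathbfcal{M}^{\Omega}} = \tp(f_a/\mathbfcal{M}^{\Omega})$. By item (2), $\tp(f_e/\mathbfcal{C})$ is prob. independent over this $b = f_a$, and the ``some'' direction of the definition of prob. independence over a type then yields that $\tp(f_e/\mathbfcal{C})$ is prob. independent over $\tp(f_a/\mathbfcal{C})$.

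There is no substantial obstacle here; the content is entirely bookkeeping around the layered definition. The only point that warrants care is the ``some/any'' clause in the definition of prob. independence over a type: because item (2) establishes independence over every parameter $b \in \mathbfcal{C}$, both quantifiers in that clause are satisfied simultaneously, so the particular choice of realization in item (3) is immaterial and the argument is insensitive to how the ``some/any'' is interpreted.
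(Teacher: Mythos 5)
Your proof is correct. The paper states this Fact without any proof (it is offered as a list of easy examples immediately after the definition), and your verification — the constant random variable $f_e$ has $f_e|_A = e$ on every block, so the sum collapses via $\sum_{A \in \mathcal{A}} \mathbb{P}_0(A) = 1$; the constant net $h_i = f_e$ witnesses the type-level statement; and the some/any clause is moot because independence holds over every $b \in \mathbfcal{C}$ — is exactly the routine bookkeeping argument the paper implicitly relies on.
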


\begin{proposition}[T NIP] If $\mu \in \mathfrak{M}_{x}^{\fs}(\mathcal{U},M)$ and $b \in \mathbfcal{C}^{y}$, then $s_{\mu}$ is prob. independent over $b$. This implies that for any $q \in S_{y}^{\fs}(\mathbfcal{C},\mathbfcal{M}^{\Omega})$, $s_{\mu}$ is prob. independent over $q$.
\end{proposition}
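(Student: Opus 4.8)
The plan is to exhibit a single witnessing net and then let continuity of the Morley product do all the work. The key observation that makes this painless is that the definition of prob. independence for a type only demands the \emph{limiting} equality, so the individual members $h_i$ of the net need \textbf{not} themselves be prob. independent over $b$; I therefore avoid any attempt to build partitions independent of the (arbitrary) events of $\mathbfcal{C}$ attached to $b$. Since $s_\mu$ is finitely satisfiable in $\mathbfcal{M}^\Omega$ by Proposition~\ref{prop:consistent}, and since $M_0^\Omega$ is metrically dense in $\mathbfcal{M}^\Omega$ with the predicates $\mathbb{E}[\varphi]$ uniformly continuous, I can fix a net $(h_i)_{i\in I}$ in $M_0^\Omega$ with $\lim_i \tp(h_i/\mathbfcal{C}) = s_\mu$; let $\mathcal{A}_i$ be a partition for $h_i$. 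I then verify the defining equation for an arbitrary $\theta(x,y,z)$ and $e\in\mathcal{U}^z$, absorbing the constant $f_e$ into the parameter by setting $b^\ast := (b,f_e)\in\mathbfcal{C}^{yz}$.

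First I rewrite the right-hand side as a Morley product. By Fact~\ref{Fact:easy-1}, $\nu_{\tp(h_i/\mathbfcal{C})} = \sum_{A\in\mathcal{A}_i}\mathbb{P}_0(A)\delta_{h_i|_A}$ is a finite convex combination of Dirac measures at points of $M$, hence Borel-definable and finitely satisfiable in $M$. Unwinding the definition of the Morley product against this measure, together with the definition of $\nu_-$, gives
\begin{equation*}
\sum_{A\in\mathcal{A}_i}\mathbb{P}_0(A)\,\mathbb{E}[\theta(f_{h_i|_A},b,f_e)] = \left(\nu_{\tp(h_i/\mathbfcal{C})}\otimes\nu_{\tp(b^\ast/\mathbfcal{C})}\right)(\theta(x,y,z)).
\end{equation*}
Now I pass to the limit. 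Since $\nu_-$ is continuous (Proposition~\ref{prop:cont-v}) and $\nu_{s_\mu}=\mu$ (Proposition~\ref{prop:basic0}), we have $\lim_i\nu_{\tp(h_i/\mathbfcal{C})}=\mu$ in $\mathfrak{M}_x^{\fs}(\mathcal{U},M)$. Applying left-continuity of the Morley product for finitely satisfiable measures in NIP theories (Remark~\ref{remark:tools}(4)), with fixed right factor $\nu_{\tp(b^\ast/\mathbfcal{C})}\in\mathfrak{M}_{yz}(\mathcal{U})$ and formula $\theta$, yields
\begin{equation*}
\lim_i\left(\nu_{\tp(h_i/\mathbfcal{C})}\otimes\nu_{\tp(b^\ast/\mathbfcal{C})}\right)(\theta(x,y,z)) = \left(\mu\otimes\nu_{\tp(b^\ast/\mathbfcal{C})}\right)(\theta(x,y,z)),
\end{equation*}
the right-hand product being defined because $\mu$ is $M$-invariant, hence Borel-definable (Remark~\ref{remark:tools}(1)).

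It remains to recognize the left-hand side of the prob. independence equation as this same product. By the defining formula for $s_\mu$,
\begin{equation*}
\mathbb{E}[\theta(x,b,f_e)]^{s_\mu} = \int_{S_{yz}(M)} F_\mu^{\theta}\, d\nu_{\tp(b^\ast/\mathbfcal{C})} = \left(\mu\otimes\nu_{\tp(b^\ast/\mathbfcal{C})}\right)(\theta(x,y,z)),
\end{equation*}
so combining the three displays gives the required equality, witnessing that $s_\mu$ is prob. independent over $b$. Since $b\in\mathbfcal{C}^y$ was arbitrary, the ``for some/any'' clause of the definition of prob. independence over a type immediately yields that $s_\mu$ is prob. independent over every $q\in S_y^{\fs}(\mathbfcal{C},\mathbfcal{M}^\Omega)$. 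The only genuinely delicate point is the very first reduction: the temptation is to try to make each $h_i$ prob. independent over $b$, which forces one to construct partitions in $\mathcal{B}_0$ independent of arbitrary events of $\mathbfcal{C}$ and seems hard; the resolution is to note that the limiting equation is exactly a convergence-of-Morley-products statement delivered for free by NIP left-continuity. A secondary, routine point is justifying that the approximating net can be taken inside $M_0^\Omega$ rather than its metric completion, which follows from density together with uniform continuity of the predicates $\mathbb{E}[\varphi]$.
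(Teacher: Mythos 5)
Your proposal is correct and takes essentially the same route as the paper's own proof: both fix a net $(h_i)_{i\in I}$ in $M_0^{\Omega}$ converging to $s_{\mu}$ (available by Proposition \ref{prop:consistent}), identify $\sum_{A\in\mathcal{A}_i}\mathbb{P}_0(A)\mathbb{E}[\theta(f_{h_i|_A},b,f_e)]$ with a Morley product against $\nu_{\tp(h_i/\mathbfcal{C})}$ via Fact \ref{Fact:easy-1}, pass to the limit using $\nu_{s_\mu}=\mu$ (Proposition \ref{prop:basic0}), continuity of $\nu_{-}$ (Proposition \ref{prop:cont-v}), and NIP left-continuity of the Morley product (Remark \ref{remark:tools}), and finally match this with $(\mathbb{E}[\theta(x,b,f_e)])^{s_\mu}=\big(\mu\otimes\nu_{\tp(b,f_e/\mathbfcal{C})}\big)(\theta)$ from the definition of $s_{\mu}$. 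The only difference is presentational: the paper runs a single chain of equalities from $(\mathbb{E}[\theta(x,b,f_e)])^{s_\mu}$ outward, while you compute both sides and meet at the Morley product in the middle.
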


\begin{proof} By Proposition \ref{prop:consistent}, we know that $s_{\mu}$ is finitely satisfiable in $\mathbfcal{M}^{\Omega}$ and so the proposition itself makes sense. Fix an $\mathcal{L}$-formula $\varphi(x,y,z)$, parameters $e \in \mathcal{U}^{z}$, and a net $(h_i)_{i \in I}$ such that $\lim_{i \in I} \tp(h_i/\mathbfcal{C}) = s_{\mu}$. For each $i \in I$, we let $\mathcal{A}_i$ be a partition for $h_i$. Notice that $\lim_{i \in I} \nu_{\tp(h_i/\mathbfcal{C})} = \mu$ because 

\begin{equation*}
\mu = \nu_{s_{\mu}} = \nu_{\lim_{i \in I} \tp(h_i/ \mathbfcal{C})} = \lim_{i \in I} \nu_{\tp(h_i/\mathbfcal{C})}. 
\end{equation*}
where the first equality follows from Proposition \ref{prop:basic0} and the last equality follows from continuity of $\nu_{-}$ (Proposition \ref{prop:cont-v}). Now we compute: 
\begin{align*}
(\mathbb{E}[\varphi(x,b,f_{e})])^{s_{\mu}} &= \int_{S_{yz}(M)} F_{\mu}^{\varphi(x;yz)} d\nu_{\tp(b,f_e/\mathbfcal{C})} \\
&= (\mu \otimes \nu_{\tp(b/\mathbfcal{C})})(\varphi(x,y,e))\\
&= ((\lim_{i \in I} \nu_{\tp(h_i/\mathbfcal{C})}) \otimes \nu_{\tp(b/\mathbfcal{C})})(\varphi(x,y,e)) \\
&\overset{(*)}{=} \lim_{i \in I} ((\nu_{\tp(h_i/\mathbfcal{C})} \otimes \nu_{\tp(b/\mathbfcal{C})})(\varphi(x,y,e))) \\
&\overset{(**)}{=} \lim_{i \in I} \left( \left(  \sum_{A \in \mathcal{A}_i} \mathbb{P}_0(A) \delta_{h_i|_{A}} \right)  \otimes \nu_{\tp(b/\mathbfcal{C})}(\varphi(x,y,e)) \right) \\
&= \lim_{i \in I} \sum_{A \in \mathcal{A}_i} \mathbb{P}_0(A)\nu_{\tp(b/\mathbfcal{C})} \varphi(h_i|_{A},y,e)) \\
&=\lim_{i \in I} \sum_{A \in \mathcal{A}_i} \mathbb{P}(A)\mathbb{E}[\theta(f_{h_i|_A},b,f_e)]. 
\end{align*}
We remark that Equation $(*)$ follows from left-continuity of the Morley product in NIP theories (see Remark  \ref{remark:tools}). Equation $(**)$ is just Fact \ref{Fact:easy-1}. 
\end{proof}

We now given an example of a type and a point in $\mathbfcal{C}$ such that the type is not prob independent over all points in $\mathbfcal{C}$. Hence the results in this section are not just about \emph{extensions-by-definition}. 

\begin{example} Consider $\mathbfcal{M}^{[0,1]} \prec \mathbfcal{U}^{[0,1]}$ and the partition $\mathcal{A} =\{A_1,A_2,A_3,A_4\}$ where 
\begin{itemize}
    \item $A_1 = [0,1/4)$. 
    \item $A_2 = [1/4,1/2)$. 
    \item $A_3 = [1/2,3/4)$. 
    \item $A_4 = [3/4, 1]$. 
\end{itemize}
Let $a,b,c \in M$ and consider the elements
    \begin{equation*}
g_1(t)=\begin{cases}
\begin{array}{cc}
a & t\in A_1 \cup A_2,\\
b & t\in A_3 \cup A_4.
\end{array}\end{cases} 
g_2(t)=\begin{cases}
\begin{array}{cc}
a & t\in A_1 \cup A_3,\\
b & t\in A_2 \cup A_4.
\end{array}\end{cases} 
g_3(t)=\begin{cases}
\begin{array}{cc}
a & t\in A_1 \cup A_2,\\
c & t\in A_3 \cup A_4.
\end{array}\end{cases}
\end{equation*} 
We claim that the type $\tp(g_1/\mathbfcal{C})$ is prob. independent over $g_2$, but $\tp(g_1/\mathbfcal{C})$ is not prob. independent over $g_3$. We leave this as an exercise. 
\end{example}

\begin{proposition}[T NIP]\label{prop:ind} If $p \in S_{x}^{\fs}(\mathbfcal{C},\mathbfcal{M}^{\Omega})$, $q \in S_{y}^{\fs}(\mathbfcal{C},\mathbfcal{M}^{\Omega})$ and $p$ is prob. independent over $q$, then $\nu_{p \otimes q} = \nu_{p} \otimes \nu_{q}$. 
\end{proposition}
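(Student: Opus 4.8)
The plan is to compute both measures $\nu_{p \otimes q}$ and $\nu_p \otimes \nu_q$ on an arbitrary formula $\varphi(x,y,e)$ with $e \in \mathcal{U}^{z}$, and show they agree. By definition, $\nu_{p \otimes q}(\varphi(x,y,e)) = (\mathbb{E}[\varphi(x,y,f_e)])^{p \otimes q}$. To unwind the Morley product $p \otimes q$ on the randomization, I would pick a realization $b \models q|_{\mathbfcal{M}^{\Omega}}$ (or more precisely a realization of $q$ restricted to a large enough set) and compute the left factor using the $\mathbb{E}$-formula in the variable $x$ over the parameter $b$. Since $p$ is finitely satisfiable in $\mathbfcal{M}^{\Omega}$, I can approximate $p$ by a net $(h_i)_{i \in I}$ of points in $M_0^{\Omega}$ with $\lim_{i} \tp(h_i/\mathbfcal{C}) = p$ witnessing prob. independence over $b$.

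The key computation proceeds as follows. First I would rewrite $\nu_{p \otimes q}(\varphi(x,y,e))$ as $(\mathbb{E}[\varphi(x,b,f_e)])^{p}$ where $b$ realizes $q$ over the relevant parameters; this is the analogue of how the Morley product evaluates on the randomization, reducing the bivariate type $p \otimes q$ to evaluating $p$ against a realization $b$ of $q$. Then, applying the prob. independence hypothesis directly, this equals $\lim_{i \in I} \sum_{A \in \mathcal{A}_i} \mathbb{P}_0(A) \mathbb{E}[\varphi(f_{h_i|_A},b,f_e)]$. On the other side, I would expand $(\nu_p \otimes \nu_q)(\varphi(x,y,e))$ using continuity of $\nu_{-}$ (Proposition \ref{prop:cont-v}) to write $\nu_p = \lim_i \nu_{\tp(h_i/\mathbfcal{C})}$, and then use Fact \ref{Fact:easy-1} to express each $\nu_{\tp(h_i/\mathbfcal{C})}$ as the finitely-supported measure $\sum_{A \in \mathcal{A}_i} \mathbb{P}_0(A)\delta_{h_i|_A}$. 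Left-continuity of the Morley product in NIP theories (Remark \ref{remark:tools}) lets me pull the limit outside, giving $\lim_i \sum_{A} \mathbb{P}_0(A) (\delta_{h_i|_A} \otimes \nu_q)(\varphi(x,y,e))$, and since $\delta_{h_i|_A}$ is a point mass this unwinds to $\lim_i \sum_A \mathbb{P}_0(A) \nu_q(\varphi(h_i|_A, y, e))$.

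The final step is to recognize these two expressions as identical: $\nu_q(\varphi(h_i|_A,y,e)) = \mathbb{E}[\varphi(f_{h_i|_A},b,f_e)]$ where $b \models q|_{\mathbfcal{M}^{\Omega}}$, so both sides reduce to $\lim_i \sum_{A \in \mathcal{A}_i} \mathbb{P}_0(A) \mathbb{E}[\varphi(f_{h_i|_A},b,f_e)]$. Matching up the constant-value evaluations of $\nu_q$ against the $\mathbb{E}$-formula expressions is exactly where the prob. independence condition was designed to be applied, so the whole argument is essentially a bookkeeping exercise once the two sides are expanded in parallel.

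I expect the main obstacle to be the careful handling of the realization $b$ of $q$ and making sure the Morley product $p \otimes q$ genuinely reduces to the one-variable evaluation $(\mathbb{E}[\varphi(x,b,f_e)])^{p}$ with the correct parameters; this requires that $q$ be finitely satisfiable so that we can realize it in a suitable extension and that the net $(h_i)$ witnessing prob. independence be chosen for this particular $b$. The subtlety is that prob. independence over $q$ is defined via \emph{some/any} realization $b \models q|_{\mathbfcal{M}^{\Omega}}$, so I would need to confirm that the net witnessing independence over one such $b$ suffices, which follows from invariance of the relevant types. The remaining steps are routine applications of continuity and Fact \ref{Fact:easy-1}.
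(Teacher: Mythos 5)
Your proposal is correct and is essentially the paper's own proof: the same witnessing net $(h_i,\mathcal{A}_i)_{i \in I}$, the same reduction of $\nu_{p \otimes q}(\varphi(x,y,e))$ to $(\mathbb{E}[\varphi(x,b,f_e)])^{p}$ followed by the prob.\ independence hypothesis, and the same three tools (Fact \ref{Fact:easy-1}, continuity of $\nu_{-}$, and NIP left-continuity of the Morley product) to handle the other side. The only difference is organizational: the paper runs a single chain of equalities passing through $\nu_{\tp(b/\mathbfcal{C})}$ and only replaces it by $\nu_{q}$ in the final step, whereas you expand both sides to the common expression $\lim_{i} \sum_{A \in \mathcal{A}_i} \mathbb{P}_0(A)\,\mathbb{E}[\varphi(f_{h_i|_A},b,f_e)]$ --- the same bookkeeping, and the subtlety you flag about where $b$ realizes $q$ is resolved exactly as you suggest, via the some/any clause in the definition of prob.\ independence.
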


\begin{proof} Since $p$ is prob. independent over $q$, there exists a net $(h_i)_{i \in I}$ of elements in $M_0^{\Omega}$ witnessing this property. For each $i \in I$, we let $\mathcal{A}_i$ be a partition for $h_i$. Let $b \models q|_{\mathbfcal{M}^{\Omega}}$. Fix an $\mathcal{L}$-formula $\varphi(x,y,z)$ and a tuple of parameters $e \in \mathcal{U}^{z}$. Now consider the following computation: 
\begin{align*}
\nu_{p \otimes q}(\varphi(x,y,e)) &= 
(\mathbb{E}[\varphi(x,y,f_{e})])^{p \otimes q} 
\\
&= (\mathbb{E}[\varphi(x,b,f_{e})])^{p} \\
&\overset{(a)}{=} \lim_{i \in I} \sum_{A \in \mathcal{A}_i} \mathbb{P}_0(A)\mathbb{E}[\varphi(f_{h_i|_A},b,f_{e})] \\
&= \lim_{i \in I} \sum_{A \in \mathcal{A}_i} \mathbb{P}_0(A)\nu_{\tp(b/\mathbfcal{C})}(\varphi(h_i|_{A},y,e)) \\
&= \lim_{i \in I} \left( \left( \sum_{A \in \mathcal{A}_i} \mathbb{P}_0(A) \delta_{h_i|_{A}} \right) \otimes \nu_{\tp(b/\mathbfcal{C})} \right)(\varphi(x,y,e)) \\
&\overset{(b)}{=}  \left( \left( \lim_{i \in I} \sum_{A \in \mathcal{A}_i} \mathbb{P}_0(A) \delta_{h_i|_{A}} \right) \otimes \nu_{\tp(b/\mathbfcal{C})} \right)(\varphi(x,y,e)) \\
&\overset{(c)}{=}  \left( \left( \lim_{i \in I} \nu_{\tp(h_i/\mathbfcal{C})} \right) \otimes \nu_{\tp(b/\mathbfcal{C})} \right)(\varphi(x,y,e)) \\
&\overset{(d)}{=}  \left( \left(  \nu_{\lim_{i \in I}\tp(h_i/\mathbfcal{C})} \right) \otimes \nu_{\tp(b/\mathbfcal{C})} \right)(\varphi(x,y,e)) \\
&=  \left( \nu_{p}  \otimes \nu_{\tp(b/\mathbfcal{C})} \right)(\varphi(x,y,e))\\
&\overset{(e)}{=}  \left( \nu_{p}  \otimes \nu_{q} \right)(\varphi(x,y,e)).
\end{align*}
We provide the following justifications: 
\begin{enumerate}[($a$)]
    \item Hypothesis on choice of $(h_i)_{i \in I}$, i.e. witnessing prob. independence.  
    \item In NIP, the map $-\otimes \lambda (\psi(x,y))$ is continuous and thus commutes with nets (see Remark \ref{remark:tools}). 
    \item Reverse direction of Fact \ref{Fact:easy-1}.
    \item Continuity of the map $\nu_{-}$ (Proposition \ref{prop:cont-v}).
    \item Straightforward to check that $\nu_{\tp(b/\mathbfcal{C})}|_{M} = \nu_{q}|_{M}$. Similar to Proposition \ref{prop:basic2}. 
\end{enumerate}
\end{proof}

We now prove the main result of this section. 

\begin{theorem}[T NIP] Suppose $p \in S_{x}^{\fs}(\mathbfcal{C},\mathbfcal{M}^{\Omega})$ and 
$q \in S_{x}^{\fs}(\mathbfcal{C},\mathbfcal{M}^{\Omega})$. The follow are equivalent: 
\begin{enumerate}
    \item $p$ is prob. independent over $q$.
    \item $\nu_{p \otimes q} = \nu_{p} \otimes \nu_{q}$. 
    \item For any $\mathcal{L}$-formula $\varphi(x,y,z)$ and any $e \in \mathcal{U}^{z}$, for any $(h_i,\mathcal{A}_i)_{i \in I}$ such that $h_i \in M_{0}^{\Omega}$, $\mathcal{A}_i$ is a partition for $h_i$ and $\lim_{i \in I} \tp(h_i/\mathbfcal{M}^{\Omega}) = p$, 
    \begin{equation*}
        \lim_{i \in I} \sum_{A \in \mathcal{A}_i} \mathbb{P}(A) (\mathbb{E}[\varphi(f_{h_i|_{A}},y,f_{e})])^{q} = (\mathbb{E}[\varphi(x,y,f_{e})])^{p \otimes q}
    \end{equation*}
    In other words, any net from $M_{0}^{\Omega}$ which converges to $p$ witnesses the fact that $p$ is prob. independent over $q$. 
\end{enumerate}
\end{theorem}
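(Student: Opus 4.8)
The plan is to establish the cycle $(1)\Rightarrow(2)\Rightarrow(3)\Rightarrow(1)$. The implication $(1)\Rightarrow(2)$ is exactly Proposition \ref{prop:ind}, so nothing new is required there. The engine for the other two implications is a single continuity observation: for a fixed $\mathcal{L}$-formula $\varphi(x,y,z)$ and $e\in\mathcal{U}^{z}$, consider the map
\[
\Lambda_{\varphi,e}\colon S_{x}^{\fs}(\mathbfcal{C},\mathbfcal{M}^{\Omega})\to[0,1],\qquad \Lambda_{\varphi,e}(r)=(\nu_{r}\otimes\nu_{q})(\varphi(x,y,e)).
\]
Since $\nu_{-}$ is continuous (Proposition \ref{prop:cont-v}) and carries finitely satisfiable types to finitely satisfiable measures, and since the Morley product is left-continuous on finitely satisfiable measures in the NIP setting (Remark \ref{remark:tools}), the map $\Lambda_{\varphi,e}$ is continuous. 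The second ingredient is the product-type identity $\nu_{p\otimes q}(\varphi(x,y,e))=(\mathbb{E}[\varphi(x,y,f_{e})])^{p\otimes q}=(\mathbb{E}[\varphi(x,b,f_{e})])^{p}$ for $b\models q$ realized over a model containing $f_{e}$, which is just the definition of the Morley product of the invariant types $p,q$ combined with the definition of $\nu_{-}$.

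For $(2)\Rightarrow(3)$, I would take a net $(h_{i},\mathcal{A}_{i})_{i\in I}$ from $M_{0}^{\Omega}$ converging to the global type $p$ and rewrite, using Fact \ref{Fact:easy-1}, each finite sum as a Morley product:
\[
\sum_{A\in\mathcal{A}_{i}}\mathbb{P}_{0}(A)\,(\mathbb{E}[\varphi(f_{h_{i}|_{A}},y,f_{e})])^{q}=\Big(\nu_{\tp(h_{i}/\mathbfcal{C})}\otimes\nu_{q}\Big)(\varphi(x,y,e))=\Lambda_{\varphi,e}(\tp(h_{i}/\mathbfcal{C})).
\]
Passing to the limit and invoking continuity of $\Lambda_{\varphi,e}$ yields $\lim_{i\in I}\Lambda_{\varphi,e}(\tp(h_{i}/\mathbfcal{C}))=\Lambda_{\varphi,e}(p)=(\nu_{p}\otimes\nu_{q})(\varphi(x,y,e))$, and hypothesis $(2)$ identifies this last quantity with $\nu_{p\otimes q}(\varphi(x,y,e))=(\mathbb{E}[\varphi(x,y,f_{e})])^{p\otimes q}$. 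This is precisely the equality asserted in $(3)$.

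For $(3)\Rightarrow(1)$, I would fix $b\models q|_{\mathbfcal{M}^{\Omega}}$ and choose any net $(h_{i})$ from $M_{0}^{\Omega}$ with $\lim_{i\in I}\tp(h_{i}/\mathbfcal{C})=p$ (such a net exists since $p$ is finitely satisfiable in $\mathbfcal{M}^{\Omega}$ and $M_{0}^{\Omega}$ is metrically dense). Condition $(3)$ applies to this net, and translating both sides through the identities $(\mathbb{E}[\varphi(f_{h_{i}|_{A}},y,f_{e})])^{q}=\mathbb{E}[\varphi(f_{h_{i}|_{A}},b,f_{e})]$ and $(\mathbb{E}[\varphi(x,y,f_{e})])^{p\otimes q}=(\mathbb{E}[\varphi(x,b,f_{e})])^{p}$ converts the displayed equality of $(3)$ into the defining equality of prob. independence of $p$ over $b$. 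Since $\varphi$ and $e$ were arbitrary, $p$ is prob. independent over $b$, hence over $q$.

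The main obstacle — and the step deserving the most care — is the bookkeeping around the two parameter sets. The continuity of $\Lambda_{\varphi,e}$ is a statement about convergence of the \emph{global} types $\tp(h_{i}/\mathbfcal{C})$ (equivalently, convergence of the measures $\nu_{\tp(h_{i}/\mathbfcal{C})}$ over all of $\mathcal{U}$), whereas the finite sums themselves only record $\tp(h_{i}/\mathbfcal{M}^{\Omega})$; one must therefore ensure the net is taken to converge to the global type $p$, and not merely to $p|_{\mathbfcal{M}^{\Omega}}$, before applying continuity of $\nu_{-}$ and left-continuity of the product. Similarly, because $e$ ranges over $\mathcal{U}^{z}$ while $b$ realizes $q|_{\mathbfcal{M}^{\Omega}}$, I would verify that the value $(\mathbb{E}[\varphi(x,b,f_{e})])^{p}$ computed from such a $b$ agrees with the product-type value $\nu_{p\otimes q}(\varphi(x,y,e))$; this is exactly where the $\mathbfcal{M}^{\Omega}$-invariance of $p$ and $q$ (a consequence of finite satisfiability) enters, and it is also what justifies the \emph{some/any} clause in the definition of prob. independence over a type.
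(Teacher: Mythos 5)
Your proposal is correct and takes essentially the same route as the paper: the same cycle $(1)\Rightarrow(2)\Rightarrow(3)\Rightarrow(1)$, with $(1)\Rightarrow(2)$ quoted from Proposition \ref{prop:ind}, $(2)\Rightarrow(3)$ proved from exactly the paper's ingredients (Fact \ref{Fact:easy-1}, continuity of $\nu_{-}$ from Proposition \ref{prop:cont-v}, and NIP left-continuity of the Morley product from Remark \ref{remark:tools}, merely repackaged as continuity of your map $\Lambda_{\varphi,e}$), and $(3)\Rightarrow(1)$ done by unwinding the definitions. Your explicit bookkeeping about global versus $\mathbfcal{M}^{\Omega}$-convergence of the net, and about realizing $q$ over a model containing $f_{e}$, is somewhat more careful than the paper's treatment, which dismisses $(3)\Rightarrow(1)$ as a quantifier swap, but the mathematical content is the same.
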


\begin{proof} $(1) \Rightarrow (2)$ is Proposition \ref{prop:ind}. $(3) \Rightarrow (1)$ is trivial. Indeed, we are only swapping \emph{for all} and \emph{there exists}. We prove $(2) \Rightarrow (3)$. Fix a formula $\varphi(x,y,z)$, a sequence $(h_i,A_i)_{i \in I}$ with the relavant properties, and an element $e \in \mathcal{U}^{z}$. Then 
\begin{align*}
(\mathbb{E}[\varphi(x,y,f_{e})])^{p \otimes q} &= \nu_{p \otimes q}(\varphi(x,y,e))\\ &= (\nu_{p} \otimes \nu_{q})(\varphi(x,y,e)) \\&= (\nu_{\lim_{i \in I} \tp(h_i/\mathbfcal{C})} \otimes \nu_{q})(\varphi(x,y,e)) \\ &= ((\lim_{i \in I} \nu_{\tp(h_i/\mathbfcal{C})})  \otimes \nu_{q})(\varphi(x,y,e))  \\ &= \lim_{i \in I} \Big[ ( \nu_{\tp(h_i/\mathbfcal{C})}  \otimes \nu_{q})(\varphi(x,y,e)) \Big] \\ &= \lim_{i \in I} \left[ \left( \left( \sum_{A \in \mathcal{A}_i} \mathbb{P}(A) \delta_{h_i|_{A}} \right) \otimes \nu_{q} \right)(\varphi(x,y,e)) \right] \\ &= \lim_{i \in I} \left[ \sum_{A \in \mathcal{A}_i} \mathbb{P}(A) \nu_{q}(\varphi(h_i|_{A},y,f_{e})) \right]\\ &= \lim_{i \in I} \sum_{A \in \mathcal{A}_i} \mathbb{P}(A) (\mathbb{E}[\varphi(f_{h_i|_{A}},y,f_{e})])^{q}. 
\end{align*}
Many of the justifications are similar to the previous direction. We leave this as an exercise to the reader. 
\end{proof}

\begin{corollary}[T NIP] Suppose that $\mu \in \mathfrak{M}_{x}^{\fs}(\mathcal{U},M)$ and $q \in S_{x}^{\fs}(\mathbfcal{C},\mathbfcal{M}^{\Omega})$. Then $\nu_{(s_{\mu} \otimes p)} = \mu \otimes \nu_{p}$. 
\end{corollary}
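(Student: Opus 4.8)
The plan is to deduce this directly from two results already in hand, treating the $q$ in the hypothesis as a typo for $p$, so that we have $\mu \in \mathfrak{M}_x^{\fs}(\mathcal{U},M)$ and $p \in S_x^{\fs}(\mathbfcal{C},\mathbfcal{M}^{\Omega})$ and wish to show $\nu_{s_\mu \otimes p} = \mu \otimes \nu_p$. The whole argument is a short chain: first rewrite $\nu_{s_\mu \otimes p}$ using the fact that restriction commutes with the Morley product once prob. independence is available, and then identify $\nu_{s_\mu}$ with $\mu$.

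First I would invoke the Proposition asserting that, for $\mu \in \mathfrak{M}_x^{\fs}(\mathcal{U},M)$, the extension-by-definition $s_\mu$ is prob. independent over every $b \in \mathbfcal{C}^{y}$, and hence over every finitely satisfiable type. Applying this with the type $p$ in the role of the second factor shows that $s_\mu$ is prob. independent over $p$. This is precisely the hypothesis of Proposition \ref{prop:ind} with its left factor taken to be $s_\mu$ and its right factor taken to be $p$; that proposition then yields $\nu_{s_\mu \otimes p} = \nu_{s_\mu} \otimes \nu_p$. Finally, Proposition \ref{prop:basic0} gives $\nu_{s_\mu} = \mu$, and substituting this into the previous equality produces $\nu_{s_\mu \otimes p} = \mu \otimes \nu_p$, as required.

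The only points needing verification are bookkeeping rather than substance: one checks that $s_\mu \otimes p$ is a legitimate Morley product of randomization types, which holds because $s_\mu$ is finitely satisfiable over $\mathbfcal{M}^{\Omega}$ (Proposition \ref{prop:consistent}) and therefore invariant, matching the standing assumptions under which $\otimes$ is used throughout this section; and one confirms that the prob.-independence proposition is applied with the correct factor in the correct slot. The hard part is essentially nonexistent, since all of the measure-theoretic and continuity work was already carried out in establishing both that $s_\mu$ is prob. independent over $p$ and that $\nu_{s_\mu} = \mu$; the corollary is just the composition of those two facts.
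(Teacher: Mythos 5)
Your proof is correct and follows exactly the paper's own route: establish that $s_{\mu}$ is prob.\ independent over $p$, apply Proposition \ref{prop:ind} to get $\nu_{s_{\mu}\otimes p}=\nu_{s_{\mu}}\otimes\nu_{p}$, and conclude with $\nu_{s_{\mu}}=\mu$ from Proposition \ref{prop:basic0}. You also correctly identify the $q$/$p$ typo in the statement and the finite-satisfiability bookkeeping (via Proposition \ref{prop:consistent}) that makes the application of Proposition \ref{prop:ind} legitimate.
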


\begin{proof} Follows directly from the fact that $s_{\mu}$ is prob. independent over every type, the previous proposition, and Fact \ref{prop:basic0}.
\end{proof}

\begin{remark} In practice, it is quite difficult to compute the Morley product when there are non-trivial dependencies around. Even in the case where one of the types is realized and the other is an \emph{extension-by-definition}. We consider the following computation: Let $h \in (M_{0}^{\Omega})^{y}$ with partition $\mathcal{A}$, $b \in \mathbfcal{C}^{z}$, $\mu \in \mathfrak{M}_{x}^{\inv}(\mathcal{U},M)$, where $\mu$ is Borel-definable, and $\varphi(x,y,z)$ be an $\mathcal{L}$-formula. Then we have that 
    \begin{align*}
    (\mathbb{E}[\varphi(x,y,b)])^{s_{\mu} \otimes \tp(h/\mathbfcal{C})} &= \mathbb{E}[\varphi(x,h,b)]^{s_{\mu}} \\
    &= \int_{S_{yz}(M)} F_{\mu}^{\varphi(x;yz)} d\nu_{\tp(h,b/\mathbfcal{C})} \\
    &\overset{(a)}{=} \sum_{A \in \mathcal{A}} \int_{\substack{S_{yz}(M) \\ [y = h|_{A}]}} F_{\mu}^{\varphi(x;yz)} d \left(\nu_{\tp(h,b/\mathbfcal{C})} |_{[y = h|_{A}]} \right) \\
    &\overset{(b)}{=}\sum_{A \in \mathcal{A}} \frac{\mathbb{P}_0(A)}{\mathbb{P}_0(A)}\int_{\substack{S_{yz}(M) \\ [y = h|_{A}]}} F_{\mu}^{\varphi(x;yz)} d\lambda_{A} \\
    &=\sum_{A \in \mathcal{A}} \mathbb{P}_0(A)\int_{\substack{S_{yz}(M) \\ [y = h|_{A}]}} F_{\mu}^{\varphi(x;yz)}d \left( \frac{1}{\mathbb{P}_0(A)} \lambda_{A} \right) \\
    &\overset{(c)}{=}\sum_{A \in \mathcal{A}} \mathbb{P}_0(A)\int_{\substack{S_{yz}(M) \\ [y = h|_{A}]}} \left( F_{\mu}^{\varphi(x;h|_{A},z)} \circ \pi_{z} \right) d \left( \frac{1}{\mathbb{P}_0(A)}\lambda_{A} \right) \\
    &\overset{(d)}{=}\sum_{A \in \mathcal{A}} \mathbb{P}_0(A)\int_{S_{z}(M)}  F_{\mu}^{\varphi(x;h|_{A},z)} d \left( (\pi_{z})_{*} \left( \frac{1}{\mathbb{P}_0(A)}   \lambda_{A} \right) \right).
\end{align*}

We prove the following justifications.
\begin{enumerate}[($a$)]
    \item If $\theta(x,y) = \bigvee_{A \in \mathcal{A}} y =h|_{A}$, then $\nu_{\tp(h,b/\mathbfcal{C})}(\theta(x,y)) = 1$. The sets $\{[y_i = h|_{A}]\}_{A \in \mathcal{A}}$ form a partition of $\supp(\nu_{\tp(b,h/\mathbfcal{C})})$. See Lemma \ref{lemma:support}. 
    \item For notational purposes, we let $\lambda_{A} = \left(\nu_{\tp(h,b/\mathbfcal{C})} |_{[y = h|_{A}]} \right)$. 
    \item Notice that if $r \in \supp(\frac{1}{\mathbb{P}_0(A)} \lambda_{A})$ and $(c,d) \models r(y,z)$, then $c = h|_{A}$.
    \item Standard push-forward technique. 
\end{enumerate}

The most complicated portion of the term above is the measure we are integrating with respect to, $\pi_{*}(\frac{1}{\mathbb{P}_0(A)} \lambda_{A})$. We remark that in the case where $h$ is prob. independent over $b$, it is straightforward to check that $\pi_{*}(\frac{1}{\mathbb{P}_0(A)} \lambda_{A}) = \nu_{\tp(b/\mathbfcal{C})}$, and the computation continues, resulting in $(\mathbb{E}[\varphi(x,y,b)])^{s_{(\mu \otimes \nu_{p})}}$. In the general case, it seems like this term needs to be considered on a case-by-case basis, or one needs to know more information about the fiber function, $F_{\mu}^{\varphi}$. 
\end{remark}

Finally, we observe that if $p$ is prob. independent over $q$, then the realizations of these types, as random elements, are stochastically independent. Consider the following definition. 

\begin{definition} Suppose that $p \in S_{x}(\mathbfcal{C})$ and $q  \in S_{y}(\mathbfcal{C})$. Consider $\mathbfcal{C}'$ where $\mathbfcal{C} \prec \mathbfcal{C}'$. By  \cite[Proposition 2.1.10]{andrews2015definable} (also see \cite[Theorem 3.11]{yaacov2013theories}), the model $\mathbfcal{C}'$ is isomorphic to a reduction of a nice randomization of a model $\mathcal{N}'$ (of $T$). In particular, we have that
\begin{enumerate}
    \item $\mathbfcal{C}' = (\hat{\mathcal{K}}_{\mathbfcal{C}'},\hat{\mathcal{B}}_{\mathbfcal{C}'})$.
    \item $(\Omega_{\mathbfcal{C}'},\mathcal{B}_{\mathbfcal{C}'},\mathbb{P}_{\mathbfcal{C}'})$ is a atomless probability algebra.  
    \item $\mathcal{K}_{\mathbfcal{C}'}$ is a collection of functions from $\Omega_{\mathbfcal{C}'}$ to $\mathcal{N}'$. 
    \item For every $\mathcal{L}$-formula $\varphi(x_1,...,x_n)$ and tuple $b_1,...,b_n$ from $\mathcal{K}_{\mathbfcal{C}'}$, we have that 
    \begin{equation*}
        \{t \in \Omega_{\mathbfcal{C}'}: \mathcal{N}' \models \varphi(b_1(t),...,b_n(t))\} \in \Omega_{\mathbfcal{C}'}
    \end{equation*}
    \item $\mathbfcal{C}' \models \mathbb{E}[\varphi(b_1,...,b_n)] = r \Longleftrightarrow \mathbb{P}_{\mathbfcal{C}'}(t \in \Omega_{\mathbfcal{C}'} : \mathcal{N}' \models \varphi(b_1(t),...,b_n(t))\}) = r$. 
\end{enumerate}
For any $\mathcal{L}$-formula $\varphi(x,y)$, $e \in \mathcal{U}^{y}$, and $b \in \mathbfcal{C}'$, we let 

\begin{equation*}
    [A]_{\varphi,b} = \{t \in \Omega_{\mathbfcal{C}'} : \mathcal{N}' \models  \varphi(b(t),f_e)\} \mod \mathbb{P}_{\mathbfcal{C}'}
\end{equation*}
For any $b \in \mathbfcal{C}'$ we let
\begin{equation*}
    \Sigma_{\mathbfcal{C}'}(b) = \{[A]_{\varphi,b}: \varphi \in \mathcal{L}_{x}(\mathcal{U})\}. 
\end{equation*}
Finally, we say that the types $p$ and $q$ stochastically independent if  prob. independent over $q$ if whenever $a,b \in \mathbfcal{C}'$, $a \models p$ and $b \models q$, then $\Sigma_{\mathbfcal{C}'}(a) \cap \Sigma_{\mathbfcal{C}'}(b) = \{[\emptyset],[\Omega_{\mathbfcal{C}'}]\}$. This definition is well-defined and does not depend on the choice of $a$ or $b$. 
\end{definition}

\begin{proposition} Let $p \in S_{x}^{\inv}(\mathbfcal{C},\mathbfcal{M}^{\Omega})$ and $q \in S^{\inv}_{y}(\mathbfcal{C},\mathbfcal{M}^{\Omega})$. If $p$ is prob. independent over $q$, then $p$ and $q$ are stochastically independent. 
\end{proposition}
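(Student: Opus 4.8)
The plan is to upgrade prob.\ independence of $p$ over $q$ to genuine \emph{probabilistic} independence of the two families of events $\Sigma_{\mathbfcal{C}'}(a)$ and $\Sigma_{\mathbfcal{C}'}(b)$, and then to invoke the elementary fact that two probabilistically independent Boolean subalgebras of a probability algebra can only meet in $\{[\emptyset],[\Omega_{\mathbfcal{C}'}]\}$. Concretely, fix realizations $a\models p$ and $b\models q$ in $\mathbfcal{C}'$. I would first reduce the whole statement to the single assertion that for all $\mathcal{L}(\mathcal{U})$-formulas $\varphi(x,z_1),\psi(y,z_2)$ and parameters $e_1,e_2\in\mathcal{U}$ presenting events $[A]_{\varphi,a}$ and $[A]_{\psi,b}$, one has $\mathbb{P}_{\mathbfcal{C}'}([A]_{\varphi,a}\cap[A]_{\psi,b}) = \mathbb{P}_{\mathbfcal{C}'}([A]_{\varphi,a})\cdot\mathbb{P}_{\mathbfcal{C}'}([A]_{\psi,b})$. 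Granting this product formula, if $[E]\in\Sigma_{\mathbfcal{C}'}(a)\cap\Sigma_{\mathbfcal{C}'}(b)$, write $[E]=[A]_{\varphi,a}=[A]_{\psi,b}$; applying the product formula to this very pair gives $\mathbb{P}_{\mathbfcal{C}'}([E]) = \mathbb{P}_{\mathbfcal{C}'}([E])^2$, whence $\mathbb{P}_{\mathbfcal{C}'}([E])\in\{0,1\}$, and in a probability algebra this forces $[E]=[\emptyset]$ or $[E]=[\Omega_{\mathbfcal{C}'}]$. (Note the families are Boolean subalgebras, being closed under the operations induced by $\neg,\wedge,\vee$.)

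It remains to produce the product formula, and this is where prob.\ independence enters. Apply the defining equation to $\theta(x,y,z_1z_2):=\varphi(x,z_1)\wedge\psi(y,z_2)$ with parameter $e=(e_1,e_2)$. The crucial observation is that in each summand the conjunct $\varphi(h_i|_A,e_1)$ is a \emph{constant} truth value, so $\mathbb{E}[\theta(f_{h_i|_A},b,f_e)]=\mathbf{1}_{\varphi(h_i|_A,e_1)}\cdot\mathbb{P}_{\mathbfcal{C}'}([A]_{\psi,b})$; factoring the $b$-term out of the sum leaves $\sum_{A\in\mathcal{A}_i}\mathbb{P}_0(A)\mathbf{1}_{\varphi(h_i|_A,e_1)} = \nu_{\tp(h_i/\mathbfcal{C})}(\varphi(x,e_1)) = \mathbb{E}[\varphi(h_i,f_{e_1})]$ by Fact~\ref{Fact:easy-1}, whose net-limit is $(\mathbb{E}[\varphi(x,f_{e_1})])^p = \mathbb{P}_{\mathbfcal{C}'}([A]_{\varphi,a})$ since $\tp(h_i/\mathbfcal{C})\to p$. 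Thus the right-hand side of the prob.\ independence equation collapses to exactly $\mathbb{P}_{\mathbfcal{C}'}([A]_{\varphi,a})\cdot\mathbb{P}_{\mathbfcal{C}'}([A]_{\psi,b})$, while its left-hand side $(\mathbb{E}[\theta(x,b,f_e)])^p$ equals $\mathbb{E}[\theta(a,b,f_e)] = \mathbb{P}_{\mathbfcal{C}'}([A]_{\varphi,a}\cap[A]_{\psi,b})$ for the appropriate realization $a$, giving the product formula.

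The main obstacle is purely bookkeeping about \emph{which} realization $a$ is used: the prob.\ independence equation is phrased over $\mathbfcal{C}$ for a witness $b_0\models q|_{\mathbfcal{M}^{\Omega}}$ lying in $\mathbfcal{C}$, whereas stochastic independence quantifies over $a,b\in\mathbfcal{C}'$ realizing the global types, so the joint restriction $\tp(ab/\mathbfcal{U}^{\Omega})$ must be pinned down. I would bridge this using invariance: choose the witness $b_0\in\mathbfcal{C}$ with $b_0\models q|_{\mathbfcal{U}^{\Omega}}$ (available by saturation of $\mathbfcal{C}$ and the ``some/any'' clause) and realize $a$ as the coheir extension of $p$ over $\mathbfcal{C} b$, so that $(\mathbb{E}[\theta(x,b,f_e)])^p$ agrees with the value computed against $b_0$; here $\mathbfcal{M}^{\Omega}$-invariance of $p$ forces agreement on the relevant formulas, and the theorem of the previous section (any net from $M_0^{\Omega}$ converging to $p$ witnesses prob.\ independence) makes the factorization available for this net. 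Once the realization is fixed in this canonical way the product formula holds for $a,b$, and the self-intersection argument of the first paragraph closes the proof; the remaining verifications are routine and of the same flavour as Proposition~\ref{prop:ind}.
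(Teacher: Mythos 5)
Your proof is correct, and it reaches the paper's conclusion by a genuinely different (and in one respect more elementary) route. The paper argues by contraposition and leans on Proposition \ref{prop:ind}: a non-trivial common event $[A]$, written as $[A]_{\varphi_1,a}=[A]_{\varphi_2,b}$, forces $\nu_{p\otimes q}(\varphi_1(x,e_1)\wedge\varphi_2(y,e_2))=\mathbb{P}_{\mathbfcal{C}'}(A)$ while $(\nu_p\otimes\nu_q)(\varphi_1(x,e_1)\wedge\varphi_2(y,e_2))=\mathbb{P}_{\mathbfcal{C}'}(A)\cdot\mathbb{P}_{\mathbfcal{C}'}(A)$, and since $\mathbb{P}_{\mathbfcal{C}'}(A)\notin\{0,1\}$ this contradicts the equality $\nu_{p\otimes q}=\nu_p\otimes\nu_q$ that Proposition \ref{prop:ind} extracts from prob.\ independence. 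You instead prove the rectangle product formula directly from the \emph{definition} of prob.\ independence (constancy of the conjunct $\varphi(f_{h_i|_A},f_{e_1})$ on each piece of the partition, Fact \ref{Fact:easy-1}, and convergence of the witnessing net), and then run the same squaring trick $\mathbb{P}_{\mathbfcal{C}'}([E])=\mathbb{P}_{\mathbfcal{C}'}([E])^2$. So the two arguments share their punchline, but your derivation of the product formula bypasses the Morley product entirely and hence does not use NIP, whereas the paper's route does (Proposition \ref{prop:ind} rests on left-continuity of $-\otimes\lambda$); in effect you inline the special rectangle case of Proposition \ref{prop:ind} and observe that this case is elementary. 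What the paper's route buys is brevity, being a short corollary of machinery already established; what yours buys is self-containedness and the information that this implication is NIP-free. Finally, both arguments face the same realization-bookkeeping issue: stochastic independence quantifies over \emph{all} pairs $a\models p$, $b\models q$ in $\mathbfcal{C}'$, whose joint type is not determined, and each proof really verifies the conclusion for one canonical pair --- the paper implicitly takes $(a,b)\models p\otimes q$ in its second displayed computation, while you take $b\models q$ and $a$ realizing the invariant extension of $p$ over $\mathbfcal{C}b$ --- after which one must invoke the well-definedness clause asserted (without proof) in the paper's definition; you are, if anything, more explicit about this point than the paper is.
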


\begin{proof} We prove by contraposition. Suppose that $p$ is not stochastically independent over $q$. Then there exists some non-trivial $[A] \in \Sigma_{\mathbfcal{C}'}(a) \cap \Sigma_{\mathbfcal{C}'}(b)$. Then there exists $\mathcal{L}(\mathcal{U})$-formula $\varphi_1(x,e_1)$ and $\varphi_2(x,e_2)$ such that 
\begin{equation*}
    [A] = [\{t \in \Omega_{\mathbfcal{C}'}: \mathcal{N}' \models \varphi_1(a(t),f_{e_1})],
\end{equation*}
and, 
\begin{equation*}
    [A] = [\{t \in \Omega_{\mathbfcal{C}'}: \mathcal{N}' \models \varphi_2(b(t),f_{e_2})].
\end{equation*}
Now notice that 
\begin{align*}
    (\nu_{p} \otimes \nu_{q})(\varphi_1(x,e_1) \wedge \varphi_2(y,e_2)) &= \nu_{p}(\varphi_1(x,e_1)) \cdot \nu_{q}(\varphi_2(y,e_2)) \\ &= \mathbb{P}_{\mathbfcal{C}'}(A) \cdot \mathbb{P}_{\mathbfcal{C}'}(A) \\ & = \mathbb{P}_{\mathbfcal{C}'}(A)^{2}.   
\end{align*}
However, 
\begin{align*}
    \nu_{p \otimes q}(\varphi_1(x,e_1) \wedge \varphi_2(y,e_2)) &= (\mathbb{E}[\varphi_1(x,f_{e_1}) \wedge \varphi_2(y,f_{e_2})])^{p\otimes q} \\ &= \mathbb{E}[\varphi_1(a,f_{e_1}) \wedge \varphi_2(b,f_{e_2})]) \\
    &=\mathbb{P}_{\mathbfcal{C}'} \Big(\{t \in \Omega_{\mathbfcal{C}'} : \mathcal{N}' \models \varphi_1(a(t),e_1) \wedge \varphi_2(b(t),e_2)\}\Big) \\
    &=\mathbb{P}_{\mathbfcal{C}'} \Big(\{t \in \Omega_{\mathbfcal{C}'} : \mathcal{N}' \models \varphi_1(a(t),e_1) \}\Big) \\
    &=\mathbb{P}_{\mathbfcal{C}'}(A). 
\end{align*}
Since $[A] \not \in \{[\emptyset],[\Omega_{\mathbfcal{C}'}]\}$, we have that $\mathbb{P}_{\mathbfcal{C}'}(A) \not \in \{0,1\}$. Hence $\mathbb{P}_{\mathbfcal{C}'}(A) \neq \mathbb{P}_{\mathbfcal{C}'}(A)^{2}$ and so $\nu_{p \otimes q} \neq \nu_{p} \otimes \nu_{q}$. 
\end{proof}

\section{Ellis semigroup of randomization}

In this last section, we make some comments concerning the Ellis semigroup of the randomization. We show that the maps $s_{-}$ is an injective imbedding from the convolution algebra of Keisler measures to semigroup of types over the randomization (under the Newelski product). We then show that if our group is definably amenable, then any minimal left ideal of the Ellis semigroup of finitely satisfiable types is a single point. Finally, we observe that if $G$ is not definably amenable, then the minimal left ideal in $(S_{x}^{\fs}(\mathbfcal{C},G^{\Omega}),*)$ is more complicated than the minimal left ideals of $(\mathfrak{M}^{\fs}_{x}(\mathcal{G},G),*)$. We recall the standard set up:

\begin{definition} Let $T$ be a first order theory expanding a group. Let $G \models T$ and $\mathcal{G}$ be a monster model such that $G \prec \mathcal{G}$. Again, $G^{\Omega} \prec \mathcal{G}^{\Omega} \prec \mathbfcal{C}$. We recall the construction of the \emph{Newelski product}: Given $p,q \in S_{x}^{\fs}(\mathbfcal{C},G^{\Omega})$, any $\mathcal{L}$-formula $\varphi(x,\bar{z})$, and $b \in \mathbfcal{C}^{z}$, we have that
\begin{equation*}
    \mathbb{E}[\varphi(x,b))]^{p*q} = \mathbb{E}[\varphi(x \cdot y,b))]^{p\otimes q}. 
\end{equation*}
We claim that if $p,q \in S_{x}^{\dagger}(\mathbfcal{C},G^{\Omega})$, then $p * q \in S_{x}^{\fs}(\mathbfcal{C},G^{\Omega})$.
\end{definition}

We now quickly recall the definition of definable convolution. 

\begin{definition}[T is NIP] Fix $\mu,\nu \in \mathfrak{M}_{x}^{\fs}(\mathcal{G},G)$. The convolution product of $\mu$ and $\nu$ is the unique measure $\mu * \nu$ in $\mathfrak{M}_{x}^{\fs}(\mathcal{G},G)$ such that for any $\mathcal{L}$-formula $\varphi(x,\bar{z})$ and parameter $e \in \mathcal{G}^{z}$, we let 
\begin{equation*}
    (\mu * \nu)(\varphi(x,e)) = (\mu_{x} \otimes \nu_{y})(\varphi(x \cdot y,e)). 
\end{equation*}
We recall that if $\mu,\nu \in \mathfrak{M}_{x}^{\fs}(\mathcal{G},G^{\Omega})$, then $\mu * \nu \in \mathfrak{M}_{x}^{\fs}(\mathbfcal{C},G^{\Omega})$. 
\end{definition}

\begin{proposition}[T NIP] The map $s_{-}:(\mathfrak{M}_{x}^{\fs}(\mathcal{U},G),*) \to (S_{x}^{\fs}(\mathbfcal{C},G^{\Omega}),*)$ is an injective embedding of semigroups.
\end{proposition}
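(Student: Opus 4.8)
The plan is to verify that $s_-$ is an injective semigroup homomorphism (in fact a topological embedding), splitting the work into injectivity together with the embedding property, and the homomorphism property; well-definedness of the codomain is already Proposition \ref{prop:consistent}.

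\emph{Injectivity and the topological embedding.} The cleanest route is to exhibit a continuous left inverse. By Proposition \ref{prop:basic0} we have $\nu_{s_\mu} = \mu$ for every $\mu \in \mathfrak{M}_x^{\fs}(\mathcal{U},G)$, so $\nu_- \circ s_- = \mathrm{id}$ on $\mathfrak{M}_x^{\fs}(\mathcal{U},G)$. In particular $s_\mu = s_\nu$ forces $\mu = \nu_{s_\mu} = \nu_{s_\nu} = \nu$, giving injectivity. Since $s_-$ is continuous (Proposition \ref{prop:cont-s}) and the inverse on its image is the restriction of the continuous map $\nu_-$ (Proposition \ref{prop:cont-v}), the map $s_-$ is a homeomorphism onto its image, i.e.\ a topological embedding.

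\emph{The homomorphism property.} It remains to show $s_{\mu * \nu} = s_\mu * s_\nu$ for $\mu,\nu \in \mathfrak{M}_x^{\fs}(\mathcal{U},G)$; by quantifier elimination (Remark \ref{remark:tools}) it suffices to check equality of the values $\mathbb{E}[\varphi(x,b)]$ for every $\mathcal{L}$-formula $\varphi(x,z)$ and $b \in \mathbfcal{C}^z$. I would unwind the Newelski product and then invoke the going-up theorem: writing $\psi(x,y,z) := \varphi(x \cdot y, z)$, we get
\begin{align*}
\mathbb{E}[\varphi(x,b)]^{s_\mu * s_\nu} = \mathbb{E}[\psi(x,y,b)]^{s_\mu \otimes s_\nu} = \mathbb{E}[\psi(x,y,b)]^{s_{\mu \otimes \nu}},
\end{align*}
where the first equality is the definition of $*$ on types and the second is Theorem \ref{prop:main1} (with the second factor regarded in a fresh variable $y$). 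Unwinding the definition of $s_{\mu \otimes \nu}$ turns the right-hand side into $\int_{S_z(G)} F_{\mu \otimes \nu}^{\psi}\, d\nu_{\tp(b/\mathbfcal{C})}$, and the key observation is that $F_{\mu \otimes \nu}^{\psi} = F_{\mu * \nu}^{\varphi}$ as Borel functions on $S_z(G)$: for $e \models r$ one has $F_{\mu \otimes \nu}^{\psi}(r) = (\mu \otimes \nu)(\varphi(x \cdot y, e)) = (\mu * \nu)(\varphi(x,e)) = F_{\mu * \nu}^{\varphi}(r)$, directly from the definition of convolution. Hence the integral equals $\mathbb{E}[\varphi(x,b)]^{s_{\mu * \nu}}$, which closes the computation.

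\emph{Main obstacle.} The only genuinely delicate point is the bookkeeping that ties the three products (convolution of measures, Newelski product of types, and Morley product) together: one must match the variable conventions so that the substitution $\psi(x,y,z) = \varphi(x \cdot y, z)$ reproduces \emph{exactly} the fiber function of the convolution, and one should confirm that all objects lie in the correct domains. Concretely, $\mu \otimes \nu$ and $\mu * \nu$ must be finitely satisfiable in $G$ (so that $s_{\mu \otimes \nu}$ and $s_{\mu * \nu}$ are defined, and the well-definedness built into Theorem \ref{prop:main1} applies), while $s_\mu$ and $s_\nu$ must be finitely satisfiable in $G^{\Omega}$ (Proposition \ref{prop:consistent}) so that $s_\mu \otimes s_\nu$ makes sense. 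Once these domain checks are in place, the homomorphism property reduces entirely to the already-established commutation of $s_-$ with the Morley product.
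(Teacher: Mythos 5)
Your proposal is correct and follows essentially the same route as the paper: both reduce the homomorphism property via quantifier elimination to the chain of equalities linking $s_{\mu*\nu}$ and $s_\mu * s_\nu$ through the fiber-function identity $F_{\mu*\nu}^{\varphi} = F_{\mu\otimes\nu}^{\varphi(x\cdot y,z)}$ and Theorem \ref{prop:main1}, with your computation merely running in the reverse direction. Your explicit treatment of injectivity and of the topological embedding via the continuous left inverse $\nu_-$ (Propositions \ref{prop:basic0} and \ref{prop:cont-v}) spells out what the paper dismisses as ``clearly injective and continuous,'' but this is a presentational difference, not a different argument.
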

\begin{proof}
    The map $s_{-}$ is clearly injective and continuous by Proposition \ref{prop:cont-s}. We show our map is a homomorphism. Fix an $\mathcal{L}$-formula $\varphi(x,\bar{z})$. Let $\varphi'(x;y,\bar{z}) = \varphi(x \cdot y, \bar{z})$ and fix $b \in \mathbfcal{C}^{z}$. Now notice that 
    \begin{align*}
        (\mathbb{E}[\varphi(x,b)])^{s_{\mu * \nu}} &= \int_{S_{y}(M)} F_{\mu * \nu}^{\varphi} d\nu_{\tp(b/\mathbfcal{C})} \\ &= \int_{S_{y}(M)} F_{\mu \otimes \nu}^{\varphi'} d\nu_{\tp(b/\mathbfcal{C})} \\ &= (\mathbb{E}[\varphi(x \cdot y,b)])^{s_{\mu \otimes \nu}} \\
        &\overset{(*)}{=} (\mathbb{E}[\varphi(x \cdot y,b)])^{s_{\mu} \otimes s_{\nu}} \\ &= (\mathbb{E}[\varphi(x,b)])^{s_{\mu} * s_{\nu}},
    \end{align*}
where Equation $(*)$ holds from Theorem \ref{prop:main1}. By quantifier elimination, the statement holds. 
\end{proof}

Our first proposition shows that if $G$ is NIP definably amenable, then any minimal left ideal of $S_{x}^{\fs}(\mathbfcal{C},G)$ is a single point. This observation is an extension of \cite[Proposition 4.18]{berenstein2021definable} -- if an NIP group $G$ is definably amenable, then $G^{\Omega}$ is extremely definably amenable. The general idea of the argument is similar and our proof relies on this work these modified to the appropriate setting. We also remark that this statement in known outside of the NIP case (i.e., if $G$ is definably amenabable, then $G^{\Omega}$ is extremely definably amenable \cite[Theorem 6.3]{carmona2022definably}). 

\begin{lemma}\label{lemma:support} Suppose that $h \in (M^{\Omega}_0)^{y}$ and $b \in \mathbfcal{C}^{z}$. Let $\mathcal{A}$ be a partition for $h$. Then for every $r \in \supp(\nu_{\tp(h,b/\mathbfcal{C})})$, if $(a,c) \models r(y,z)$ then there exists some $A \in \mathcal{A}$ such that $a = h|_{A}$. 
\end{lemma}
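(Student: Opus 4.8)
The plan is to reduce the statement to a single formula having full measure, after which the description of the support finishes the argument. Set $\theta(y) := \bigvee_{A \in \mathcal{A}} (y = h|_A)$; note that since $h \in (M_0^\Omega)^y$ takes values in $M \prec \mathcal{U}$, each constant $h|_A$ lies in $M$, so $\theta$ is a genuine $\mathcal{L}$-formula (a finite disjunction, as $\mathcal{A}$ is finite) with parameters from $M$. First I would unwind the definition of $\nu_{-}$ on $\theta$, using that $\nu_{\tp(h,b/\mathbfcal{C})}(\chi(y,z)) = (\mathbb{E}[\chi(y,z)])^{\tp(h,b/\mathbfcal{C})} = \mathbb{E}[\chi(h,b)]$ for every $\mathcal{L}$-formula $\chi$, together with the definition of $[\cdot]$ and $\mathbb{E}$:
\[
\nu_{\tp(h,b/\mathbfcal{C})}(\theta(y)) = \mathbb{E}\Big[\bigvee_{A \in \mathcal{A}} h = f_{h|_A}\Big] = \mathbb{P}_0\Big(\Big\{ t \in \Omega_0 : \textstyle\bigvee_{A \in \mathcal{A}} h(t) = h|_A \Big\}\Big).
\]
Because $\mathcal{A}$ is a partition for $h$, every $t \in \Omega_0$ lies in some $A \in \mathcal{A}$, and then $h(t) = h|_A$ by definition of $h|_A$; hence the event is all of $\Omega_0$ and the measure equals $1$.

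Next I would invoke the standard description of the support of a Keisler measure: $r \in \supp(\nu)$ if and only if every formula in $r$ has positive $\nu$-measure, equivalently the clopen set cut out by any measure-zero formula is disjoint from $\supp(\nu)$. Since $\nu_{\tp(h,b/\mathbfcal{C})}(\theta(y)) = 1$, the negation $\neg\theta(y) = \bigwedge_{A \in \mathcal{A}}(y \neq h|_A)$ has measure $0$, so no $r \in \supp(\nu_{\tp(h,b/\mathbfcal{C})})$ can contain it. As each such $r$ is a complete type over $\mathcal{U}$, it must then contain $\theta(y)$. Consequently, whenever $(a,c) \models r(y,z)$ we have $\bigvee_{A \in \mathcal{A}}(a = h|_A)$, i.e. $a = h|_A$ for some $A \in \mathcal{A}$, which is exactly the claim.

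I do not expect a serious obstacle here: the entire content is the computation that $\theta$ has full measure, combined with the support characterization. The only points demanding care are bookkeeping ones. The variable $z$ attached to $b$ plays no role in $\theta$, so I must keep $r$ complete over the correct type space (either $S_{yz}(\mathcal{U})$, matching $\nu_{\tp(h,b/\mathbfcal{C})} \in \mathfrak{M}_{yz}(\mathcal{U})$, or its restriction to $S_{yz}(M)$ as used in the application) when reading off the realization; in either case the argument is identical since the parameters $h|_A$ lie in $M$. I would also make sure to state explicitly that $\mathcal{A}$ being finite is what guarantees $\theta$ is a first-order formula rather than an infinitary disjunction.
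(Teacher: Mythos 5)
Your proof is correct and follows essentially the same route as the paper's: both define $\theta(y) := \bigvee_{A \in \mathcal{A}} (y = h|_{A})$, show it has $\nu_{\tp(h,b/\mathbfcal{C})}$-measure $1$, and conclude via the standard characterization of the support. The only cosmetic difference is that you compute the measure of $\theta$ directly from the randomization semantics, whereas the paper derives it from Fact \ref{Fact:easy-1} together with the pushforward under the projection $\pi_{y}$; both computations are valid.
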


\begin{proof} By Fact \ref{Fact:easy-1}, $\nu_{\tp(h/\mathbfcal{C})} = \sum_{A \in \mathcal{A}} \mathbb{P}_0(A) \delta_{h|_{A}}$. Let $\theta(y) : = \bigvee_{A \in \mathcal{A}} (y = h|_{A})$. Notice that 
\begin{align*}
    1 &= \sum_{A \in \mathcal{A}} \mathbb{P}(\theta(y)) \\ 
    &= \nu_{\tp(h/\mathbfcal{C})}(\theta(y)) \\
    &= (\pi_{y})_{*}(\nu_{\tp(h,b/\mathbfcal{C})})(\theta(y)) \\ 
    &=\nu_{\tp(h,b/\mathbfcal{C})}(\theta(y) \wedge z=z). 
\end{align*}
Since the set $\theta(y) \wedge z=z$ has measure 1, it follows that every type in the support of $\nu_{\tp(h,b/\mathbfcal{C})}$ contains the formula $\theta(y) \wedge x =x$. This implies the intended result. 
\end{proof}

\begin{proposition}[T NIP] Suppose that $G$ is definably amenable. Fix $\mu \in \mathfrak{M}_{x}^{\fs}(\mathcal{G},G)$ such that $\mu$ is left-$G$-invariant (these exist by \cite[Theorem 3.17]{chernikov2014external}). Then for any $p \in S_{x}^{\fs}(\mathbfcal{C},G^{\Omega})$ we have that 
\begin{equation*}
    p * s_{\mu} = s_{\mu}. 
\end{equation*}
Thus $\{s_{\mu}\}$ is a minimal left ideal. As consequence $s_{\mu}|_{G^{\Omega}}$ is left-$G^{\Omega}$-invariant which implies $G^{\Omega}$ is extremely definably amenable. 
\end{proposition}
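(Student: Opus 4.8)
The plan is to check the identity $p * s_\mu = s_\mu$ on every atomic $\mathcal{L}^R$-formula $\mathbb{E}[\varphi(x,b)]$, which suffices by quantifier elimination (Remark \ref{remark:tools}). Writing $\psi(x,y,z) = \varphi(x \cdot y, z)$ and unwinding the Newelski product, I would first note that $p$ is finitely satisfiable, hence invariant, so the Morley product $p \otimes s_\mu$ is computed exactly as in Theorem \ref{prop:main1} by realizing the right factor: choosing $c \models s_\mu|_{\mathbfcal{U}^\Omega b}$,
\[
(\mathbb{E}[\varphi(x,b)])^{p * s_\mu} = (\mathbb{E}[\psi(x,y,b)])^{p \otimes s_\mu} = (\mathbb{E}[\varphi(x \cdot c, b)])^{p}.
\]
Since $p$ is finitely satisfiable in $G^\Omega$, there is a net $(h_j)_j$ in $(G_0^\Omega)^x$ with $\lim_j \tp(h_j/\mathbfcal{C}) = p$, and continuity of $p' \mapsto (\mathbb{E}[\varphi(x \cdot c,b)])^{p'}$ reduces the computation to $\lim_j \mathbb{E}[\varphi(h_j \cdot c, b)]$.

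The crux is to show that each term $\mathbb{E}[\varphi(h_j \cdot c, b)]$ already equals $(\mathbb{E}[\varphi(x,b)])^{s_\mu}$, so that the limit is trivial. As $h_j \in \mathbfcal{U}^\Omega$, $b$ is a parameter, and $c \models s_\mu|_{\mathbfcal{U}^\Omega b}$, I can read this term off $s_\mu$: $\mathbb{E}[\varphi(h_j \cdot c, b)] = (\mathbb{E}[\varphi(h_j \cdot x, b)])^{s_\mu}$. Now I unfold the right-hand side through the defining integral of $s_\mu$, with $\Phi(x,y,z) = \varphi(y \cdot x, z)$, obtaining $\int F_\mu^{\Phi}\, d\nu_{\tp(h_j, b/\mathbfcal{C})}$. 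This is where left-invariance enters. By Fact \ref{Fact:easy-1} the $y$-marginal of $\nu_{\tp(h_j,b/\mathbfcal{C})}$ is $\sum_A \mathbb{P}_0(A)\delta_{h_j|_A}$, concentrated on the finitely many constants $h_j|_A \in G$ (for $A$ in a partition for $h_j$), and by Lemma \ref{lemma:support} the support of $\nu_{\tp(h_j,b/\mathbfcal{C})}$ meets only the fibers $[y = h_j|_A]$. On such a fiber $F_\mu^{\Phi}(\tp(h_j|_A, b'/G)) = \mu(\varphi(h_j|_A \cdot x, b')) = \mu(\varphi(x, b')) = F_\mu^{\varphi}(\tp(b'/G))$, the middle equality being precisely left-$G$-invariance of $\mu$ applied to the standard-group element $h_j|_A \in G$. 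Thus $F_\mu^\Phi$ coincides on the support with $F_\mu^\varphi \circ \pi_z$, and pushing forward to the $z$-coordinate, where $(\pi_z)_* \nu_{\tp(h_j,b/\mathbfcal{C})} = \nu_{\tp(b/\mathbfcal{C})}$, gives $(\mathbb{E}[\varphi(h_j \cdot x, b)])^{s_\mu} = \int F_\mu^\varphi\, d\nu_{\tp(b/\mathbfcal{C})} = (\mathbb{E}[\varphi(x,b)])^{s_\mu}$. Passing to the limit in $j$ and invoking quantifier elimination yields $p * s_\mu = s_\mu$.

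For the remaining assertions I would argue as follows. By Proposition \ref{prop:consistent}, $s_\mu \in S_x^{\fs}(\mathbfcal{C},G^\Omega)$; since $p * s_\mu = s_\mu$ for every $p$, the singleton $\{s_\mu\}$ is a left ideal, hence a minimal (one-point) left ideal, and taking $p = s_\mu$ shows $s_\mu$ is idempotent. To extract left-$G^\Omega$-invariance of $s_\mu|_{G^\Omega}$, I would specialize to $p = \tp(g/\mathbfcal{C})$ for $g \in G^\Omega$, which is realized in $G^\Omega$ and so finitely satisfiable over it. Running $p * s_\mu = s_\mu$ through the first display with this realized $p$ gives $(\mathbb{E}[\varphi(g \cdot x, d)])^{s_\mu} = (\mathbb{E}[\varphi(x, d)])^{s_\mu}$ for all $g, d \in G^\Omega$, which is exactly invariance of $s_\mu|_{G^\Omega}$ under left translation by $G^\Omega$; extreme definable amenability of $G^\Omega$ then follows as in \cite[Proposition 4.18]{berenstein2021definable}.

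The step I expect to be the main obstacle is the crux identity $(\mathbb{E}[\varphi(h_j \cdot x, b)])^{s_\mu} = (\mathbb{E}[\varphi(x,b)])^{s_\mu}$: left translation by a \emph{random} element $h_j$ fixes $s_\mu$ only because one can decompose along a partition for $h_j$ and apply left-invariance of $\mu$ fiberwise, and this is legitimate exactly because $p$ is finitely satisfiable over the \emph{standard} group $G^\Omega$, forcing the partition values $h_j|_A$ into $G$, where $\mu$ is invariant. This is the one place where the finite-satisfiability of $p$ over $G^\Omega$ and the (NIP-based) structure of $s_\mu$ must be used in tandem.
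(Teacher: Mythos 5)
Your proposal is correct and follows essentially the same route as the paper's proof: approximate $p$ by a net $(h_j)$ from $G_0^{\Omega}$, reduce to the single identity $(\mathbb{E}[\varphi(h_j \cdot x, b)])^{s_{\mu}} = (\mathbb{E}[\varphi(x,b)])^{s_{\mu}}$, and establish it by combining Lemma \ref{lemma:support} with fiberwise left-$G$-invariance of $\mu$ (legitimate because the partition values $h_j|_{A}$ lie in $G$) and the push-forward $(\pi_{z})_{*}\nu_{\tp(h_j,b/\mathbfcal{C})} = \nu_{\tp(b/\mathbfcal{C})}$, which is exactly the paper's equation $(*)$. The only difference is presentational: you justify the initial reduction by realizing $c \models s_{\mu}|_{\mathbfcal{U}^{\Omega}b}$ and invoking $\mathbfcal{U}^{\Omega}$-invariance of $p$, where the paper passes directly to the limit $\lim_{i}(\mathbb{E}[\varphi(h_i \cdot x,b)])^{s_{\mu}}$ without comment, and you spell out the closing assertions (minimality of $\{s_{\mu}\}$ and left-$G^{\Omega}$-invariance of $s_{\mu}|_{G^{\Omega}}$ via realized $p = \tp(g/\mathbfcal{C})$) that the paper merely states.
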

\begin{proof}
Fix an $\mathcal{L}$-formula $\varphi(x,z)$ and an element $b \in \mathbfcal{C}^{z}$.  Choose $(h_i)_{i \in I}$ such that $h_i \in M_0^{\Omega}$ and $\lim_{i \in I} \tp(h_i/\mathbfcal{C}) = p$. Let $\varphi^{+}(x;y,z) := \varphi(y \cdot x, z)$
\begin{align*}
    (\mathbb{E}[\varphi(x,b)])^{ p * s_{\mu} } &= \lim_{i \in I} (\mathbb{E}[\varphi(h_i \cdot x,b)])^{s_{\mu} }\\
    &= \lim_{i \in I} \int_{S_{yz}(M)} F_{\mu}^{\varphi^{+}} d\nu_{\tp(h_i,b/\mathbfcal{C})}\\
    &\overset{(*)}{=} \lim_{i \in I} \int_{S_{yz}(M)} \left( F_{\mu}^{\varphi} \circ \pi_{z} \right)d\nu_{\tp(h_i,b/\mathbfcal{C})}  \\
    &= \lim_{i \in I} \int_{S_{z}(M)} F_{\mu}^{\varphi}d \left( (\pi_{z})_{*}(\nu_{\tp(h_i,b/\mathbfcal{C})})\right)\\
    &=\lim_{i \in I} \int_{S_{z}(M)} F_{\mu}^{\varphi} d\nu_{\tp(b/\mathbfcal{C})}\\
    &=\int_{S_{z}(M)} F_{\mu}^{\varphi} d\nu_{\tp(b/\mathbfcal{C})}\\
    &=(\mathbb{E}[\varphi(x,b)])^{s_{\mu}}
\end{align*}
We justify Equation $(*)$. 
Notice that if $r \in \supp(\nu_{\tp(h_i,b/\mathbfcal{C})})$ and $(a,c) \models r$ then $a \in G$ by Lemma \ref{lemma:support}. Notice that since $\mu$ is left-$G$-invariant, we have 
\begin{equation*}
    F_{\mu}^{\varphi^{+}}(r) = \mu(\varphi(a \cdot x, b)) = \mu(\varphi(x, b)) = \left(F_{\mu}^{\varphi} \circ \pi_{z}\right) (r). \qedhere 
\end{equation*}
\end{proof}

The following proposition is a sanity check. 

\begin{proposition}[T NIP] Suppose that $G$ is not definably amenable. Then no minimal left ideal of $S_{x}^{\fs}(\mathbfcal{C},G^{\Omega})$ is not a singleton. In particular, $G^{\Omega}$ is not extremely amenable.
\end{proposition}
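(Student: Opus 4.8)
The plan is to argue by contradiction, converting a singleton minimal left ideal into a left-invariant Keisler measure on $G$ and contradicting the failure of definable amenability. Suppose some minimal left ideal of $(S_{x}^{\fs}(\mathbfcal{C},G^{\Omega}),*)$ is a singleton $\{p\}$. A singleton set that is a left ideal satisfies $S_{x}^{\fs}(\mathbfcal{C},G^{\Omega})*p=\{p\}$, i.e. $q*p=p$ for every $q\in S_{x}^{\fs}(\mathbfcal{C},G^{\Omega})$; I would specialize this to the realized types $q=\tp(f_{a}/\mathbfcal{C})$ with $a\in G$, which genuinely lie in the semigroup since $f_{a}\in G^{\Omega}$. (Note that I never need the existence of minimal left ideals: it suffices to rule out singletons, since any singleton left ideal is automatically minimal.)

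The key step unwinds the Newelski product against a realized left factor. For $a\in G$, $c\in\mathcal{G}$, and an $\mathcal{L}$-formula $\varphi(x,z)$, the definition of $*$ together with the fact that the left factor is realized by the constant random variable $f_{a}$ gives
\[
\mathbb{E}[\varphi(x,f_{c})]^{\tp(f_{a}/\mathbfcal{C})*p}=\mathbb{E}[\varphi(x\cdot y,f_{c})]^{\tp(f_{a}/\mathbfcal{C})\otimes p}=\mathbb{E}[\varphi(f_{a}\cdot x,f_{c})]^{p}=\nu_{p}(\varphi(a\cdot x,c)).
\]
Applying $\tp(f_{a}/\mathbfcal{C})*p=p$ to the far left-hand side yields $\nu_{p}(\varphi(a\cdot x,c))=\nu_{p}(\varphi(x,c))$, so $\nu_{p}$ is invariant under left translation by $G$. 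By the earlier proposition that $\nu_{-}$ preserves finite satisfiability, $\nu_{p}\in\mathfrak{M}_{x}^{\fs}(\mathcal{G},G)$. Thus $\nu_{p}$ restricts to a $G$-invariant Keisler measure over $G$, which witnesses that $G$ is definably amenable -- precisely the kind of object that \cite[Theorem 3.17]{chernikov2014external} supplies in the preceding proposition for the amenable direction -- contradicting the hypothesis. Hence no minimal left ideal is a singleton. For the final clause: if $G^{\Omega}$ were extremely amenable it would carry a left-$G^{\Omega}$-invariant type $p\in S_{x}^{\fs}(\mathbfcal{C},G^{\Omega})$, in particular $\tp(f_{a}/\mathbfcal{C})*p=p$ for all $a\in G$, and the same computation produces a $G$-invariant finitely satisfiable $\nu_{p}$, again contradicting non-amenability.

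The main obstacle is matching the invariance one actually extracts with the definition of definable amenability. The computation only delivers invariance of $\nu_{p}$ under translation by the small group $G$ (equivalently by the constants $f_{a}$), and this restriction is essential: for $h\in\mathcal{G}\setminus G$ the type $\tp(f_{h}/\mathbfcal{C})$ is not finitely satisfiable over $G^{\Omega}$ and so never enters the semigroup, so no genuinely ``new'' translations are available. One must therefore use the characterization of definable amenability by the existence of a $G$-invariant Keisler measure over a model -- equivalently, an invariant measure for the action $G(G)\curvearrowright S_{G}(G)$, a property that is model-independent -- which is exactly the asymmetry already present in the preceding proposition (where only small-group invariance was used, with \cite[Theorem 3.17]{chernikov2014external} invoked for existence). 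I would deliberately \emph{not} try to upgrade to full $\mathcal{G}$-invariance by a continuity/density argument, since finitely satisfiable measures are in general only Borel-definable rather than definable and the relevant fiber maps are not continuous. The remaining ingredients -- that a singleton left ideal forces $q*p=p$ for all $q$, the collapse of the Newelski product against a realized factor, and the preservation of finite satisfiability under $\nu_{-}$ -- are routine given the results already established.
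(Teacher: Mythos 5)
Your proposal is correct and follows essentially the same route as the paper's (very brief, two-sentence) proof: assume a singleton left ideal $\{p\}$, so $q*p=p$ for all $q$, apply this to the realized types $\tp(f_{a}/\mathbfcal{C})$ with $a\in G$, deduce that $\nu_{p}|_{G}$ is left-$G$-invariant, and conclude that $G$ is definably amenable, a contradiction. The paper leaves the Newelski-product computation against a realized left factor and the model-independence of definable amenability implicit; your write-up supplies exactly those details (and the treatment of the extreme amenability clause, which the paper omits), so the two arguments coincide in substance.
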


\begin{proof} Suppose there exists some $q \in S_{x}^{\fs}(\mathbfcal{C},G^{\Omega})$ such that for any $p  \in S_{x}^{\fs}(\mathbfcal{C},G^{\Omega})$, $p * q = q$. We claim that this implies $\nu_{q}|_{G}$ is $G$-left-invariant and so $G$ is definably amenable, a contradiction. 
\end{proof}

Our final proposition demonstrates that if $G$ is not definably amenable, then the minimal left ideal of $S^{\fs}(\mathbfcal{C},G^{\Omega})$ is not $\{s_{\mu}: \mu \in I\}$ where $I$ is a minimal left ideal in $(\mathfrak{M}_{x}^{\fs}(\mathcal{G},G),*)$. To be honest, I thought it was going to work out nicely and $\{s_{\mu}:\mu \in I\}$ would be a minimal ideal. As they say, \emph{you win some, you lose some.}

\begin{proposition}[T NIP] Suppose that $G$ is not definably amenable. Let $I \subseteq (\mathfrak{M}_{x}^{\fs}(\mathcal{G},G),*)$ be a minimal left ideal. We claim that $\{s_{\mu}: \mu \in I\}$ is \textbf{not} a left ideal of $(S_{x}^{\fs}(\mathbfcal{C},G^{\Omega}),*)$. 
\end{proposition}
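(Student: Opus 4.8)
The plan is to argue by contradiction: suppose that $\{s_{\mu} : \mu \in I\}$ is a left ideal of $(S_{x}^{\fs}(\mathbfcal{C},G^{\Omega}),*)$. Since $s_{-}$ is an injective embedding of semigroups (by the proposition above) and $I$ is a minimal left ideal in the convolution algebra, the set $\{s_{\mu} : \mu \in I\}$ is the homomorphic image of a minimal left ideal, hence itself minimal among left ideals of the form $s_{-}(\text{left ideal})$. The strategy is to exploit the asymmetry revealed in the previous results: extensions-by-definition are \emph{prob. independent over every type} (the proposition following the prob.\ independence definition), so their restrictions behave like ordinary convolutions, whereas a genuine minimal left ideal in $S_{x}^{\fs}(\mathbfcal{C},G^{\Omega})$ can contain types built from non-constant random variables that are \emph{not} prob.\ independent, and these produce strictly smaller left ideals.

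Concretely, I would proceed as follows. First, fix $\mu \in I$ and pick any type $p \in S_{x}^{\fs}(\mathbfcal{C},G^{\Omega})$ coming from a non-constant random variable $h \in (G_0^{\Omega})^{x}$ whose partition $\mathcal{A}$ is nontrivial — for instance, $p = \tp(h/\mathbfcal{C})$ where $h$ takes two distinct group-element values on sets of measure $1/2$. If $\{s_{\mu} : \mu \in I\}$ were a left ideal, then $p * s_{\mu}$ would equal $s_{\nu}$ for some $\nu \in I$. The key is to restrict this putative equality back down via $\nu_{-}$ and reach a contradiction. By Proposition~\ref{prop:basic0}, $\nu_{s_{\nu}} = \nu$, so $\nu_{p * s_{\mu}} = \nu \in \mathfrak{M}_{x}^{\fs}(\mathcal{G},G)$; in particular $\nu_{p * s_{\mu}}$ must be an ordinary Keisler measure that lies in the minimal left ideal $I$. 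I would then compute $\nu_{p * s_{\mu}}$ directly in terms of the Newelski product and the restriction map, using the formula $\mathbb{E}[\varphi(x,b)]^{p * s_{\mu}} = \mathbb{E}[\varphi(x \cdot y, b)]^{p \otimes s_{\mu}}$ together with the fact that $s_{\mu}$ is prob.\ independent over $p$ (so that, in the good direction, $\nu_{p \otimes s_{\mu}} = \nu_{p} \otimes \nu_{s_{\mu}} = \nu_{p} \otimes \mu$ by Proposition~\ref{prop:ind} and Proposition~\ref{prop:basic0}). This shows $\nu_{p * s_{\mu}} = \nu_{p} * \mu$, where $\nu_{p}$ is an ordinary Keisler measure which is \emph{not} a Dirac-type point mass (since $h$ is non-constant).

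The contradiction should now emerge from the left-invariance structure of the minimal left ideal. Since $G$ is not definably amenable, no measure in $\mathfrak{M}_{x}^{\fs}(\mathcal{G},G)$ is left-$G$-invariant, but the structure of minimal left ideals still imposes rigid constraints: I would argue that $\nu_{p} * \mu$ cannot lie in $I$ for a suitable choice of $p$, because the operation $\nu_{p} * -$ can move $\mu$ outside $I$, or alternatively because $s_{\nu_{p} * \mu} \neq p * s_{\mu}$ — the latter inequality being exactly the failure of commutativity of the Morley product with the restriction map, which holds precisely when $p$ is \emph{not} prob.\ independent over $s_{\mu}$. The crucial point is the asymmetry of prob.\ independence: $s_{\mu}$ is prob.\ independent over $p$, but $p$ need not be prob.\ independent over $s_{\mu}$, and for a non-constant $h$ this failure is witnessed by a formula separating the values $h|_{A}$. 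Applying the main characterization theorem of Section~3 (equivalence of prob.\ independence and $\nu_{p \otimes q} = \nu_{p} \otimes \nu_{q}$) in the direction where it fails should yield $s_{\nu_{p} * \mu} \neq p * s_{\mu}$, contradicting that the left side is $s_{\nu}$ with $\nu = \nu_{p} * \mu$.

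\textbf{Main obstacle.} The hard part will be pinning down the precise choice of $p$ (equivalently, of the non-constant random variable $h$) for which one can simultaneously guarantee (i) that $\nu_{p} * \mu$ still lands in $\mathfrak{M}_{x}^{\fs}(\mathcal{G},G)$ in a controlled way, and (ii) that $p$ fails to be prob.\ independent over $s_{\mu}$ in a manner detectable by the restriction map. The non-definable-amenability of $G$ must be used to ensure that such a witnessing configuration genuinely exists — this is where the hypothesis is essential, since in the definably amenable case the previous proposition shows the minimal left ideal collapses to a single $s_{\mu}$ and no such $h$ can disrupt it. I expect the cleanest route is to contrast the two computations $\nu_{p * s_{\mu}}$ and $\nu_{p} \otimes \mu$ evaluated on the diagonal-type formula $\varphi(x \cdot y)$, showing they disagree exactly because $h$ is non-constant, thereby forcing $p * s_{\mu} \notin \{s_{\nu} : \nu \in I\}$.
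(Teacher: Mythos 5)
Your reduction starts on the right track: assuming $\{s_{\mu} : \mu \in I\}$ is a left ideal, writing $p * s_{\mu} = s_{\nu}$ for some $\nu \in I$, and using $\nu_{s_{\nu}} = \nu$ (Proposition \ref{prop:basic0}) to pin down $\nu = \nu_{p * s_{\mu}} = \nu_{p} * \mu$. But both mechanisms you propose for the final contradiction fail, for structural reasons. First, ``$\nu_{p} * \mu$ cannot lie in $I$'' is hopeless: for your $h$, the measure $\nu_{p}$ is a convex combination of Dirac measures at elements of $G$, hence lies in $\mathfrak{M}_{x}^{\fs}(\mathcal{G},G)$, and $I$ is by definition a left ideal of the convolution semigroup, so $\nu_{p} * \mu \in I$ automatically. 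Second, your alternative route -- detecting $p * s_{\mu} \neq s_{\nu_{p} * \mu}$ through failure of prob.\ independence of $p$ over $s_{\mu}$ -- cannot work either, because for $p = \tp(h/\mathbfcal{C})$ with $h \in G_{0}^{\Omega}$ simple, $p$ \emph{is} prob.\ independent over $s_{\mu}$: a direct computation in the style of Fact \ref{Fact:easy-1} shows $\nu_{p \otimes s_{\mu}} = \nu_{p} \otimes \mu$, so the characterization theorem gives you nothing. (Incidentally, the justification you cite is backwards: Proposition \ref{prop:ind} requires the \emph{left} factor of the product to be prob.\ independent over the right one, so ``$s_{\mu}$ prob.\ independent over $p$'' controls $s_{\mu} \otimes p$, not $p \otimes s_{\mu}$.) The conceptual point your proposal misses is that $p * s_{\mu}$ and $s_{\nu_{p} * \mu}$ have the \emph{same} image under $\nu_{-}$; you conflate ``the two types differ'' with ``their restrictions differ.'' Their difference is invisible to parameters from $\mathcal{U}$ and can only be witnessed by parameters that are themselves non-constant random elements correlated with $h$.

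This is why the paper's proof works upstairs in $\mathbfcal{C}$ rather than through the restriction map, and it needs two ingredients absent from your outline. First, it fixes an \emph{extreme} point $\mu$ of the compact convex set $I$ (Krein--Milman); non-definable-amenability gives $g \in G$ with $\mu_{g} \neq \mu$, and extremality then yields a formula $\theta(x,b)$ with $\mu(\theta(x,b)) \neq \frac{1}{2}\mu_{g^{-1}}(\theta(x,b)) + \frac{1}{2}\mu_{g}(\theta(x,b))$ -- this is the concrete use of the hypothesis that your ``main obstacle'' paragraph leaves open. Second, taking $h$ equal to $g$ on $A$ and to the identity on $A^{c}$, it runs an automorphism argument: the automorphism $\sigma$ induced by swapping $A \leftrightarrow A^{c}$ (fixing all constant random elements) fixes \emph{every} type of the form $s_{\lambda}$, since $(\mathbb{E}[\psi(x,b)])^{s_{\lambda}}$ depends on $b$ only through $\nu_{\tp(b/\mathbfcal{C})}|_{M}$, which is $\sigma$-invariant; but $\tp(h/\mathbfcal{C}) * s_{\mu}$ is not $\sigma$-invariant, as evaluating it at the parameter $f$ (equal to the identity on $A$ and $g^{-1}$ on $A^{c}$) gives $\frac{1}{2}\mu(\theta(g \cdot x,b)) + \frac{1}{2}\mu(\theta(g^{-1} \cdot x,b))$, while evaluating at $\sigma(f)$ gives $\mu(\theta(x,b))$, and these differ by the choice of $\theta$. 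Hence $\tp(h/\mathbfcal{C}) * s_{\mu}$ equals no $s_{\lambda}$ whatsoever. Without the extreme-point step and without some device (an automorphism, or equivalently a random parameter) that sees beyond $\nu_{-}$, your outline cannot be completed.
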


\begin{proof} We recall from \cite{chernikov2023definable} that $I$ is a compact convex set. Fix $\mu \in I$ such that $\mu$ is an extreme point, i.e. for any $\lambda_1,\lambda_2 \in I$ and $t \in (0,1)$, 
\begin{equation*}
    \mu \neq t\lambda_1 + (1-t)\lambda_2. 
\end{equation*}
We recall that extreme points exist by the Krein-Milman theorem. Since $G$ is not definably amenable, there exists $g \in G$ such that $\mu_{g} \neq \mu$. Now notice that 
\begin{equation*}
    \mu \neq \frac{1}{2}\mu_{g^{-1}} + \frac{1}{2} \mu_{g}, 
\end{equation*}
since $\mu$ is extreme. Thus there exists an $\mathcal{L}$-formula $\theta(x,y)$ and some $b \in \mathcal{U}$ such that 
\begin{equation*}
    \mu(\theta(x,b)) \neq \left( \frac{1}{2}\mu_{g^{-1}} + \frac{1}{2}\mu_{g} \right)(\theta(x,b)). 
\end{equation*}
Now fix $A \subseteq \Omega_0$ such that $\mathbb{P}_0(A) = 1/2$ and consider the random element
\begin{equation*}
h(t)=\begin{cases}
\begin{array}{cc}
g & t\in A,\\
e & t\in A^{c}.
\end{array}\end{cases}
\end{equation*} 
Now suppose that $\{s_{\mu} : \mu \in I\}$ is a minimal left ideal. Then there exists some $\lambda \in I$ such that $s_{\lambda} = \tp(h/\mathbfcal{C}) * \mu$. But this is impossible. The simplest way to see this is via an automorphism argument. Consider an automorphism of $\mathcal{B}_0$ which sends $A \to A^{c}$. We can extend this automorphism to an automorphism of the randomization which fixes all constant random variables from $\mathcal{G}^{\Omega}$. We note that the type $s_{\lambda}$ is fixed under this automorphism while $\tp(h/\mathbfcal{C}) * s_{\mu}$ is not. Indeed, we first see that $s_{\lambda}$ is invariant. Fix a parameter $b\in \mathbfcal{C}^{y}$. Notice that $\nu_{\tp(b/\mathcal{C})}|_{M} = \nu_{\tp(\sigma(b)/\mathcal{C})}|_{M}$ because for any $\mathcal{L}$-formula $\theta(y,z)$ and parameter $d \in M^{z}$, 
\begin{align*}
    \nu_{\tp(b/\mathbfcal{C})}(\theta(x,d)) = (\mathbb{E}[\theta(b,f_d)]) &= (\mathbb{E}[\theta(\sigma(b),\sigma(f_d))]) \\ &= (\mathbb{E}[\theta(\sigma(b),f_d]) = \nu_{\tp(\sigma(b)/\mathbfcal{C})}(\theta(x,d)). 
\end{align*}
Now for any $\mathcal{L}$-formula $\psi(x,y)$, 
\begin{align*}
    (\mathbb{E}[\psi(x,b)])^{s_{\lambda}} = \int_{S_{x}(M)} F_{\lambda}^{\psi} d\nu_{\tp(b/\mathcal{C})} = \int_{S_{x}(M)} F_{\lambda}^{\psi} d\nu_{\tp(\sigma(b)/\mathcal{C})} = (\mathbb{E}[\psi(x,\sigma(b))])^{s_{\lambda}},
\end{align*}
since the integration is (formally) occurring with respect to the measures $\nu_{\tp(b/\mathcal{C})}|_{M}$ and $\nu_{\tp(\sigma(b)/\mathcal{C})}|_{M}$. Hence $s_{\lambda}$ is fixed under this automorphism. 

We now argue that $\tp(h/\mathbfcal{C}) * s_{\mu}$ is not invariant. Consider the elements
\begin{equation*}
f(t)=\begin{cases}
\begin{array}{cc}
e & t\in A,\\
g_1^{-1} & t\in A^{c}.
\end{array}\end{cases} 
\text{and }
k(t) =(f\cdot h) (t)=\begin{cases}
\begin{array}{cc}
g & t\in A,\\
g_1^{-1} & t\in A^{c}.
\end{array}\end{cases} 
\end{equation*} 
Then if $\varphi^{+}(x;yz) = \varphi(y \cdot x,z)$, 
\begin{align*}
    (\mathbb{E}[\theta(f \cdot x,f_{b}))^{\tp(h/\mathbfcal{C}) * s_{\mu}} 
    &= [\mathbb{E}[\varphi(f \cdot h \cdot x,f_{b})]^{s_{\mu}}   \\
    &= (\mathbb{E}[\varphi(k \cdot x,f_{b}))^{s_{\mu}}   \\
    &=  \int F_{\mu}^{\varphi^{+}} d\nu_{\tp(k,f_{b}/\mathbfcal{C})}\\ 
    &= \frac{1}{2} \mu(\theta(g \cdot x,b)) + \frac{1}{2} \mu(\theta(g^{-1} \cdot x, b))\\
    &\neq \mu(\theta(x,b)) \\
    &=(\mathbb{E}[\theta( x,f_{b}))^{ s_{\mu}}   \\
    &=(\mathbb{E}[\theta(f_{e} \cdot x,f_{b}))^{ s_{\mu}}   \\
    &=(\mathbb{E}[\theta(\sigma(f) \cdot h\cdot x,f_{b}))^{ s_{\mu}}   \\
    &=(\mathbb{E}[\theta(\sigma(f) \cdot x,f_{b}))^{\tp(h/\mathbfcal{C}) * s_{\mu}}  \qedhere 
\end{align*}
\end{proof}

\bibliographystyle{plain}
\bibliography{refs}

\end{document}